\newtheorem{theorem}{Theorem}[section]
\newtheorem{corollary}{Corollary}
\newtheorem{lemma}{Lemma}
\newtheorem{proposition}{Proposition}
\newtheorem{definition}{Definition}
\newtheorem{remark}{Remark}
\newtheorem{problem}{Problem}
\newtheorem{assumption}{Assumption}
\numberwithin{equation}{section}
\numberwithin{lemma}{section}
\numberwithin{corollary}{section}
\numberwithin{theorem}{section}
\numberwithin{proposition}{section}
\numberwithin{problem}{section}
\numberwithin{remark}{section}
\numberwithin{definition}{section}
\numberwithin{assumption}{section}
\DeclareMathOperator{\Ker}{Ker}
\newcommand{\R}{\mathbb{R}}
\title{Second order splitting of a class of fourth order PDEs with point constraints}
\author{Charles M. Elliott}
\address{Mathematics Institute, Zeeman Building, University of Warwick, Coventry. CV4 7AL. UK}
\email{C.M.Elliott@warwick.ac.uk}
\thanks{The work of CME was partially supported by the Royal Society via a Wolfson Research Merit Award.}
\author{Philip J. Herbert}
\address{Mathematics Institute, Zeeman Building, University of Warwick, Coventry. CV4 7AL. UK}
\email{P.J.Herbert@warwick.ac.uk}
\thanks{The research of PJH was funded by the Engineering and Physical Sciences Research Council grant  EP/H023364/1
 under the MASDOC centre for doctoral training at the University of Warwick.}
\subjclass[2010]{Primary 65N30, 65J10, 35J35}
\date{}
\begin{document}

\begin{abstract}
{We formulate a well-posedness and approximation theory for a class of generalised saddle point problems with a specific form of constraints.  In this way we develop an approach to a class of fourth order elliptic partial differential equations with point constraints  using  the idea of splitting into coupled second order equations. An approach is formulated using a penalty method to impose the constraints. 
Our main motivation is to treat certain fourth order equations involving the biharmonic operator and point Dirichlet constraints for example  arising in the modelling of biomembranes on curved and flat surfaces  but the approach may be applied more generally. The theory for well-posedness and approximation is presented 
 in an abstract setting. Several examples are described together with some numerical experiments.}
\end{abstract}
\maketitle

\section{Introduction}%\marginpar{put in more equation numbers and refer to them (e.g. the third term of eqref)}
We study the well-posedness and approximation of a saddle point problem posed  in reflexive Banach spaces with a constraint in a Hilbert space.
Let $X,\,Y$ be reflexive Banach spaces, $X_0\subset X$ be a linear subspace and $S$ a Hilbert space with $T\colon X \to S$ being a given linear map.
% and  $L$ be a Hilbert space such that   $Y\subset L $ is a continuous embedding.
%We suppose that  $c \colon X \times X \to \R,~
%	b \colon X \times Y \to \R$ and 
%	$m\colon L \times L \to \R$  are  bounded bilinear forms and that
%	$T\colon X \to S$ is linear where $S$ is a Hilbert space.
	The problem we are interested in is:-
	
	%\newline
Given $(f,g,s)\in X^*\times Y^*\times S$ find $(u,w) \in X\times Y$ such that
\begin{equation}\label{eq:FirstStatement}
	\begin{split}
		c(u,\eta) + b(\eta,w) &= \langle f,\eta \rangle \quad \forall \eta \in X_0,
\\
		b(u,\xi) - m(w,\xi) &= \langle g, \xi \rangle \quad \forall \xi \in Y,
\\
		(Tu,z)_S &= (s,z)_S \quad \forall z \in S,
	\end{split}
\end{equation}
where $c(\cdot,\cdot),\, b(\cdot,\cdot)$ and $m(\cdot,\cdot)$ are bilinear forms and   precise assumptions  will be given in Section \ref{sectionAbstractForm}.
%Here we only consider constraints on $u$ and in an appropriate Hilbert space.% $(Tu,z)_S = (s,z)_S~\forall z \in S$.

We approximate \eqref{eq:FirstStatement} by penalising the condition $(Tu-s,z)_S=0$, rather than imposing it.
This results in the problem:-
%\begin{problem}
%\label{penaltyProblem}

	%\newline
	Given $(f,g,s)\in X^*\times Y^*\times S$ and $\epsilon >0$,  find $(u^\epsilon,w^\epsilon) \in X \times Y$ such that
	\begin{align*}
	c(u^\epsilon,\eta) + \frac{1}{\epsilon}(T u^\epsilon,T \eta)_S
	+
	b(\eta, w^\epsilon)
	&=
	\langle f,\eta \rangle + \frac{1}{\epsilon}(s,T\eta)_S ~ &\forall \eta \in X,
	\\
	b(u^\epsilon,\xi) - m(w^\epsilon,\xi) &= \langle g,\xi\rangle ~ &\forall \xi \in Y.
	\end{align*}
%\end{problem}
%  to Problem \ref{AbstractSP}.

%\marginpar{generalise the operator on $S$? This is definitely open for further study}
Our abstract formulation is  motivated by applications of this theory to fourth order boundary value  
problems arising in the modelling of biomembranes posed on a flat domain, sphere or torus, with a specific  
example focusing on the sphere for ease of exposition. The  problems  are derived in 
\cite{ EllGraHob16,EllFriHob17, GraKie18, EllHatHer19} as approximations of minimisers of the 
Helfrich energy \cite{Hel73} with point constraints.
In this context these arise as Dirichlet constraints on the  membrane deformation modelling the 
attachment of point particles  to  the membrane at fixed locations.  %One might wish to consider Dirichlet  constraints along a closed curve \cite{EllGraHob16}. 
In the work of \cite{ButNaz11}, the authors consider an optimisation problem associated with 
bilaplace equation  with point Dirichlet conditions on a flat domain, $\Omega$. % whose solution lies in the \emph{Kondrat'ev space} $V^4_0(\Omega;\bigcup_{i=1}^N \{X_i\})$ which is the completion of $C^\infty_0 ( \Omega \setminus\bigcup_{i=1}^N \{X_i\})$ with a weighted norm, where the weight vanishes on $\bigcup_{i=1}^N \{X_i\}$.

We have in mind the following setting. Let $\Gamma$ be a curved or flat $C^2$ two 
dimensional hypersurface with or without a boundary,  for $\infty>q>2>p>1$, 
set $$X=\left\{\eta \in W^{1,q}(\Gamma) \,:\, \int_\Gamma\eta =0 \right\},~
	Y=\left\{\xi \in W^{1,p}(\Gamma) \,:\, \int_\Gamma\xi=0\right\},~
L=L^2(\Gamma)$$and $S = \mathbb R^N$,  with bilinear forms,
$c\colon W^{1,q}(\Gamma) \times W^{1,q}(\Gamma) \to \R,\,
	b\colon W^{1,q}(\Gamma) \times W^{1,p}(\Gamma) \to \R$ and 
	$m\colon L^{2}(\Gamma) \times L^{2}(\Gamma) \to \R$.
In particular we have in mind an example of the form
\begin{align*}
	c(u,\eta)=
	\int_\Gamma \left( c_0 \nabla_\Gamma u \cdot \nabla_\Gamma \eta +c_1 u\eta\right),
	~~b(\eta,\xi)=
	\int_\Gamma \nabla_\Gamma \eta \cdot \nabla_\Gamma \xi + \eta\xi,~~
	m(\eta,\xi) &= \int_\Gamma \eta\xi
\end{align*}
where $c_0,\,c_1$ are bounded  but $c(\cdot,\cdot)$ is not coercive and a linear map 
$T\colon W^{1,q}(\Gamma)\rightarrow \mathbb R^N$ defined by $(T\eta)_j:=\eta(X_j),\,j=1,...,N$ with $X_j\in \Gamma,\,j=1,...,N$.

%One will require $X$ and $Y$ to be different spaces so that it is possible to use delta functionals in the operator with our choice of $X$, where as for a standard fourth order problem, it would be more natural to choose $X=Y=H^1$ as considered in \cite[Corollary 7.1]{EllFriHob19}.

%A related  biomembrane model is to  consider the membrane being attached to linear springs at  given heights which may be interpreted as a penalty approximation to the Dirichlet constraints. 
%In this view, one may imagine the penalty parameter to be the inverse of a spring constant. 

\subsection{Background}The study of saddle point problems is well documented, \cite{BofBreFor13, ErnGue13}, with 
many applications, for example in  fluid mechanics, \cite{GirRav86}, or in  linear elasticity, \cite{ArnFalWin07}. 
Note that in many of the cases in  which  $m\neq0$, the authors require some strong assumptions on $c$, at least positive 
semi definite, see \cite{BofBreFor13,CiaHuaZou03,KelLiu96}. 
The system \eqref{eq:FirstStatement} is an extension  to that considered in \cite{EllFriHob19}. 
 The  extended system  is posed  over an affine subspace of $X\times Y$, rather than over the whole space. 
 If in \eqref{eq:FirstStatement}, we were to seach for a solution in $X_0 \times Y$, 
 the first equation were to be considered with test functions in $X_0$ and the third equation to be dropped, 
 this recovers the abstract system studied in \cite{EllFriHob19}. 
%, without the third constraint, although an example the authors give may be seen as an approximation of this system.
We will use the assumptions made in \cite{EllFriHob19} together with  an additional assumption to handle the constraint. 
% although it is an open question as to whether the assumption is necessary in general.
The assumptions will be given in Section \ref{sectionAbstractForm}. 
 
In \cite{CarKri82}, the authors consider the approximation of Stokes flow by penalising the incompressibility condition.
In particular, they show that the penalty terms approximate the pressure. We also consider 
an abstract problem with penalty and  show that, in our setting, the penalty terms converge to the 
Lagrange multiplier associated to the constraints.
% give estimates with an explicit dependence on the parameter and, under certain assumptions, show that the estimates are  independent of the parameter leading to convergence.  
Further to this, we show estimates between the solution to the problem with penalised constraint and the solution to the problem with enforced constraint.
 An abstract finite element theory with error bounds  is presented.  The results of this paper extend those of \cite{EllFriHob19} where for example, in \cite[Section 6 and 7]{EllFriHob19}  it is shown that the well posedness theory in that paper may be applied to a problem with penalised point constraints without consideration of the convergence with respect to the penalty parameter.

  The motivation for the abstract setting is to handle second order splitting for a class of fourth order surface PDEs with point Dirichlet constraints arising in the modelling of biomembranes, \cite{EllFriHob17}. The setting of \cite{EllFriHob19} may be directly applied to the penalty approximation for fixed penalty parameter but does not handle the hard constraint case. Here we show that the abstract setting is applicable and that the results apply to a surface finite element approximation using $H^1$ conforming surface finite elements. We also provide numerical experiments for this point constraint problem, considering both the grid refinements and refinements in penalty.  %.

%For the case of $m=0$, we would require $c$ positive-semi-definite, this is simply a saddle point problem with two 'constraints', a simple example would be Stokes flow.% posed over $\mathbb{T}^n$.

\subsection{Outline of paper}
In Section \ref{sectionAbstractForm} we define the abstract saddle point system with constraint and with penalty, 
consisting of the bilinear forms $c,\,b,\,m$ and the inner product on the space of constraints.
Well-posedness for the penalty problem trivially follows from the results of \cite{EllFriHob19}.
Well-posedness for the constrained problem requires additional conditions, which are natural to the standard saddle point formulation.
We then show that under a set of assumptions which guarentee the problems are well posed, one obtains strong 
convergence with error estimates depending on the penalty parameter in the natural spaces.
An abstract finite element method is then discussed in Section \ref{Sec:AbstractFE}.
The explicit setting for the finite element method we will choose to model the application is given in Section \ref{Sec:SurfaceApplication}.
%For the application, we consider point constraints for a biological membrane, similar to that in \cite{EllGraHob16,GraKie17}, which is detailed in \cite{Her17} and also as an example in \cite{EllFriHob19}, for this example, we show that one may obtain convergence in the norms natural to the variables.
We conclude in Section \ref{Sec:Experiments} with some experimental examples which verify the proved convergence rates both in terms of the grid size and penalty parameter.%showing optimality in some cases and sub-optimality in others of the theoretical convergence in terms of both the penalty parameter and the grid size.%\marginpar{mainly- non optimal}

\section{Abstract problem}\label{sectionAbstractForm}
\subsection{Setting and problem formulation} We first define the spaces and functionals used along with 
the required assumptions. Throughout   $X$, $Y$ are reflexive Banach spaces, $L$  is a Hilbert space 
with $Y\subset L$ continuously embedded and $S$ is a separable Hilbert space with inner product $(\cdot,\cdot)_S$.
\begin{definition}\label{definitions}

Define the following
\begin{align*}
	c&\colon X \times X \to \R, \text{ bounded and bilinear},
	\\
	b&\colon X \times Y \to \R, \text{ bounded and bilinear},
	\\
	m&\colon L \times L \to \R, \text{ bounded, bilinear, symmetric and coercive},
	\\
	T&\colon X \to S, \text{ bounded, surjective and linear}.
\end{align*}%\marginpar{should  the assumption of $T$ being surjective be here? Surely this is an assumption we make later}
Let $s\in S$, define,
\[
	X_s := \{ x \in X : Tx = s\}.
\]
\end{definition}
It is clear that $X_s$ is non-empty by surjectivity of $T$.
With this general setting in mind, we formulate 
 a Lagrange multiplier problem and associated approximating penalised problem that we wish to consider.
%\marginpar{Hanging sentences bad}
\begin{problem}\label{lagrangeProblem}
Given $f\in X^*$, $g \in Y^*$ and $s \in S$, find $(u,w,\lambda) \in X\times Y \times S$ such that
\begin{align*}
	c(u,\eta) + b(\eta,w) + (T\eta,\lambda)_S &= \langle f, \eta \rangle \, &\forall \eta \in X,
	\\
	b(u,\xi) - m(w, \xi ) &= \langle g, \xi \rangle \, &\forall \xi \in Y,
	\\
	(Tu,z)_S &= (s,z)_S \, &\forall z \in S.
\end{align*}
\end{problem}

\begin{problem}
\label{penaltyProblem}
	Given $f\in X^*$, $g \in Y^*$, $s \in S$ and $\epsilon >0$,
	find $(u^\epsilon,w^\epsilon) \in X \times Y$ such that
	\begin{align*}
	c(u^\epsilon,\eta) + \frac{1}{\epsilon}(T u^\epsilon,T \eta)_S
	+
	b(\eta, w^\epsilon)
	&=
	\langle f,\eta \rangle + \frac{1}{\epsilon}(s,T\eta)_S \, &\forall& \eta \in X,
	\\
	b(u^\epsilon,\xi) - m(w^\epsilon,\xi) &= \langle g,\xi\rangle \, &\forall& \xi \in Y.
	\end{align*}
\end{problem}

\begin{remark}\label{hardProblem}
Observe that Problem \ref{lagrangeProblem} is equivalent to:-
Given $f\in X^*$, $g \in Y^*$ and $s \in S$, find $(u,w) \in X_s \times Y$ such that
\begin{align*}
	c(u,\eta)+b(\eta, w)
	&=
	\langle f,\eta \rangle \, &\forall \eta \in X_0,
	\\
	b(u,\xi) - m(w,\xi) &= \langle g,\xi\rangle \, &\forall \xi \in Y.
	\end{align*}
\end{remark}

%One can verify that Problem \ref{hardProblem} is reasonable to consider as a limit of Problem \ref{penaltyProblem} by considering powers of $\epsilon$, in the $O(1/\epsilon)$ terms, one has $Tu = s$.
%For %$O(1)$, considering
%$\eta \in X_0$ it holds $T\eta = 0$, hence $c(u,\eta) + b(\eta,w) = \langle f,\eta\rangle$.

We note that the assumptions we will make for the well-posedness of these two  abstract problems  differ. The following assumption is required for both the Lagrange multiplier problem and the problem with penalty.

\begin{assumption}
\label{ass:AbstractCoercivity}
There is $C>0$ and $\epsilon_0>0$ such that for all $(u,w)\in X\times Y$
\begin{equation}\label{eq:AbstractCoercivity}
	b(u,\xi) = m(w,\xi) ~~ \forall \xi \in Y \implies C\|w\|^2_L \leq c(u,u) +\frac{1}{\epsilon_0}(Tu,Tu)_S + m(w,w).
\end{equation}
\end{assumption}

\subsection{Well posedness of Lagrange multiplier problem}
\begin{assumption}\label{kernelInfSup}
There is $\zeta >0$ such that for any $(u,w) \in X_0\times Y$, 
\begin{equation}	\label{eq:BigInfSup}
	\zeta(\|u\|_X + \|w\|_Y )
	\leq
	\sup_{(\eta,\xi) \in X_0\times Y}
	\frac{c(u,\eta) + b(\eta,w) + b(u,\xi) - m(w,\xi)}{\|\eta\|_X + \|\xi\|_Y}.
\end{equation}

\end{assumption}
The assumption in \eqref{eq:BigInfSup} arises naturally from the standard saddle point problem:-

Find $x\in X\times Y$ and $p \in S$ such that
\begin{align*}
	a(x,y) + d(y,p) &= \langle\tilde{f},y\rangle~~\forall y \in X\times Y
	\\
	d(x,z) &= \langle s, z \rangle ~~\forall z \in S,
\end{align*}
where $x = (x_1,x_2)$ with $x_1 \in X,\, x_2 \in Y$, $a(x,y):= c(x_1,y_1) + b(y_1,x_2) + b(x_1,y_2) - m(x_2,y_2)$, $\langle\tilde{f},y \rangle = \langle f, y_1 \rangle + \langle g,y_2\rangle$ and $d(x,z) := (Tx_1, z)_S$, which is an equivalent formulation of Problem \ref{lagrangeProblem}.
%We need not make any further assumption on $T$ in terms of an inf-sup condition, as the assumption we would make is equivalent to the already assumed surjectivity of $T$.
%This statement is observed from the following proposition \cite{ErnGue13}.
%\begin{proposition}\marginpar{Correct version hidden}
%\label{Propinf-supSurjective}
%	Let $U$ and $V$ be Banach spaces.
%	Then $Q\in \mathcal{B}(U,V)$ is surjective if and only if there is $C>0$ such that $\|Q^* p\| \geq C \|p\|$.
%\end{proposition}

\begin{theorem}\label{ThmLagrangeProblem}
	Given Assumptions \ref{ass:AbstractCoercivity} and \ref{kernelInfSup} hold, there a is unique solution to Problem \ref{lagrangeProblem}.
	%s \ref{hardProblem} and
	Furthermore, it holds that there is $C>0$ such that,
\[
	\|u\|_X + \|w\|_Y + \| \lambda\|_S \leq C(\|f\|_{X^*} + \|g\|_{Y^*} + \| s \|_S).
\]
\end{theorem}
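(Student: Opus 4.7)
The plan is to reduce Problem \ref{lagrangeProblem} to the unconstrained saddle point system analysed in \cite{EllFriHob19}, and then to construct the Lagrange multiplier $\lambda$ separately using the surjectivity of $T$. Since $T\colon X\to S$ is bounded and surjective, the open mapping theorem furnishes $u_s\in X_s$ with $\|u_s\|_X \leq C\|s\|_S$. Writing any admissible $u$ as $u = u_0 + u_s$ with $u_0\in X_0$, the constraint equation is automatically satisfied, and in view of Remark \ref{hardProblem} the first two equations of Problem \ref{lagrangeProblem} become
\begin{align*}
c(u_0,\eta) + b(\eta,w) &= \langle f,\eta\rangle - c(u_s,\eta) &&\forall\eta\in X_0,\\
b(u_0,\xi) - m(w,\xi) &= \langle g,\xi\rangle - b(u_s,\xi) &&\forall\xi\in Y.
\end{align*}
This is precisely the unconstrained system on $X_0\times Y$ treated in \cite{EllFriHob19}, for which Assumption \ref{kernelInfSup} supplies the kernel inf-sup and Assumption \ref{ass:AbstractCoercivity} (noting that $Tu_0=0$) provides the additional structure needed to control $\|w\|_L$. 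Applying that theory yields a unique $(u_0,w)\in X_0\times Y$ together with a stability bound, and setting $u := u_0 + u_s$ gives the first two components of the solution with $\|u\|_X + \|w\|_Y \leq C(\|f\|_{X^*} + \|g\|_{Y^*} + \|s\|_S)$.

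The next step is to produce $\lambda\in S$ satisfying the first equation of Problem \ref{lagrangeProblem} on all of $X$. I consider the functional $\Phi\in X^*$ defined by $\Phi(\eta) := \langle f,\eta\rangle - c(u,\eta) - b(\eta,w)$; by the first equation above, $\Phi$ vanishes on $X_0=\Ker T$. Since $T$ is bounded and surjective with $S$ a Hilbert space, the closed range theorem gives that $T^*\colon S\to X^*$ is injective with closed range equal to the annihilator of $\Ker T$. Thus there exists a unique $\lambda\in S$ such that $(T\eta,\lambda)_S = \Phi(\eta)$ for all $\eta\in X$, which is exactly the first equation of Problem \ref{lagrangeProblem}. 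Applying the open mapping theorem to $T^*$ onto its range, combined with the boundedness of $c$ and $b$, yields $\|\lambda\|_S \leq C(\|f\|_{X^*} + \|u\|_X + \|w\|_Y)$, and substituting the bound from the previous paragraph completes the stability estimate.

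Uniqueness reduces to a homogeneous argument: any solution of Problem \ref{lagrangeProblem} with $f=0$, $g=0$, $s=0$ has $u\in X_0$ by the constraint equation, and testing the first two equations with $\eta\in X_0$ and $\xi\in Y$ returns the reduced kernel system whose only solution is zero by Assumption \ref{kernelInfSup}; the first equation then forces $(T\eta,\lambda)_S=0$ for all $\eta\in X$, and surjectivity of $T$ gives $\lambda=0$. The main obstacle in the strategy is ensuring that the hypotheses of \cite{EllFriHob19} are precisely those recorded in Assumptions \ref{ass:AbstractCoercivity} and \ref{kernelInfSup}; in particular one must check that the kernel inf-sup on $X_0\times Y$ in Assumption \ref{kernelInfSup} matches what is required there, and that the coercivity hypothesis remains valid after the substitution $u = u_0 + u_s$, where the extra terms $c(u_s,\cdot)$ and $b(u_s,\cdot)$ are absorbed into the right-hand side and bounded using the surjectivity estimate for $T$.
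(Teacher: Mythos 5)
Your argument is correct in outline and is, in essence, the constructive proof of the abstract saddle point theorem that the paper merely cites. The paper's route is to rewrite Problem \ref{lagrangeProblem} as the generalised saddle point system with $a(x,y)=c(x_1,y_1)+b(y_1,x_2)+b(x_1,y_2)-m(x_2,y_2)$ and $d(x,z)=(Tx_1,z)_S$ (as displayed after Assumption \ref{kernelInfSup}) and to invoke the standard theorem from \cite{ErnGue13}: Assumption \ref{kernelInfSup} is the kernel inf-sup for $a$, and surjectivity of $T$ is the inf-sup for $d$. You instead unpack that theorem by hand: the open mapping theorem to lift the constraint via $u=u_0+u_s$ with $\|u_s\|_X\leq C\|s\|_S$, solution of the reduced problem on $X_0\times Y$, and recovery of $\lambda$ from the closed range theorem applied to $T^*$ (whose range is the annihilator of $X_0=\Ker T$). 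Your version buys an explicit construction of $\lambda$ and a transparent provenance for each term in the stability bound; the paper's version buys brevity and keeps the hypotheses exactly aligned with the cited theorem.

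Two caveats, which you partly anticipate. First, your treatment of the reduced kernel problem appeals to the well-posedness theory of \cite{EllFriHob19}, but that theory is driven by the inf-sup conditions for $b$ and the Galerkin spaces (Assumptions \ref{InfSupSOSCoercivity} and \ref{ass:discreteInfSupB}), which are not hypotheses of this theorem and whose restrictions to $X_0$ would in any case need re-verification; the route consistent with the stated hypotheses is to treat the reduced square system directly by the Banach--Ne\v{c}as--Babu\v{s}ka theorem, for which Assumption \ref{kernelInfSup} is precisely the lower bound on $X_0\times Y$. Second, for existence (as opposed to uniqueness) one also needs the adjoint non-degeneracy of $a$ on the kernel; neither you nor the paper establishes this explicitly, although Assumption \ref{ass:AbstractCoercivity} yields it for the $Y$-component. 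Since the paper's own one-line proof shares these omissions, they do not put your argument at a disadvantage relative to the intended one.
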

\begin{proof}
The existence and uniqueness of Problem \ref{lagrangeProblem} is a simple consequence of Assumption \ref{kernelInfSup} and surjectivity of $T$, using a standard theorem on saddle point problems, see \cite{ErnGue13} for example.
\end{proof}

\subsection{Well posedness of penalty approximation}
We will require the following assumptions on $b$, $X$ and $Y$, as in \cite{EllFriHob17}.
%We make the following assumptions on $b$, $c$ and $m$, as in \cite{EllFriHob19}.
\begin{assumption}\label{InfSupSOSCoercivity}%\hspace{0.01cm}%%%%%%% don't know if too much or too little
There exist $\gamma,\beta>0$ such that
\begin{equation} \label{eq:AbstractInf-Sup}
	\beta \|\eta\|_X \leq \sup_{\xi \in Y} \frac{b(\eta,\xi)}{\|\xi\|_Y}~\forall \eta \in X
	~\mbox{and}~
	\gamma \|\xi\|_Y \leq \sup_{\eta \in X} \frac{b(\eta,\xi)}{\|\eta\|_X}~\forall \xi \in Y.
\end{equation}
\end{assumption}
In addition to the above, we also require there to be sufficiently well behaved approximating spaces. This allows for a Galerkin approximation.
We will see that we may pick finite element spaces satisfying the conditions.
%In general one might reduce the assumption \eqref{eq:AbstractCoercivity} to be that
%\[
%	C\|w\|_L^2 \leq c(u,u) + \frac{1}{\epsilon_0}(T u, T u)_S + m(w,w)
%\]
%for some $\epsilon_0>0$.
\begin{assumption}\label{ass:discreteInfSupB}
There are finite dimensional approximating spaces $X_n \subset X$ and $Y_n \subset Y$, that is, $\forall (\eta,\xi) \in X\times Y$ there are $(\eta_n,\xi_n) \in X_n\times Y_n$ with $\|\eta-\eta_n\|_X + \|\xi-\xi_n\|_Y \to 0$.
We additionally assume discrete inf-sup conditions.
That is there are $\tilde{\beta},\tilde{\gamma}>0$, independent of $n$, such that
\begin{equation}\label{eq:DiscreteAbstractInf-Sup}
	\tilde\beta \|\eta_n\|_X \leq \sup_{\xi_n \in Y_n} \frac{b(\eta_n,\xi_n)}{\|\xi_n\|_Y}~\forall \eta_n \in X_n
	~\mbox{and}~
	\tilde\gamma \|\xi_n\|_Y \leq \sup_{\eta_n \in X_n} \frac{b(\eta_n,\xi_n)}{\|\eta_n\|_X}~ \forall \xi_n \in Y_n.
\end{equation}
We also assume that there is an interpolation map $I_n\colon Y \to Y_n$ for each $n$ such that
\[
	b(\eta_n,I_n\xi) = b(\eta_n,\xi)~ \forall(\eta_n,\xi) \in X_n \times Y,
\]
\[
	\sup_{\xi \in Y} \frac{\| \xi - I_n \xi\|_L}{\|\xi\|_Y} \to 0 ~\mbox{as}~ n \to \infty.
\]
\end{assumption}
We now quote two results which will be useful to refer to throughout the work, they may be found in \cite[Lemma 2.1 and 2.2]{EllFriHob19}.
\begin{lemma}\label{lemma:discreteInverse}
	Let Assumptions \ref{ass:AbstractCoercivity} and \ref{ass:discreteInfSupB}  hold.
	There is a linear map $G_n \colon Y^* \to X_n$ such that for any $\theta \in Y^*$
\[
	b(G_n \theta, \xi_n) = \langle \theta, \xi_h\rangle ~~ \forall \xi_n \in Y_n.
\]
\end{lemma}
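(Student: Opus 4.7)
The plan is to realise $G_n$ as the inverse of the finite-dimensional operator
$B_n\colon X_n \to Y_n^*$ defined by $\langle B_n\eta_n,\xi_n\rangle := b(\eta_n,\xi_n)$, and then extend by restriction to handle arbitrary $\theta \in Y^*$.

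First I would use the two discrete inf-sup conditions in \eqref{eq:DiscreteAbstractInf-Sup} to pin down the dimensions of $X_n$ and $Y_n$. The first inequality says that if $\eta_n\in X_n$ satisfies $b(\eta_n,\xi_n)=0$ for every $\xi_n\in Y_n$, then $\|\eta_n\|_X=0$, so $B_n$ is injective; this forces $\dim X_n \le \dim Y_n$. The second inequality applied to the transpose map $\xi_n \mapsto b(\cdot,\xi_n)\in X_n^*$ is also injective, giving $\dim Y_n \le \dim X_n$. Hence $\dim X_n = \dim Y_n$ and $B_n$ is a bijection between finite-dimensional spaces of equal dimension.

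Now, given $\theta \in Y^*$, the restriction $\theta|_{Y_n}$ is an element of $Y_n^*$ (since $Y_n \subset Y$), and I would define
\[
    G_n\theta := B_n^{-1}\bigl(\theta|_{Y_n}\bigr) \in X_n.
\]
By construction, $b(G_n\theta,\xi_n) = \langle B_n(G_n\theta),\xi_n\rangle = \langle \theta,\xi_n\rangle$ for every $\xi_n \in Y_n$, which is the required identity. Linearity of $G_n$ follows because restriction to $Y_n$ is linear and $B_n^{-1}$ is linear.

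The whole argument is routine once the dimension equality is established; the only mildly delicate point is observing that the two sided discrete inf-sup condition is the right ingredient to conclude $\dim X_n = \dim Y_n$, after which the finite-dimensional linear algebra takes over. Note that the same argument in fact gives the quantitative bound $\|G_n\theta\|_X \le \tilde\beta^{-1}\|\theta\|_{Y^*}$ via the first inequality in \eqref{eq:DiscreteAbstractInf-Sup}, which will be useful later even though it is not part of the statement.
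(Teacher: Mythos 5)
Your argument is correct: the two discrete inf-sup conditions give injectivity of $B_n$ and of its transpose, hence $\dim X_n=\dim Y_n$ and bijectivity, and composing $B_n^{-1}$ with restriction of $\theta$ to $Y_n$ yields the required $G_n$ together with the bound $\|G_n\theta\|_X\le\tilde\beta^{-1}\|\theta\|_{Y^*}$. The paper does not prove this lemma itself but quotes it from \cite{EllFriHob19}, and your construction is the standard one used there, so there is nothing to add.
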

\begin{lemma}\label{lemma:discreteCoercivity}
Let Assumptions \ref{ass:AbstractCoercivity} and \ref{ass:discreteInfSupB}  hold. There is $C,N>0$ such that for all $n\geq N$ and any $v_n \in Y_n$,
\begin{equation}\label{eq:DiscreteAbstractCoercivity}
	C\|v_n\|_L^2 \leq c(G_n(m(v_n,\cdot)),G_n(m(v_n,\cdot))) + \frac{1}{\epsilon_0}(T(G_n(m(v_n,\cdot))),T(G_n(m(v_n,\cdot))))_S + m(v_n,v_n).
\end{equation}
\end{lemma}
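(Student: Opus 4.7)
The plan is to derive the discrete coercivity by comparing $u_n := G_n(m(v_n,\cdot))$ with its continuous counterpart and applying Assumption~\ref{ass:AbstractCoercivity}. By quadratic homogeneity in $v_n$, it suffices to treat $v_n \in Y_n$ with $\|v_n\|_L = 1$, for which $\|u_n\|_X \leq C$ follows from the discrete inf-sup used to construct $G_n$ in Lemma~\ref{lemma:discreteInverse}.

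I would introduce $\hat u_n \in X$ as the unique continuous solution to $b(\hat u_n, \xi) = m(v_n, \xi)$ for all $\xi \in Y$, which exists because Assumption~\ref{ass:discreteInfSupB} (discrete inf-sup together with the Fortin operator $I_n$ and density of $\bigcup_n Y_n$) transmits an inf-sup to the continuous level; similarly $\|\hat u_n\|_X \leq C$. Applying Assumption~\ref{ass:AbstractCoercivity} to the pair $(\hat u_n, v_n)$ gives
\[
	C_\ast \leq c(\hat u_n, \hat u_n) + \tfrac{1}{\epsilon_0}(T\hat u_n, T\hat u_n)_S + m(v_n, v_n).
\]
Polarising the quadratic terms and using boundedness of $c$, $T$, $\|\hat u_n\|_X$ and $\|u_n\|_X$ yields
\[
	\bigl|c(\hat u_n, \hat u_n) - c(u_n, u_n)\bigr| + \tfrac{1}{\epsilon_0}\bigl|(T\hat u_n, T\hat u_n)_S - (Tu_n, Tu_n)_S\bigr| \leq C'\|\hat u_n - u_n\|_X,
\]
so the lemma reduces to proving $\sup_{v_n \in Y_n,\,\|v_n\|_L = 1}\|\hat u_n - u_n\|_X \to 0$ as $n \to \infty$. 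Since $b(\hat u_n - u_n, \xi_n) = 0$ for every $\xi_n \in Y_n$, the discrete inf-sup yields the Cea-type estimate $\|\hat u_n - u_n\|_X \leq C\inf_{\eta_n \in X_n}\|\hat u_n - \eta_n\|_X$.

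The main obstacle is promoting this pointwise Cea bound to a uniform one. The argument I would use is that the collection $\{\hat u_n : \|v_n\|_L \leq 1\}$ is the image of the $L$-unit ball under the composition $L \to Y^* \xrightarrow{B^{-1}} X$; the first factor is compact in the applications (being dual to the Rellich-type embedding $Y \hookrightarrow L$), so the collection is precompact in $X$, and density of $\bigcup_n X_n$ upgrades pointwise approximation to uniform approximation on precompact sets. With this uniformity in hand, the perturbation contribution is absorbed into half the coercivity constant for $n \geq N$, yielding the lemma. An equivalent route is by contradiction: normalising $\|v_{n_k}\|_L = 1$ with the putative sum tending to zero, extracting weak limits of $u_{n_k}$ and $v_{n_k}$, identifying the limiting equation via the Fortin property of $I_{n_k}$, and contradicting Assumption~\ref{ass:AbstractCoercivity} at the limit; the same compactness ingredient reappears when turning weak into strong convergence in $L$.
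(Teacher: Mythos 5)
Your overall skeleton is the right one: compare $u_n:=G_n(m(v_n,\cdot))$ with the continuous solution $\hat u_n$ of $b(\hat u_n,\xi)=m(v_n,\xi)$ for all $\xi\in Y$, apply Assumption \ref{ass:AbstractCoercivity} to the pair $(\hat u_n,v_n)$, and absorb the perturbation $\|\hat u_n-u_n\|_X$. The gap is in how you obtain the uniform smallness of $\|\hat u_n-u_n\|_X$. You route this through C\'ea's lemma plus precompactness of $\{\hat u_n:\|v_n\|_L\le 1\}$ in $X$, which you justify by compactness of the embedding $Y\hookrightarrow L$ ``in the applications''. That is not one of the hypotheses of the lemma: the abstract setting only assumes $Y\subset L$ continuously embedded, so neither the ``uniform approximation on precompact sets'' step nor the weak-to-strong upgrade in your alternative contradiction argument is available. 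As stated, your proof establishes the lemma only under an additional compactness assumption.

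The ingredient you are missing is already sitting in Assumption \ref{ass:discreteInfSupB}, namely the second property of the interpolation operator, $\delta_n:=\sup_{\xi\in Y}\|\xi-I_n\xi\|_L/\|\xi\|_Y\to 0$, which you never use quantitatively. Since $I_n\xi\in Y_n$ and $b(u_n,\cdot)$ is invariant under $I_n$ in its second slot, for every $\xi\in Y$ one has
\begin{equation*}
 b(\hat u_n-u_n,\xi)=m(v_n,\xi)-b(u_n,I_n\xi)=m(v_n,\xi-I_n\xi)\le \|m\|\,\delta_n\,\|v_n\|_L\,\|\xi\|_Y ,
\end{equation*}
and the (continuous) inf-sup for $b$ in its first argument --- which, as you note, follows from the discrete inf-sup and density --- gives $\|\hat u_n-u_n\|_X\le C\delta_n\|v_n\|_L$ directly and uniformly in $v_n$. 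Feeding this into your polarisation estimate absorbs the error into the coercivity constant for $n\ge N$, with no compactness and no C\'ea argument needed. This is the mechanism the cited Lemma 2.2 of \cite{EllFriHob19} is built on, and it is precisely why the decay of $\|\xi-I_n\xi\|_L/\|\xi\|_Y$ appears in Assumption \ref{ass:discreteInfSupB} at all.
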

Now we have the required assumptions, we assert the well-posedness of Problem \ref{penaltyProblem}.
\begin{theorem}\label{Thm:PenaltyProblem}
Given Assumptions \ref{ass:AbstractCoercivity}, \ref{InfSupSOSCoercivity} and \ref{ass:discreteInfSupB}, there is a unique solution to Problem \ref{penaltyProblem}.
Furthermore it holds that
\[
	\|u^\epsilon\|_X + \|w^\epsilon\|_Y \leq C\left(1+\epsilon^{-1}\right) \left(\|f\|_{X^*} + \left(1+\epsilon^{-1} \right)\|g\|_{Y^*} + \epsilon^{-1}\|s\|_S \right)
\]%\marginpar{is this worth? Later have bounded estimate}
\end{theorem}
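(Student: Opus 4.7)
The plan is to reduce Problem \ref{penaltyProblem} to the constraint-free saddle point system analysed in \cite{EllFriHob19}. First I would define the shifted bilinear form $c_\epsilon\colon X\times X\to\R$ by $c_\epsilon(u,\eta):=c(u,\eta)+\tfrac{1}{\epsilon}(Tu,T\eta)_S$ and the shifted linear functional $f_\epsilon\in X^*$ by $\langle f_\epsilon,\eta\rangle:=\langle f,\eta\rangle+\tfrac{1}{\epsilon}(s,T\eta)_S$. Then Problem \ref{penaltyProblem} is exactly the two-field saddle point problem of \cite{EllFriHob19} with $c$, $f$ replaced by $c_\epsilon$, $f_\epsilon$, while $b$, $m$, $g$, $X$, $Y$, $L$ and the approximating spaces are unchanged.

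Next I would verify the hypotheses of \cite{EllFriHob19} for this modified system. Assumptions \ref{InfSupSOSCoercivity} and \ref{ass:discreteInfSupB} concern only $b$ and the approximating spaces, so they transfer verbatim. The analogue of Assumption \ref{ass:AbstractCoercivity} for $c_\epsilon$ requires that $b(u,\xi)=m(w,\xi)$ for all $\xi\in Y$ implies $C\|w\|_L^2\le c_\epsilon(u,u)+m(w,w)$. For $\epsilon\le\epsilon_0$ this is immediate since $\tfrac{1}{\epsilon}(Tu,Tu)_S\ge\tfrac{1}{\epsilon_0}(Tu,Tu)_S$ and Assumption \ref{ass:AbstractCoercivity} applies directly; for $\epsilon>\epsilon_0$ the same inequality is recovered by writing $\tfrac{1}{\epsilon}(Tu,Tu)_S=\tfrac{\epsilon_0}{\epsilon}\cdot\tfrac{1}{\epsilon_0}(Tu,Tu)_S$ and absorbing the $\epsilon$-dependent weight into the constant $C$. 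Existence and uniqueness then follow from the well-posedness theorem of \cite{EllFriHob19}.

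For the stability estimate I would argue directly rather than unwinding the \cite{EllFriHob19} constants. Testing the two lines of Problem \ref{penaltyProblem} with $\eta=u^\epsilon$ and $\xi=w^\epsilon$ and subtracting gives an energy identity which, combined with Assumption \ref{ass:AbstractCoercivity}, controls $\|w^\epsilon\|_L$ and $\tfrac{1}{\sqrt{\epsilon}}\|Tu^\epsilon-s\|_S$ in terms of the data and $\|u^\epsilon\|_X$. The inf-sup condition of Assumption \ref{InfSupSOSCoercivity} applied to the second equation gives $\beta\|u^\epsilon\|_X\le C\|w^\epsilon\|_L+\|g\|_{Y^*}$, and applied to the first equation it yields $\gamma\|w^\epsilon\|_Y\le \|f\|_{X^*}+\|c\|\,\|u^\epsilon\|_X+\tfrac{\|T\|}{\epsilon}(\|s\|_S+\|Tu^\epsilon\|_S)$. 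Combining these three pieces via elementary algebra should produce the stated bound.

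The main obstacle I anticipate is purely the bookkeeping of $\epsilon$-dependence: since $\|c_\epsilon\|$ and $\|f_\epsilon\|_{X^*}$ both grow like $1/\epsilon$, the inf-sup and energy estimates have to be threaded together carefully so that the final bound comes out in the specific form $C(1+\epsilon^{-1})(\|f\|_{X^*}+(1+\epsilon^{-1})\|g\|_{Y^*}+\epsilon^{-1}\|s\|_S)$ rather than with higher powers of $\epsilon^{-1}$ than claimed; in particular the quadratic factor $(1+\epsilon^{-1})^2$ in front of $\|g\|_{Y^*}$ suggests one has to use the $u^\epsilon$-bound inside the $w^\epsilon$-bound and then reinsert the $w^\epsilon$-bound into the $u^\epsilon$-bound.
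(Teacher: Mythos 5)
Your overall strategy is exactly the paper's: the printed proof is a one-line reduction of Problem \ref{penaltyProblem} to the unconstrained saddle point theorem of \cite{EllFriHob19}, obtained by absorbing the penalty into the bilinear form ($c_\epsilon = c + \tfrac1\epsilon(T\cdot,T\cdot)_S$) and into the right-hand side ($f_\epsilon$), with the stated estimate coming from ``carrying through the $\epsilon$ terms'', i.e.\ from $\|c_\epsilon\|\lesssim 1+\epsilon^{-1}$ and $\|f_\epsilon\|_{X^*}\lesssim \|f\|_{X^*}+\epsilon^{-1}\|s\|_S$. Your verification that Assumptions \ref{InfSupSOSCoercivity} and \ref{ass:discreteInfSupB} transfer untouched, and that the coercivity hypothesis for $c_\epsilon$ follows from Assumption \ref{ass:AbstractCoercivity} when $\epsilon\le\epsilon_0$, is correct and is all the paper uses.

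One step in your write-up is wrong as stated, though it concerns a case the paper does not actually need. For $\epsilon>\epsilon_0$ you propose to write $\tfrac1\epsilon(Tu,Tu)_S=\tfrac{\epsilon_0}{\epsilon}\cdot\tfrac1{\epsilon_0}(Tu,Tu)_S$ and ``absorb the weight into $C$''; this amounts to claiming $c(u,u)+\tfrac1\epsilon(Tu,Tu)_S+m(w,w)\ge \tfrac{\epsilon_0}{\epsilon}\bigl(c(u,u)+\tfrac1{\epsilon_0}(Tu,Tu)_S+m(w,w)\bigr)$, which requires $c(u,u)+m(w,w)\ge 0$. That is not available here: Assumption \ref{ass:AbstractCoercivity} only controls the full combination including the $\tfrac1{\epsilon_0}(Tu,Tu)_S$ term, and in the motivating examples $c$ has negative zeroth-order terms and is explicitly not coercive. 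Since the penalty method is only of interest as $\epsilon\to0$ (and the paper's later statements all restrict to $\epsilon$ small), the correct reading is that the theorem is applied for $\epsilon\le\epsilon_0$; you should simply restrict to that regime rather than patch the large-$\epsilon$ case. A second, smaller gloss: in your direct energy argument, Assumption \ref{ass:AbstractCoercivity} applies only to pairs satisfying the \emph{homogeneous} relation $b(u,\xi)=m(w,\xi)$, so for $g\ne0$ you must first subtract a lifting $Gg$ with $b(Gg,\xi)=\langle g,\xi\rangle$ (this is where the discrete inf-sup machinery of Lemmas \ref{lemma:discreteInverse}--\ref{lemma:discreteCoercivity} enters in \cite{EllFriHob19}); this is routine but is precisely the step your sketch skips.
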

\begin{proof}
The existence and uniqueness follows from \cite[Theorem 2.2]{EllFriHob19}, with the estimate following from carrying through the $\epsilon$ terms. %\marginpar{proof commented out}
\end{proof}

Recall that we are interested in the case $\epsilon \to 0$, clearly in the above estimate, the bound diverges.
For numerics, one might like to take $\epsilon$ to be a function of the grid size, as such, the bounds diverging in $\epsilon$ means one must be restrictive in the relationship between grid size and $\epsilon$.
To show uniform bounds, we make use of the solution to Problem \ref{lagrangeProblem}.
%, we then show that the solution of such a problem can be seen as an approximation to Problem \ref{penaltyProblem} and may be used to obtain uniform bounds.
%To establish well-posedness in this abstract setting, we make a further assumption.%, where we also include an assumption for the discrete spaces which will be required for estimates between the solutions of the discrete systems.
%\begin{assumption}\label{ass:InfSupT}
%Assume that there is $\alpha>0$ such that
%\[
%	\alpha \|s\|_S  \leq \sup_{\eta \in X} \frac{(s,T\eta)_S }{\|\eta\|_X},\, \forall s \in S.
%\]
%Additionally assume there is $\tilde{\alpha}>0$, independent of $n$, such that 
%\[
%	\tilde{\alpha} \|s_n\|_S \leq \sup_{\eta_n \in X_n} \frac{(s_n,T\eta_n)_S }{\|\eta_n\|_X},\, \forall s_n \in S_n,
%\]
%where $S_n := T(X_n)$.
%\end{assumption}
%This inf-sup condition over $X$ is equivalent to the surjectivity of the map $X \ni \eta \mapsto (T\eta, \cdot)_S \in S^*$, see for example \cite[Lemma A.40]{ErnGeu04}.
%\marginpar{Give a constructive proof perhaps?}
\subsection{Convergence of penalty approximation}  In this subsection we  assume  Assumptions \ref{ass:AbstractCoercivity}, \ref{kernelInfSup}, \ref{InfSupSOSCoercivity}, and \ref{ass:discreteInfSupB} to hold.
\begin{proposition}\label{Prop:FirstAbstractEstimate} %used to be {FirstAbstractEstimate}
% Given \ref{ass:AbstractCoercivity}, \ref{kernelInfSup}, \ref{InfSupSOSCoercivity}, and\ref{ass:discreteInfSupB}.
Let $(u,w,\lambda)$ solve Problem \ref{lagrangeProblem} and $(u^\epsilon,w^\epsilon)$ solve Problem \ref{penaltyProblem}.
Then it holds that there is $C>0$, independent of $\epsilon$ such that
\[
	\|w-w^\epsilon\|_Y + \|u-u^\epsilon\|_X + \left\lVert \lambda - \epsilon^{-1}T(u^\epsilon-u) \right\rVert_S \leq C \sqrt{\epsilon} \| \lambda \|_S.
\]
\end{proposition}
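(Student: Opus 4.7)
The plan is to introduce the discrete multiplier $\mu^\epsilon := \epsilon^{-1} T(u^\epsilon - u) \in S$, which recasts Problem \ref{penaltyProblem} as a Lagrange-style system with $(u^\epsilon, w^\epsilon, \mu^\epsilon)$ playing the role of $(u, w, \lambda)$. Subtracting this from Problem \ref{lagrangeProblem} will produce the three error identities
\begin{align*}
c(u - u^\epsilon, \eta) + b(\eta, w - w^\epsilon) + (\lambda - \mu^\epsilon, T\eta)_S &= 0 \quad \forall \eta \in X,\\
b(u - u^\epsilon, \xi) - m(w - w^\epsilon, \xi) &= 0 \quad \forall \xi \in Y,\\
T(u - u^\epsilon) &= -\epsilon \mu^\epsilon,
\end{align*}
which will drive everything.

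First I would establish an energy identity by testing the first equation with $\eta = u - u^\epsilon$, using the second with $\xi = w - w^\epsilon$ to substitute $b(u - u^\epsilon, w - w^\epsilon) = m(w - w^\epsilon, w - w^\epsilon)$, and decomposing $\mu^\epsilon = \lambda - (\lambda - \mu^\epsilon)$ inside the $S$-inner product. This should yield
\[
c(u - u^\epsilon, u - u^\epsilon) + m(w - w^\epsilon, w - w^\epsilon) + \epsilon \|\lambda - \mu^\epsilon\|_S^2 = \epsilon (\lambda - \mu^\epsilon, \lambda)_S.
\]
I would then apply Assumption \ref{ass:AbstractCoercivity} to $(u - u^\epsilon, w - w^\epsilon)$ (legitimate by the second error identity) to bound $\|w - w^\epsilon\|_L^2$ from below, noting that $(T(u - u^\epsilon), T(u - u^\epsilon))_S = \epsilon^2 \|\mu^\epsilon\|_S^2$. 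Young's inequality on the right-hand side, combined with $\|\mu^\epsilon\|_S \leq \|\lambda - \mu^\epsilon\|_S + \|\lambda\|_S$ to absorb the resulting $\epsilon^2 \epsilon_0^{-1}\|\mu^\epsilon\|_S^2$ term on the left (which is legitimate for $\epsilon$ sufficiently small), should produce
\[
\|w - w^\epsilon\|_L + \sqrt{\epsilon}\, \|\lambda - \mu^\epsilon\|_S \leq C \sqrt{\epsilon}\, \|\lambda\|_S,
\]
and in particular the uniform-in-$\epsilon$ bound $\|\mu^\epsilon\|_S \leq C\|\lambda\|_S$.

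The main obstacle is that $u - u^\epsilon$ does not lie in $X_0$, so Assumption \ref{kernelInfSup} cannot be applied to it directly. I would circumvent this with a lift: by surjectivity of $T$ together with the open mapping theorem, one can choose $v \in X$ with $Tv = -\epsilon \mu^\epsilon$ and $\|v\|_X \leq C\epsilon\|\mu^\epsilon\|_S$, so that $(u - u^\epsilon - v, w - w^\epsilon) \in X_0 \times Y$. Plugging this pair into \eqref{eq:BigInfSup}, the error identities should make the supremand collapse to $-c(v, \eta) - b(v, \xi)$, bounded by $C\|v\|_X(\|\eta\|_X + \|\xi\|_Y)$. A triangle inequality and the uniform bound on $\|\mu^\epsilon\|_S$ from the previous step then give
\[
\|u - u^\epsilon\|_X + \|w - w^\epsilon\|_Y \leq C\|v\|_X \leq C\epsilon \|\lambda\|_S \leq C\sqrt{\epsilon}\,\|\lambda\|_S
\]
for $\epsilon \leq 1$.

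Finally, to sharpen the multiplier bound from the $O(1)$ energy estimate to $O(\sqrt{\epsilon})$, I would invoke the open mapping theorem once more to pick $\tilde\eta \in X$ with $T\tilde\eta = \lambda - \mu^\epsilon$ and $\|\tilde\eta\|_X \leq C\|\lambda - \mu^\epsilon\|_S$, and substitute this test function into the first error identity to obtain $\|\lambda - \mu^\epsilon\|_S^2 = -c(u - u^\epsilon, \tilde\eta) - b(\tilde\eta, w - w^\epsilon)$. Dividing by $\|\lambda - \mu^\epsilon\|_S$ and using the previous step would yield
\[
\|\lambda - \mu^\epsilon\|_S \leq C(\|u - u^\epsilon\|_X + \|w - w^\epsilon\|_Y) \leq C\sqrt{\epsilon}\,\|\lambda\|_S,
\]
completing the estimate.
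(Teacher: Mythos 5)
Your proof is correct, and while its first and last steps mirror the paper's, the middle takes a genuinely different route. The opening energy estimate is the paper's computation in different clothing: the paper tests the difference of the first equations with $u^\epsilon-u$, applies Assumption \ref{ass:AbstractCoercivity}, and uses Young's inequality on $(\lambda,T(u^\epsilon-u))_S$; your identity $c(u-u^\epsilon,u-u^\epsilon)+m(w-w^\epsilon,w-w^\epsilon)+\epsilon\|\lambda-\mu^\epsilon\|_S^2=\epsilon(\lambda-\mu^\epsilon,\lambda)_S$ is algebraically the same relation, and both versions need $\epsilon$ small relative to $\epsilon_0$ to absorb the $\epsilon_0^{-1}$ contribution (the paper tacitly assumes this too). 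Likewise your closing step for the multiplier, with the explicit lift $\tilde\eta$, is just the quantitative form of the surjectivity inf-sup the paper invokes. The genuine divergence is the bound on $\|u-u^\epsilon\|_X+\|w-w^\epsilon\|_Y$: the paper first gets $\|u-u^\epsilon\|_X\le C\|w-w^\epsilon\|_L$ from the inf-sup for $b$ alone (Assumption \ref{InfSupSOSCoercivity}) together with the second equations, and then applies Assumption \ref{kernelInfSup} to the pair $(0,w-w^\epsilon)$; you instead correct $u-u^\epsilon$ into $X_0$ by a bounded right inverse of $T$ and apply Assumption \ref{kernelInfSup} to $(u-u^\epsilon-v,\,w-w^\epsilon)$, so that the error identities collapse the supremand to $-c(v,\eta)-b(v,\xi)$. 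This buys something real: since $\|v\|_X\le C\|T(u-u^\epsilon)\|_S=C\epsilon\|\mu^\epsilon\|_S\le C\epsilon\|\lambda\|_S$, you obtain $\|u-u^\epsilon\|_X+\|w-w^\epsilon\|_Y\le C\epsilon\|\lambda\|_S$, which is sharper than the stated $O(\sqrt{\epsilon})$ and in fact recovers (and, via $Y\hookrightarrow L$, slightly extends) the $O(\epsilon)$ rate of Proposition \ref{prop:increasedEpsilonConvergence} without the Aubin--Nitsche duality argument used there; the price is that you lean on the quantitative open-mapping bound for $T$, which the paper only needs in its final step, and Assumption \ref{InfSupSOSCoercivity} goes unused in your argument.
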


\begin{proof}
From \eqref{eq:AbstractInf-Sup}, it holds,
\[
	\beta \|u-u^\epsilon\|_X
	\leq
	\sup_{\xi \in Y} \frac{b(u-u^\epsilon,\xi)}{\|\xi\|_Y}
	=
	\sup_{\xi \in Y} \frac{m(w-w^\epsilon,\xi)}{\|\xi\|_Y}
	\leq
	C \|w-w^\epsilon\|_L,
\]
where we have used the second equations of the systems.
Now by taking differences of the first equations of the systems,
	\[
		c(u^\epsilon - u,\eta) + b(\eta, w^\epsilon-w) + \frac{1}{\epsilon} (T(u^\epsilon-u),T\eta)_S = (\lambda,T\eta)_S\, \forall \eta \in X.
	\]
By letting $\eta = u^\epsilon-u$ in the above and from \eqref{eq:AbstractCoercivity}, one has,
\begin{align*}
	C\|w-w^\epsilon\|_L^2 +  \left(\frac{1}{\epsilon}- \frac{1}{\epsilon_0} \right)\|T(u-u^\epsilon)\|_S^2
	\leq&
	c(u^\epsilon-u,u^\epsilon - u) + m(w^\epsilon-w,w^\epsilon-w)
+ \frac{1}{\epsilon}(T(u-u^\epsilon),T(u-u^\epsilon))_S
	\\
	=& (\lambda, T(u^\epsilon-u))_S
	\leq C \| \lambda \|_S \|T(u^\epsilon -u)\|_S
	\\
	\leq&
	C\frac{\epsilon}{2} \| \lambda \|_S^2 + \frac{1}{2\epsilon} \|T(u^\epsilon -u)\|_S^2,
\end{align*}
this has shown
\begin{equation}\label{eq:IntermediateAbstractEpsilonEstimate}
	\|w-w^\epsilon\|_L + \|u-u^\epsilon\|_X + \frac{1}{\sqrt{\epsilon}}\|T(u-u^\epsilon)\|_S \leq C \sqrt{\epsilon} \| \lambda \|_S,
\end{equation}
which shows the result for the $\|u-u^\epsilon\|_X$.

%\end{proof}
%
%We now immediately build upon this result by showing concergence of $w-w^\epsilon$ in the $Y$ norm alongside showing convergence of $\frac{1}{\epsilon}T(u^\epsilon-u)$ to $\lambda$.\marginpar{why not just have one big result?}

%\begin{proposition}\label{Prop:CloseInY}
%	Let $(u,w,\lambda)$ solve Problem \ref{lagrangeProblem} and $(u^\epsilon,w^\epsilon)$ solve Problem \ref{penaltyProblem}.
%	It holds that there is $C>0$ independent of $\epsilon$ such that
%\[
%	\|w-w^\epsilon\|_Y + \left\lVert\lambda - \frac{1}{\epsilon}T(u^\epsilon-u)\right\rVert_S \leq C \sqrt{\epsilon}.
%\]
%\end{proposition}
%\begin{proof}
	Due to Assumption \ref{kernelInfSup}, one has,
\[
	\kappa\|w-w^\epsilon\|_Y \leq \sup_{(\eta,\xi)\in X_0\times Y} \frac{c(0,\eta) +b(\eta,w-w^\epsilon) + b(0,\xi) - m(w-w^\epsilon,\xi)}{\|\eta\|_X + \|\xi\|_Y}.
\]
One then calculates
\[
	c(u^\epsilon-u,\eta)+b(\eta,w^\epsilon-w) = 0~~\forall \eta \in X_0,
\]
hence
\[
	\kappa \|w-w^\epsilon\|_Y\leq \sup_{(\eta,\xi)\in X_0\times Y} \frac{c(u^\epsilon-u,\eta)- m(w-w^\epsilon,\xi)}{\|\eta\|_X + \|\xi\|_Y} \leq C ( \|u^\epsilon-u\|_X + \|w-w^\epsilon\|_L)
\]
this proves the result for $\|w-w^\epsilon\|_Y$ when making use of \eqref{eq:IntermediateAbstractEpsilonEstimate}.
Finally, from surjectivity of $T$, one has
\[
	\alpha  \left\lVert \lambda - {\epsilon}^{-1}T(u^\epsilon-u)\right\rVert_S \leq \sup_{\eta \in X} \frac{ (\lambda - {\epsilon}^{-1}T(u^\epsilon-u), T\eta)_S}{\|\eta\|_X},
\]
where again one calculates
\[
	(\lambda - {\epsilon}^{-1}T(u^\epsilon-u),T\eta)_S = c(u^\epsilon-u, \eta) + b(\eta, w^\epsilon-w)~~ \forall \eta \in X,
\]
thus
\[
	\alpha  \left\lVert\lambda - {\epsilon}^{-1}T(u^\epsilon-u)\right\rVert_S \leq C\|\lambda\|_S \sqrt{\epsilon}.
\]
\end{proof}
Indeed it is possible to use an Aubin-Nitsche type argument to give a higher rate of convergence for $\|u-u^\epsilon\|_X$ and $\|w-w^\epsilon\|_L$.
\begin{proposition}\label{prop:increasedEpsilonConvergence}
%Given \ref{ass:AbstractCoercivity}, \ref{kernelInfSup}, \ref{InfSupSOSCoercivity}, and\ref{ass:discreteInfSupB}
	Let $(u,w,\lambda)$ solve Problem \ref{lagrangeProblem}, let $(u^\epsilon,w^\epsilon)$ solve Problem \ref{penaltyProblem}.
	Then there is a $C>0$ independent of $\epsilon$ such that
	\[
		\|u-u^\epsilon\|_X + \|w-w^\epsilon\|_L \leq C \epsilon.
	\]
\end{proposition}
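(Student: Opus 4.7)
The plan is to upgrade the $\sqrt{\epsilon}$ rate from Proposition \ref{Prop:FirstAbstractEstimate} by an Aubin--Nitsche style duality argument, exploiting the sharp $O(\epsilon)$ control on $\|T(u-u^\epsilon)\|_S$ already contained in \eqref{eq:IntermediateAbstractEpsilonEstimate}. First, subtracting the second equations of Problems \ref{lagrangeProblem} and \ref{penaltyProblem} gives $b(u-u^\epsilon,\xi) = m(w-w^\epsilon,\xi)$ for every $\xi\in Y$, so the continuous inf-sup condition \eqref{eq:AbstractInf-Sup} reduces the $X$-estimate on $u-u^\epsilon$ to an $L$-estimate on $w-w^\epsilon$. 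It therefore suffices to prove $\|w-w^\epsilon\|_L \leq C\epsilon$.

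For this I would introduce the adjoint Lagrange multiplier problem: find $(z,q,\mu)\in X\times Y\times S$ such that
\begin{align*}
c(\eta,z) + b(\eta,q) + (T\eta,\mu)_S &= 0 && \forall\,\eta\in X,\\
b(z,\xi) - m(q,\xi) &= m(w-w^\epsilon,\xi) && \forall\,\xi\in Y,\\
(Tz,z')_S &= 0 && \forall\,z'\in S.
\end{align*}
This is the transpose of the system underlying Problem \ref{lagrangeProblem}, so Theorem \ref{ThmLagrangeProblem} (via the fact that a well-posed square linear problem on reflexive Banach spaces has a well-posed transpose) delivers a unique solution with $\|z\|_X+\|q\|_Y+\|\mu\|_S \leq C\|w-w^\epsilon\|_L$. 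The key step is the identity
\[
m(w-w^\epsilon,w-w^\epsilon) = (T(u-u^\epsilon),\mu)_S.
\]
To derive it, I would test the first error equation with $\eta=z$: since $Tz=0$, both the $\lambda$-term and the $\epsilon^{-1}$-term vanish, leaving $c(u-u^\epsilon,z)+b(z,w-w^\epsilon)=0$. Testing the first dual equation with $\eta=u-u^\epsilon$, substituting the second error equation $b(u-u^\epsilon,q)=m(w-w^\epsilon,q)$, testing the second dual equation with $\xi=w-w^\epsilon$, and using the symmetry of $m$, all combine to give the identity. Coercivity of $m$, Cauchy--Schwarz, the bound on $\mu$, and $\|T(u-u^\epsilon)\|_S\leq C\epsilon\|\lambda\|_S$ from \eqref{eq:IntermediateAbstractEpsilonEstimate} then yield $\|w-w^\epsilon\|_L^2 \leq C\epsilon\|w-w^\epsilon\|_L\|\lambda\|_S$, which completes the proof after combining with the reduction in the first paragraph.

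The main obstacle I anticipate is justifying well-posedness of the adjoint Lagrange problem, since Assumption \ref{kernelInfSup} is stated only as a primal inf-sup on $X_0\times Y$. The cleanest resolution is the isomorphism argument noted above, which upgrades bijectivity of the saddle operator to bijectivity of its transpose via reflexivity; alternatively one can verify the dual inf-sup directly from Assumption \ref{kernelInfSup}, noting that the bilinear form in \eqref{eq:BigInfSup} is symmetric up to the single non-symmetric contribution $c(u,\eta)$.
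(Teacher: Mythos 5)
Your proof is correct and follows essentially the same Aubin--Nitsche duality argument as the paper: the paper introduces the identical adjoint system (with data $(w-w^\epsilon,\cdot)_L$ in place of your $m(w-w^\epsilon,\cdot)$, an immaterial difference given the coercivity of $m$), derives the same identity $\|w-w^\epsilon\|_L^2=(\chi,T(u-u^\epsilon))_S$ by the same testing and cancellations, and closes with the same $O(\epsilon)$ bound on $\|T(u-u^\epsilon)\|_S$ from \eqref{eq:IntermediateAbstractEpsilonEstimate}. Your explicit attention to the well-posedness of the adjoint problem addresses a point the paper passes over silently (it simply cites Theorem \ref{ThmLagrangeProblem}), and the transpose-isomorphism resolution you sketch is the right one.
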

\begin{proof}
	Let $(\psi,\phi,\chi)\in X \times Y \times S$ satisfy
	\begin{equation}\label{eq:PenaltyAubinNitsche}\begin{split}
		c(\eta,\psi) + b(\eta,\phi) + (\chi,T\eta)_S &=0~
		\\
		b(\psi,\xi) - m(\phi,\xi) &=( w-w^\epsilon,\xi)_L ~
		\\
		(T\psi,z) &=0~
	\end{split}
	\begin{split}
	\forall \eta \in X,
	\\
	\forall \xi \in Y,
	\\
	\forall z \in S.
	\end{split}\end{equation}
	This exists and is unique by Theorem \ref{ThmLagrangeProblem} and it holds that there is some $C>0$ with
	\[
		\|\psi\|_X + \|\phi\|_Y + \|\chi\|_S \leq C \|w-w^\epsilon\|_L.
	\]
	Now test \eqref{eq:PenaltyAubinNitsche} with $\left( u-u^\epsilon,w-w^\epsilon,T(u-u^\epsilon) \right)$ and sum the first two equations,
	\begin{align*}
	\|w-w^\epsilon\|^2_L
	=&
	c(u-u^\epsilon,\psi) + b(u-u^\epsilon,\phi) + (\chi,T(u-u^\epsilon))_S + b(\psi,w-w^\epsilon) - m(w-w^\epsilon,\phi)
	\\
	=&
	(\chi,T(u-u^\epsilon))_S.
	\end{align*}
	Where one has $b(u-u^\epsilon,\phi) = m(w-w^\epsilon,\phi)$ and $c(u-u^\epsilon,\psi) = -b(\psi,w-w^\epsilon)$ with the second following from $(T\psi,\lambda)_S = \epsilon^{-1}(T(u^\epsilon-u),T\psi)_S= 0$.
	Hence from the estimates for $\|T(u-u^\epsilon)\|_S$ shown in Proposition \ref{Prop:FirstAbstractEstimate} with $\|u-u^\epsilon\|_X \leq C \|w-w^\epsilon\|_L$ and $\|\chi\|_S \leq C \|w-w^\epsilon\|_L$, the result.
\end{proof}

We may now use Proposition \ref{Prop:FirstAbstractEstimate} to give uniform bounds on $u^\epsilon$ and $w^\epsilon$.
\begin{corollary}\label{cor:uniformBoundsEpsi}
	Given $\epsilon>0$ sufficiently small, let $(u^\epsilon,w^\epsilon) \in X \times Y$ solve Problem \ref{penaltyProblem}.
	Then there is $C>0$ with
	\[
		\|u^\epsilon\|_X + \|w^\epsilon\|_Y
		+
		\frac{1}{\epsilon}\left\lVert T(u^\epsilon-u)\right\rVert_S
		\leq C (\|f\|_{X^*} + \|g\|_{Y^*} + \|s\|_S).
	\]
\end{corollary}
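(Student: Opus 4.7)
The plan is to derive the uniform bound by combining Proposition \ref{Prop:FirstAbstractEstimate} with the a priori estimate for the Lagrange multiplier problem given in Theorem \ref{ThmLagrangeProblem}, using the triangle inequality. Since Proposition \ref{Prop:FirstAbstractEstimate} controls the differences $\|u - u^\epsilon\|_X$, $\|w - w^\epsilon\|_Y$ and $\|\lambda - \epsilon^{-1} T(u^\epsilon - u)\|_S$ by $C \sqrt{\epsilon}\|\lambda\|_S$, the strategy is to absorb the $\sqrt{\epsilon}$ factor (for $\epsilon$ small) and shift the norms from $(u^\epsilon, w^\epsilon, \epsilon^{-1}T(u^\epsilon-u))$ onto $(u, w, \lambda)$, which are then controlled by the data through Theorem \ref{ThmLagrangeProblem}.

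First I would invoke Theorem \ref{ThmLagrangeProblem} to obtain a solution $(u, w, \lambda)$ of Problem \ref{lagrangeProblem} with
\[
\|u\|_X + \|w\|_Y + \|\lambda\|_S \leq C\bigl(\|f\|_{X^*} + \|g\|_{Y^*} + \|s\|_S\bigr).
\]
Then, writing $u^\epsilon = u + (u^\epsilon - u)$ and similarly for $w^\epsilon$, the triangle inequality together with Proposition \ref{Prop:FirstAbstractEstimate} gives
\[
\|u^\epsilon\|_X + \|w^\epsilon\|_Y \leq \|u\|_X + \|w\|_Y + C\sqrt{\epsilon}\,\|\lambda\|_S,
\]
and substituting the bound on $\|\lambda\|_S$ yields the desired estimate for the first two terms provided $\epsilon$ is sufficiently small that $\sqrt{\epsilon}\leq 1$ (or any other fixed constant).

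For the last term, I would write
\[
\frac{1}{\epsilon}\|T(u^\epsilon - u)\|_S
\leq
\|\lambda\|_S + \bigl\lVert \lambda - \epsilon^{-1}T(u^\epsilon - u)\bigr\rVert_S,
\]
again by the triangle inequality. The first summand on the right is bounded by the data via Theorem \ref{ThmLagrangeProblem}, while the second is bounded by $C\sqrt{\epsilon}\,\|\lambda\|_S$ thanks to Proposition \ref{Prop:FirstAbstractEstimate}. Assembling these gives the claim.

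I do not anticipate a genuine obstacle: the work has already been done in Proposition \ref{Prop:FirstAbstractEstimate} and Theorem \ref{ThmLagrangeProblem}, and the corollary is essentially bookkeeping via the triangle inequality. The only mild point to track is that the $\sqrt{\epsilon}$ factor ensures the difference estimates remain small (in particular, uniformly bounded) as $\epsilon \to 0$, which is why the hypothesis ``$\epsilon$ sufficiently small'' appears — any fixed upper bound on $\epsilon$ suffices to absorb the $\sqrt{\epsilon}$ into the constant $C$.
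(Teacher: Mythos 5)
Your proof is correct and follows exactly the route the paper intends: the corollary is stated without proof immediately after the remark that Proposition \ref{Prop:FirstAbstractEstimate} yields the uniform bounds, and the intended argument is precisely your combination of that proposition with the stability estimate of Theorem \ref{ThmLagrangeProblem} via the triangle inequality. Nothing further is needed.
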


%It is clear that without Assumption \ref{kernelInfSup}, one may obtain some of these results.
%The method for this is to take the limit of Problem \ref{penaltyProblem} to recover Theorem \ref{lagrangeProblem}, which gives Assumption \ref{kenelInfSup}.
%To do this, one may follow the proof of Proposition \ref{Prop:FirstAbstractEstimate}, where instead of $(u,w,\lambda)$ solving Problem \ref{lagrangeProblem}, they are arbitrary, but requiring $Tu=Z$.
%This should then give some uniform bounds on $\|w^\epsilon\|_L + \|u^\epsilon\|_X + 1/\epsilon \|Tu^\epsilon -Z\|_S$.
%One then has $w^\epsilon$ is uniformly bounded in $Y$ from the first equation of the system.
%Thus one may take a weakly convergent subsequence, $\epsilon_n \to 0$, this limit should then satisy the Lagrange problem, Problem \ref{lagrangeProblem}.
%It is worth remarking, that one need not have existence of solutions to Problem \ref{lagrangeProblem} in order to obtain $T u^\epsilon \to Z$.
%One need only consider some arbitrary $(u,w)$ such that $Tu = Z$ and $b(u,\xi) = m(w,\xi)$ $\forall \xi \in Y$.
%It is possible to then follow this argument to obtain uniform bounds for $u^\epsilon$ and $w^\epsilon$ in $X\times L$ as in the first part of Proposition \ref{Prop:FirstAbstractEstimate}.

\section{Abstract finite element method}\label{Sec:AbstractFE}
We now formulate and analyse an abstract finite element method to approximate the solutions to Problems \ref{penaltyProblem} %, in order to reduce $\epsilon$ dependence, we also analyse the error associated to
and Problem \ref{lagrangeProblem}.
We formulate the method in the sense of an external approximation, this is motivated by our wish to apply the formulation to surface finite elements.
%\begin{comment} %****
\begin{definition}
	For $h>0$, let $X_h$, $Y_h$ and $S_h$ be finite dimensional normed vector spaces with lift operators
\[
	l^X_h\colon X_h \to X,~~ l^Y_h\colon Y_h \to Y,~~ l^S_h\colon S_h \to S,
\]
which are bounded, linear and injective and define $X_h^l :=l^X_h(X_h),$ $Y_h^l:=l^Y_h(Y_h),$ $S_h^l:= l^S_h(S_h)$.

Let $c_h,\,b_h,\,m_h$ denote bilinear forms such that
\begin{align*}
	c_h\colon& X_h \times X_h \to \R,~
	b_h\colon X_h \times Y_h \to \R,~
	m_h\colon Y_h \times Y_h \to \R,
\end{align*}
with $m_h$ symmetric and let $(\cdot,\cdot)_{S_h}$ be an inner product on $S_h$.
Further, let $f_h \in X_h^*,$ $g_h\in Y_h^*$ and $s_h \in S_h$.
\end{definition}
The spaces $X_h^l$, $Y_h^l$ and $S_h^l$ may be identified as the spaces $X_n$, $Y_n$ and $S_n$ in Section \ref{sectionAbstractForm}.
For this section, we assume that the following approximations hold.
\begin{assumption}\label{ass:AbstractApproximationProperties}
	%Suppose $X_h^l$, $Y_h^l$ and $S_h^l$ satisfy Assumptions \ref{ass:discreteInfSupB} and \ref{kernelInfSup}. % not sure if we actually want to put this, since we later refer to it 
	We assume that there is $C>0$ and $k\in \mathbb{N}$ such that
\begin{align*}
	|c(\eta_h^l,\xi_h^l) - c_h(\eta_h,\xi_h)| \leq& Ch^k \|\eta_h^l\|_X \|\xi_h^l\|_X\quad\forall (\eta_h,\xi_h) \in X_h\times X_h,
\\	
	|b(\eta_h^l,\xi_h^l) - b_h(\eta_h,\xi_h)| \leq& Ch^k \|\eta_h^l\|_X \|\xi_h^l\|_Y\quad\forall (\eta_h,\xi_h) \in X_h\times Y_h,
\\
	|m(\eta_h^l,\xi_h^l) - m_h(\eta_h,\xi_h)| \leq& Ch^k \|\eta_h^l\|_L \|\xi_h^l\|_L\quad\forall (\eta_h,\xi_h) \in Y_h\times Y_h,
	\\
	|(z_h^l, T\eta_h^l)_{S} - (z_h,T_h \eta_h)_{S_h} | \leq& Ch^k \|\eta_h^l\|_X \|z_h^l\|_S\quad\forall (\eta_h,z_h)\in X_h \times S_h,
	\\
	|\langle f, \eta_h^l \rangle - \langle f_h,\eta_h\rangle| \leq& Ch^k \|f\|_{X^*} \|\eta_h^l\|_X\quad\forall \eta_h\in X_h,
	\\
	|\langle g, \xi_h^l \rangle - \langle g_h,\xi_h\rangle| \leq& Ch^k \|g\|_{Y^*} \|\xi_h^l\|_Y\quad\forall \xi_h\in Y_h,
	\\
	|(s,z_h^l)_S -(s_h,z_h)_{S_h}| \leq& Ch^k \|s\|_S\|z_h^l\|_S\quad\forall z_h \in S_h.
\end{align*}
\end{assumption}

The finite element approximations can now be formulated.

\subsection{Finite element method for the Lagrange multiplier problem}
For this subsection, we suppose Assumptions \ref{ass:AbstractCoercivity}, \ref{kernelInfSup}, \ref{ass:discreteInfSupB} and the following Assumption \ref{ass:DiscreteLagrange} hold true.
\begin{assumption}\label{ass:DiscreteLagrange}
There is $\tilde{\zeta}>0$ independent of $h$ such that for any $(w_h,\chi_h) \in Y_h \times S_h$,
\[
	\tilde{\zeta} (\|\chi_h^l\|_S + \|w_h^l\|_Y)
	\leq
	\sup_{(\eta_h,\xi_h)\in X_h\times Y_h}\frac{b(\eta_h^l,w_h^l)+(T\eta_h^l,\chi_h^l)_S + m(w_h^l,\xi_h^l)}{\|\eta_h^l\|_X+\|\xi_h^l\|_Y}.
\]
\end{assumption}
\begin{problem}\label{discreteLagrangeProblem}
Find $(u_h,w_h,\lambda_h) \in X_h\times Y_h\times S_h$ such that
\begin{align*}
	c_h(u_h,\eta_h) + b_h(\eta_h,w_h) + (\lambda_h,T_h\eta_h)_{S_h} &= \langle f_h,\eta_h\rangle\quad \forall \eta_h \in X_h,
	\\
	b_h(u_h,\xi_h) - m_h(w_h,\xi_h) &= \langle g_h, \xi_h \rangle \quad \forall \xi_h \in Y_h
	\\
	(T_h u_h,z_h)_{S_h} &= (s_h,z_h)_{S_h}\quad \forall z_h \in S_h.
\end{align*}
\end{problem}

\begin{theorem}\label{ThmDiscreteLagrangeProblem}
%Given Assumptions \ref{ass:AbstractCoercivity}, \ref{kernelInfSup}, \ref{ass:discreteInfSupB} and \ref{ass:DiscreteLagrange}
For sufficiently small $h$, there exists a solution to Problem \ref{discreteLagrangeProblem}.
Furthermore there is $C>0$ independent of $h$ such that
\begin{align*}
	\|u-u_h^l\|_X + \| w-w_h^l\|_Y + \|\lambda-\lambda_h^l\|_S
	\leq
	C\inf_{\substack{(\eta_h,\xi_h,\chi_h) \\ \in X_h\times Y_h\times S_h}} &\left( \|u-\eta_h^l\|_X + \|w-\xi_h^l\|_Y + \|\lambda - \chi_h^l\|_S \right)
	\\
	&+
	Ch^k \left( \|f\|_{X^*} + \|g\|_{Y^*} + \|s\|_S\right).
\end{align*}%\marginpar{align problems}
\end{theorem}
\begin{proof}
	The argument follows along the lines of \cite[Theorem 3.1]{EllFriHob19}.
	%Indeed, we omit a lengthy calculation which is presented there.
	%The argument follows very similarly to \cite[Theorem 3.1]{EllFriHob19}, but differs at a few places where the third equation plays a role.\marginpar{poor wording!}
	For existence and uniqueness, it is sufficient to show uniqueness for the homogeneous case, $f_h=g_h=s_h=0$, as the system is linear and finite dimensional.
	In this case, we see that
	\begin{equation}\label{eq:discreteLagrangeHomongenous}
		c_h(u_h,u_h)+m_h(w_h,w_h) =0 ~\mbox{and}~ (T_hu_h, T_h u_h)_{S_h} =0.
	\end{equation}
	We write $G_h^l\colon Y^* \to X_h^l$ to be, for each $y \in Y^*$, $G_h^l y$ is the unique element such that
	\[
		b (G_h^l y, \xi_h^l) = \langle y,\xi_h^l\rangle\quad \forall \xi_h^l \in Y_h^l,
	\]
	as considered in Lemma \ref{lemma:discreteInverse}.
%	Further define $G_h:= (l^X_h)^{-1}\circ G_h^l$.
	Now notice that in this homogeneous case,
	\begin{align*}
		\tilde{\beta} \|u_h^l - G_h^l m(w_h^l,\cdot)\|_X
		\leq&
		\sup_{\xi_h\in Y_h} \frac{b(u_h^l-G_h^l m(w_h^l,\cdot),\xi_h^l)}{\|\xi_h^l\|_Y}
		\\
		\leq&
		\sup_{\xi_h\in Y_h} \frac{b(u_h^l,\xi_h^l) - b_h(u_h,\xi_h) + m_h(w_h,\xi_h) - m(w_h^l,\xi_h) }{\|\xi_h^l\|_Y}
		\\
		\leq&
		Ch^k \|w_h^l\|_L,
	\end{align*}
	where the final line follows from Assumption \ref{ass:AbstractApproximationProperties} and that $\|u_h^l\|_X \leq C \|w_h^l\|_L$, which is a consquence of the second equation of the system with the discrete inf-sup \eqref{eq:DiscreteAbstractInf-Sup}.%%%%%%****
	It follows from \eqref{eq:DiscreteAbstractCoercivity},
	\begin{align*}
	C\|w_h^l \|_L^2
	\leq& c(G_h^lm(w_h^l,\cdot),G_h^lm(w_h^l,\cdot)) + m(w_h^l,w_h^l)
	+ \frac{1}{\epsilon_0}(TG_h^lm(w_h^l,\cdot),TG_h^lm(w_h^l,\cdot))_{S}
	\\
	=&
	c(u_h^l, u_h^l) + m(w_h^l,w_h^l) +  \frac{1}{\epsilon_0}(T u_h^l,T u_h^l)_{S}  -c_h(u_h,u_h) -m_h(w_h,w_h) -\frac{1}{\epsilon_0}(T_h u_h,T_h u_h)_{S_h}
	\\
	&+
	c(G_h^lm(w_h^l,\cdot),G_h^lm(w_h^l,\cdot)) - c(u_h^l,u_h^l) + \frac{1}{\epsilon_0}(TG_h^lm(w_h^l,\cdot),TG_h^lm(w_h^l,\cdot))_{S} - \frac{1}{\epsilon_0}(T u_h^l,T u_h^l)_{S}
	\\
	\leq& \tilde{C}h^k \|w_h^l\|_L^2,
	\end{align*}
	where we have made use of the above bound, $\|u_h^l - G_h^l m(w_h^l,\cdot)\|_X \leq C h^k \|w_h^l\|_L$, along with the approximation assumptions on the discrete bilinear forms, Assumption \ref{ass:AbstractApproximationProperties}.
	Thus for sufficiently small $h$, $w_h^l = 0$, it follows that $\lambda_h = w_h = u_h =0$, where $u_h=0$ comes from the second equation and $\lambda_h=0$ follows from the first equation and Assumption \ref{ass:DiscreteLagrange}.
	
	We have for any $\eta_h \in X_h$ and $\xi_h \in Y_h$,
\begin{align*}
	\tilde{\beta} \|u_h^l-\eta_h^l\|_X
	\leq&
	\sup_{v_h \in Y_h} \frac{b(u_h^l-\eta_h^l,v_h^l)}{\|v_h^l\|_Y}
	\\
	\leq&
	C\left(\|u-\eta_h^l\|_X + \| w -\xi_h^l\|_Y + \|w_h^l - \xi_h^l \|_L\right)
	+
	Ch^k \left( \|u_h^l\|_X + \|g\|_{Y^*} + \|w_h^l\|_L \right),
\end{align*}
where we have made use of
\begin{align*}
	b(u_h^l-\eta_h^l,v_h^l) =& b(u-\eta_h^l,v_h^l) + b(u_h^l,v_h^l) - b_h(u_h,v_h) + \langle g_h,v_h\rangle
	-\langle g,v_h^l\rangle
	\\
	&+
	m_h(w_h,v_h) -m(w_h^l,v_h^l) + m(w_h^l-\xi_h^l,v_h^l) - m(w-\xi_h^l,v_h^l).
\end{align*}
In a similar fashion, for any $\chi_h \in S_h$, one obtains from Assumption \ref{ass:DiscreteLagrange},
\begin{align*}
	\tilde{\kappa}(\|w_h^l - \xi_h^l\|_Y + \|\lambda_h^l - \chi_h^l\|_S) \leq& \sup_{(x_h,v_h) \in X_h \times Y_h}
	\frac{b(x_h^l, w_h^l - \xi_h^l ) + (\lambda_h^l - \chi_h^l, Tx_h^l)_S + m(w_h^l-\xi_h^l, v_h^l)}{\|x_h^l\|_X +\|v_h^l\|_Y}
	\\
	\leq&
	C\left(\|u-\eta_h^l\|_X + \|w-\xi_h^l\|_Y + \|\lambda-\chi_h^l \|_S + \|u_h^l-\eta_h^l\|_X\right)
	\\
	&+  C h^k \left(\|u_h^l\|_X + \|w_h^l\|_Y + \|f\|_{X^*} + \| \lambda_h^l \|_S\right)
\end{align*}
where we have made use of
\begin{align*}
	b(x_h^l,w_h^l - \xi_h^l) +(\lambda_h^l - \chi_h^l,Tx_h^l)_S
	=&
	b(x_h^l, w-\xi_h^l) + (\lambda - \chi_h^l,Tx_h^l)_S + b(x_h^l, w_h^l) - \langle f,x_h^l\rangle
	\\ &+ c(u,x_h^l)
	+
	(\lambda_h^l,Tx_h^l)_S +\langle f_h,x_h\rangle -(\lambda_h,T_h x_h)_{S_h}
	\\
	&-b_h(x_h,w_h) -c_h(u_h,x_h)
	\\
	\leq&
	C\|x_h^l\|_X \left( \|w-\xi_h^l\|_Y + \|\lambda-\chi_h^l \|_S\right)
	\\
	&+
	Ch^k \|x_h^l\|_X \left( \|w_h^l\|_Y + \|f\|_{X^*}+ \|g\|_{Y^*} + \| \lambda_h^l \|_S  \right)
	\\
	&+ c(u-\eta_h^l,x_h^l) -c(u_h^l-\eta_h^l,x_h^l) + c(u_h^l,x_h^l) - c_h(u_h,x_h),
\end{align*}
and also
\begin{align*}
	m(w_h^l-\xi_h^l,v_h^l) =&
	m(w-\xi_h^l,v_h^l) + m(w_h^l,v_h^l) - b(u,v_h^l)
	\\
	&+\langle g,v_h^l \rangle
	-m_h(w_h,v_h) + b_h(u_h,v_h) -\langle g_h,v_h\rangle.
\end{align*}
Combining these two inequalities gives
\begin{equation}\label{EstimateWithLRHS}
\begin{split}
	\|u_h^l - \eta_h^l \|_X + \|w_h^l- \xi_h^l\|_Y + \|\lambda_h^l -\chi_h^l\|_S
	\leq&
	C\left( \|u-\eta_h^l\|_X + \|w-\xi_h^l\|_Y + \|\lambda-\chi_h^l\|_S + \|w_h^l -\xi_h^l\|_L\right)
	\\
	&+
	C
	h^k\left( \|u_h^l\|_X + \|w_h^l\|_Y + \|\lambda_h^l\|_S + \|f\|_{X^*} + \|g\|_{Y^*}\right).
\end{split}
\end{equation}
All that is remaining, is to bound the $L$-norm which appears on the right hand side.
We again use Lemma \ref{lemma:discreteCoercivity} to obtain,
\begin{equation}
	\begin{split}\label{eq:LagrangeExistenceFirstLEstimate}
	C\|w_h^l-\xi_h^l\|_L^2
	\leq&
	[c(u_h^l-\eta_h^l,u_h^l-\eta_h^l) + m(w_h^l-\xi_h^l,w_h^l-\xi_h^l)
	\\
	&-
	c_h(u_h-\eta_h,u_h-\eta_h) - m_h(w_h-\xi_h,w_h-\xi_h)]
	\\
	&+
	[c_h(u_h-\eta_h,u_h-\eta_h) + m_h(w_h-\xi_h,w_h-\xi_h)]
	\\
	&+
	[c(G_h^lm(w_h^l-\xi_h^l,\cdot),G_h^lm(w_h^l-\xi_h^l,\cdot))-c(u_h^l-\eta_h^l,u_h^l-\eta_h^l)]
	\\
	&+
	\frac{1}{\epsilon_0}(T((G_h^lm(w_h^l-\xi_h^l,\cdot)),T(G_h^lm(w_h^l-\xi_h^l,\cdot)))_S.
	\end{split}
\end{equation}
The first and third terms are dealt with in \cite{EllFriHob19} (the first term is dealt with in their (3.5) and the third term immediately following), giving%\marginpar{is this admissible? possibly at least say the ideas?}
\begin{align*}
	|c(u_h^l-\eta_h^l,u_h^l-\eta_h^l) + m(w_h^l-\xi_h^l,w_h^l-\xi_h^l)
	-&
	c_h(u_h-\eta_h,u_h-\eta_h) - m_h(w_h-\xi_h,w_h-\xi_h)|
	\\
	&\leq
	Ch^k(\mathcal{B}^2 + \mathcal{B} \|w_h^l-\xi_h^l\|_L + \|w_h^l-\xi_h^l\|_L^2),
\end{align*}
this is a consequence of Assumption \ref{ass:AbstractApproximationProperties},
and
\[
	|c(G_h^lm(w_h^l-\xi_h^l,\cdot),G_h^lm(w_h^l-\xi_h^l,\cdot))-c(u_h^l-\eta_h^l,u_h^l-\eta_h^l)|
	\leq
	C (\mathcal{B}^2 + \mathcal{B}\|w_h^l-\xi_h^l\|_L),
\]
which follows from the discrete inf-sup, Assumption \ref{ass:discreteInfSupB} and the definition of $G_h^l$.
Where
\begin{align*}
	\mathcal{B}:=&
	\|u-\eta_h^l\|_X + \|w- \xi_h^l\|_Y + \|\lambda-\chi_h^l \|_S
	\\
	&+
	h^k \left(\|g\|_{Y^*} + \|f\|_{X^*} + \|s\|_S + \|u_h^l\|_X + \|w_h^l\|_Y + \|\lambda_h^l\|_S + \|\eta_h^l\|_X + \| \xi _h^l\|_Y + \|\chi_h^l\|_S \right).
\end{align*}
The fourth term of \eqref{eq:LagrangeExistenceFirstLEstimate} may be dealt with in the same way as the third and first part of the same equation.
%,
%\begin{align*}
%	(T((G_h^lm(w_h^l-\xi_h^l,\cdot)),T(G_h^lm(w_h^l-\xi_h^l,\cdot)))_S
%	=
%	(T((G_h^lm(w_h^l-\xi_h^l,\cdot)),T(G_h^lm(w_h^l-\xi_h^l,\cdot)))_S - (T(u_h^l-\eta_h^l),T(u_h^l-\eta_h^l))_S +  (T(u_h^l-\eta_h^l),T(u_h^l-\eta_h^l))_S,
%\end{align}
%these first two terms may be dealt with as in the third term of \eqref{eq:LagrangeExistenceFirstLEstimate}, 
For the remaining term of \eqref{eq:LagrangeExistenceFirstLEstimate}, we calculate,
\begin{equation}\label{eq:LagrangeExistenceSecondLEstimate}
\begin{split}
	c_h(u_h-\eta_h,u_h-\eta_h) +& m_h(w_h-\xi_h,w_h-\xi_h)
	\\
	=&
	[c_h(u_h-\eta_h,u_h-\eta_h) + b_h(u_h-\eta_h,w_h-\xi_h) + (\lambda_h-\chi_h,T_h(u_h-\eta_h))_{S_h}]
	\\&-
	[b_h(u_h-\eta_h,w_h-\xi_h) - m_h(w_h-\xi_h,w_h-\xi_h)]
	\\&-
	(\lambda_h-\chi_h,T_h(u_h-\eta_h))_{S_h}.
\end{split}
\end{equation}
We now split up the calculation of \eqref{eq:LagrangeExistenceSecondLEstimate}.
For the second term of \eqref{eq:LagrangeExistenceSecondLEstimate},
\begin{align*}
	b_h(u_h-\eta_h,w_h-\xi_h) -& m_h(w_h-\xi_h,w_h-\xi_h)
	\\	
	=&
	b(u-\eta_h^l,w_h^l-\xi_h^l) - m(w-\xi_h^l) - \langle g,w_h^l-\xi_h^l\rangle + \langle g_h, w_h-\xi_h\rangle
	\\
	&+
	b(\eta_h^l,w_h^l-\xi_h^l) - m(\xi_h^l,w_h^l-\xi_h^l) - b_h(\eta_h,w_h-\xi_h) + m_h(\xi_h,w_h-\xi_h)
	\\
	\leq&
	C\|w_h^l-\xi_h^l\|_Y \left( \|u-\eta_h^l\|_X + \|w-\xi_h^l\|_Y + h^k \left(\|g\|_{Y^*} + \|\eta_h^l\|_X + \|\xi_h^l\|_Y \right)\right)
	\\
	\leq&
	C (\mathcal{B}^2 + \mathcal{B}\|w_h^l-\xi_h^l\|_L).
\end{align*}
With the first term of \eqref{eq:LagrangeExistenceSecondLEstimate} resulting in,
\begin{align*}
	c_h(u_h-\eta_h,u_h-\eta_h) &+ b_h(u_h-\eta_h,w_h-\xi_h) + (\lambda_h-\chi_h, T_h (u_h-\eta_h))_{S_h}
	\\
	=&
	c(u-\eta_h^l,u_h^l-\eta_h^l) + b(u_h^l-\eta_h^l,w-\xi_h^l) + (\lambda-\chi_h^l,T(u_h^l-\eta_h^l))_S 
	\\
	&-
	\langle f, u_h^l - \eta_h^l \rangle + \langle f_h, u_h - \eta_h\rangle
	+ c(\eta_h^l,u_h^l-\eta_h^l) + b(u_h^l-\eta_h^l,\xi_h^l)
	\\
	&+
	(\chi_h^l,T(u_h^l-\eta_h^l))_S -
	c_h(\eta_h,u_h-\eta_h) - b_h(u_h-\eta_h,\xi_h) - (\chi_h, T_h (u_h-\eta_h))_{S_h}
	\\
	\leq&
	C\|u_h^l - \eta_h^l \|_X\left( \|u-\eta_h^l\|_X + \|w- \xi_h^l\|_Y + \|\lambda-\chi_h^l\|_S\right)
	\\
	&+
	Ch^k \|u_h^l-\eta_h^l\|_X\left( \|\eta_h^l\|_X + \|\xi_h^l\|_Y +\|\chi_h^l\|_S +\|f\|_{X^*} \right)
	\\
	\leq&C (\mathcal{B}^2 + \mathcal{B}\|w_h^l-\xi_h^l\|_L).
\end{align*}
Finally, from the third term of \eqref{eq:LagrangeExistenceSecondLEstimate}, one has
\begin{align*}
	(T_h(u_h-\eta_h), \lambda_h - \chi_h)_{S_h}
	=&
	(T(u-\eta_h^l),\lambda_h^l-\chi_h^l)_S + (T \eta_h^l,\lambda_h^l-\chi_h^l)_S - (s,\lambda_h^l - \chi_h^l)_S
	\\
	&- (T_h \eta_h, \lambda_h-\chi_h)_{S_h} + (s_h,\lambda_h-\chi_h)_{S_h}
	\\
	\leq&
	C\|\lambda_h^l-\chi_h^l\|_S(\|u-\eta_h^l\|_X + h^k(\|s\|_S + \|\eta_h^l\|_X))
	\\
	\leq&
	C (\mathcal{B}^2 + \mathcal{B}\|w_h^l-\xi_h^l\|_L).
\end{align*}
Thus for sufficiently small $h$, we have
\[
	\|w_h^l- \xi_h^l\|_L^2 \leq C (\mathcal{B}^2 + \mathcal{B}\|w_h^l-\xi_h^l\|_L),
\]
which after an application of Young's inequality, gives us the desired control on $\|w_h^l- \xi_h^l\|_L$.
When putting this into \eqref{EstimateWithLRHS} gives,
\begin{equation}\label{eq:LagrangeAbstractFEMBBound}
	\|u_h^l-\eta_h^l\|_X + \|w_h^l-\xi_h^l\|_Y + \|\lambda_h^l-\chi_h^l\|_S
	\leq
	C\mathcal{B}.
\end{equation}
By choice of $\eta_h = \xi_h=\chi_h=0$, one has
\begin{align*}
	\|u_h^l\|_X + \|w_h^l\|_Y + \|\lambda_h^l\|_S
	\leq&
	C\left(\|u\|_X + \|w\|_Y + \|\lambda\|_S\right)
	\\
	&+
	Ch^k\left( \|f\|_{X^*} + \|g\|_{Y^*} + \|s\|_S + \|u_h^l\|_X + \|w_h^l\|_Y + \|\lambda_h^l\|_S \right),
\end{align*}
which for sufficiently small $h$, and the estimates shown in Theorem \ref{ThmLagrangeProblem}, gives
\[
	\|u_h^l\|_X + \|w_h^l\|_Y + \|\lambda_h^l\|_S
	\leq
	C\left( \|f\|_{X^*} + \|g\|_{Y^*} + \|s\|_S \right).
\]
Using this and the triangle inequality gives
\begin{align*}
\|u-u_h^l\|_X + \|w-w_h^l\|_Y + \|\lambda-\lambda_h^l\|_S
\leq&
C\left(\|u-\eta_h^l\|_X + \|w-\xi_h^l\|_Y + \|\lambda-\chi_h^l\|_S\right)
\\
+&Ch^k\left( \|g\|_{Y^*} + \|f\|_{X^*} + \|s\|_S + \|\eta_h^l\|_X + \|\xi_h^l\|_Y + \|\chi_h^l\|_S \right).
\end{align*}
We are left to remove the isolated $\eta_h^l$, $\xi_h^l$ and $\chi_h^l$ terms on the right hand side, this can be done by
\[	
	\|\eta_h^l\|_X + \| \xi_h^l\|_Y + \|\chi_h^l\|_S
	\leq
	\|u-\eta_h^l\|_X + \|w-\xi_h^l\|_Y + \|\lambda - \chi_h^l\|_S + \|u\|_X + \|w\|_Y + \|\lambda\|_S.
\]
Thus for sufficiently small $h$, it holds
\begin{align*}
	\|u-u_h^l\|_X + \| w-w_h^l\|_Y + \|\lambda-\lambda_h^l\|_S
	\leq
	C \big[ \|u-\eta_h^l\|_X + \| w-\xi_h^l\|_Y + \|\lambda-\chi_h^l\|_S 
	\\
	+
	h^k\left(\|f\|_{X^*} + \|g\|_{Y^*} + \|s\|_S\right)\big].
\end{align*}
Taking the infimum over $(\eta_h,\xi_h,\chi_h) \in X_h\times Y_h\times S_h$ gives the result.
\end{proof}
In applications one may have interpolation operators which allow an error bound of the form of $Ch^\alpha$ for some $0\leq \alpha \leq k$.
The magnitude of $\alpha$ will depend on the regularity of the solution.
\begin{corollary}\label{Corollary:DiscreteLagrangeEstimates}
%Given Assumptions \ref{ass:AbstractCoercivity}, \ref{kernelInfSup}, \ref{ass:discreteInfSupB} and \ref{ass:DiscreteLagrange}.
Suppose there are Banach spaces $\bar{X}$, $\bar{Y}$, $\bar{S}$ continuously embedded in $X$, $Y$, $S$ respectively with $(u,w,\lambda) \in \bar{X}\times\bar{Y}\times\bar{S}$.
Additionally assume there is $\bar{C},\alpha>0$ independent of $h$ such that
\[
	\inf_{(\eta_h,\xi_h,\chi_h)\in X_h \times Y_h\times S_h}
		\left( \|u - \eta_h^l\|_X + \|w-\xi_h^l\|_Y + \|\lambda - \chi_h^l\|_S \right)
		\leq
		\bar{C}h^\alpha(\|u\|_{\bar{X}} + \|w\|_{\bar{Y}} + \|\lambda\|_{\bar{S}}).
\]
Then for sufficiently small $h$, there is $C>0$ such that
\[
	\|u-u_h^l\|_X + \|w- w_h^l\|_Y + \|\lambda-\lambda_h^l\|_S
	\leq
	C h^{\min(\alpha,k)} (\|u\|_{\bar{X}} + \|w\|_{\bar{Y}} + \|\lambda\|_{\bar{S}} + \|f\|_{X^*} + \|g\|_{Y^*} + \|s\|_{S}).
\]
\end{corollary}
\begin{proposition}\label{prop:DiscreteLagrangeEstimates}
%Given Assumptions \ref{ass:AbstractCoercivity}, \ref{kernelInfSup}, \ref{ass:discreteInfSupB} and \ref{ass:DiscreteLagrange}
Under the assumptions of the above, further suppose there are Hilbert spaces $H$, $J$, $K$ continuously embedded into $X$, $Y$, $S$ respectively.
Let $(\psi,\phi,\omega)\in X\times Y\times S$ be the unique solution to
\begin{align*}
	c(\eta,\psi) + b(\eta,\phi) + (T\eta,\omega)_S &= (u-u_h^l,\eta)_H \quad &\forall \eta \in X,
	\\
	b(\psi,\xi) - m(\phi,\xi) &= (w-w_h^l,\xi)_J \quad &\forall \xi \in Y,
	\\
	(T\psi,z)_S &= (\lambda-\lambda_h^l,z)_K \quad &\forall z\in S.
\end{align*}
%of Problem \ref{lagrangeProblem} with right hand side%%Here is the uncorrected correction
%\[
%	\eta \mapsto ( u-u^l_h,\eta)_H, \quad
%	\xi \mapsto (w-w^l_h,\xi )_J, \quad
%	z \mapsto ( \lambda - \lambda^l_h,z)_K.
%\]
Assume there are Banach spaces $\hat{X}$, $\hat{Y}$, $\hat{S}$ continuously embedded in $X$, $Y$, $S$ respectively, with $(\psi,\phi,\omega)\in \hat{X}\times\hat{Y}\times\hat{S}$ and there is $\hat{C},\beta>0$ such that
\[
	\inf_{(\eta_h,\xi_h,\chi_h)\in X_h \times Y_h\times S_h}
		\left( \|\psi - \eta_h^l\|_X + \|\phi-\xi_h^l\|_Y + \|\omega - \chi_h^l\|_S \right)
		\leq
		\hat{C}h^\beta(\|\psi\|_{\hat{X}} + \|\phi\|_{\hat{Y}} + \|\omega\|_{\hat{S}}).
\]
Finally assume the regularity result of
\[
	\|\psi\|_{\hat{X}} + \|\phi\|_{\hat{Y}} + \|\omega\|_{\hat{S}}
	\leq
	\tilde{C}(\|u-u_h^l\|_H + \|w-w_h^l\|_J + \|\lambda-\lambda_h^l\|_K).
\]
Then for sufficiently small $h$, there is $C>0$ independent of $h$ such that
\[
	\|u-u_h^l\|_H + \|w-w_h^l\|_J + \|\lambda-\lambda_h^l\|_K
	\leq
	Ch^{\min(\alpha+\beta,k)}
	(\|u\|_{\bar{X}} + \|w\|_{\bar{Y}} + \|\lambda\|_{\bar{S}} + \|f\|_{X^*} + \|g\|_{Y^*} + \|s\|_{S}).
\]
\end{proposition}
\begin{proof}
Let $(\psi,\phi,\omega)$ be as above, then by testing the system with $(u-u_h^l,w-w_h^l,\lambda - \lambda_h^l)$ and adding together, one has,
\begin{align*}
( u-u_h^l,&u-u_h^l )_H + ( w-w_h^l,w-w_h^l)_J + ( \lambda-\lambda^l,\lambda-\lambda^l )_K
\\
=
c&(u-u_h^l,\psi-\eta_h^l) + b(u-u_h^l,\phi-\xi_h^l) + (T(u-u_h^l),\omega - \chi_h^l)_S
\\&+
b(\psi-\eta_h^l,w-w_h^l) - m(\phi-\xi_h^l,w-w_h^l)
+
(T(\psi-\eta_h^l),\lambda-\lambda_h^l)_S
\\&+
\langle f,\eta_h^l\rangle + \langle g,\xi_h^l\rangle + (s,\chi_h^l)_S
\\&-
c(u_h^l,\eta_h^l) - b(u_h^l,\xi_h^l) - (Tu_h^l,\chi_h^l)_S
-
b(\eta_h^l,w_h^l) + m(\xi_h^l,w_h^l)
-
(T\eta_h^l,\lambda_h^l)_S
\\
&-
\langle f_h, \eta_h\rangle - \langle g_h, \xi_h\rangle - (s_h,\chi_h)_{S_h}
\\
&+
c_h(u_h,\eta_h) + b_h(u_h,\xi_h) + (T_h u_h,\chi_h)_{S_h}
+
b_h(\eta_h,w_h) - m_h(\xi_h,w_h)
+
(T_h \eta_h, \lambda_h)_{S_h}.
\end{align*}
we see that the first two lines may be bounded by
\[
	(\|\psi-\eta_h^l\|_X + \|\phi-\xi_h^l\|_Y + \|\omega- \chi_h^l \|_S)(\|u-u_h^l\|_X + \|w-w_h^l\|_Y + \|\lambda-\lambda_h^l\|_S),
\]
and the final four lines may be bounded by the approximation properties % (Definition \ref{def:AbstractApproximationProperties})
of the discrete operators.
One then obtains
\begin{align*}
	( u-&u_h^l,u-u_h^l )_H + ( w-w_h^l,w-w_h^l)_J + ( \lambda-\lambda^l,\lambda-\lambda^l )_K
	\\
	\leq&
	C\left[\left(\|\psi-\eta_h^l\|_X + \|\phi-\xi_h^l\|_Y + \|\omega- \chi_h^l \|_S\right) \left(\|u-u_h^l\|_X + \|w-w_h^l\|_Y + \|\lambda-\lambda_h^l\|_S \right) \right.
	\\
	+&\left.h^k \left( \|f\|_{X^*} + \|g \|_{Y^*} + \| s\|_S  \right)\left(\|\psi- \eta_h^l\|_X + \|\phi- \xi_h^l\|_Y + \|\omega-\chi_h^l\|_S + \|\psi\|_X + \| \phi\|_Y + \| \omega \|_S \right) \right].
\end{align*}
By taking infimum over $(\eta_h,\xi_h,\chi_h) \in X_h\times Y_h\times S_h$, one has the result by use of Young's inequality.
%\begin{align*}
%	\|&u-u_h^l\|^2_H+\|w-w_h^l\|^2_J+\|\lambda-\lambda_h^l\|^2_K
%	\\
%	&\leq
%	C (\|u-u_h^l\|_H+\|w-w_h^l\|_J+\|\lambda-\lambda_h^l\|_K)
%	\left[
%		h^{\alpha+ \beta}(\|u\|_{\bar{X}} + \|w \|_{\bar{Y}} + \|\lambda\|_{\bar{S}}) + h^k (\|f\|_{X^*} + \|g\|_{Y^*} + \|s\|)
%	\right].
%\end{align*}
%Thus for sufficiently small $h$, we have the result using Young's inequality.
\end{proof}

\subsection{Finite element method for the penalty problem}
\begin{problem}\label{discretePenaltyProblem}
Given $\epsilon>0$, find $(u_h^\epsilon,w_h^\epsilon) \in X_h \times Y_h$ solving the problem
\begin{align*}
	c_h(u_h^\epsilon,\eta_h) + b(\eta_h,w_h^\epsilon) + \frac{1}{\epsilon}(T_h u_h^\epsilon, T_h \eta_h)_{S_h} &= \langle f_h, \eta_h\rangle + \frac{1}{\epsilon}(s_h,T_h \eta_h)_{S_h} \, &\forall \eta_h \in X_h,
	\\
	b_h(u_h^\epsilon,\xi_h) - m_h(w_h^\epsilon,\xi_h) &= \langle g_h,\xi_h \rangle \, &\forall \xi_h \in Y_h.
\end{align*}
\end{problem}

We now prove well-posedness of this problem and give error estimates.
For the error estimate, there are two obvious ways to proceed, first of all, one might wish to consider proceeding as though this is a problem independent of the hard constraint problem.
%This method then requires the penalty and grid size to be linked.
An alternate approach is to use that the hard constraint problem is well approximated by the penalty problem and show a similar bound for the discrete problems, then use the error estimates for the hard constraint problem.
% shown in Theorem \ref{ThmDiscreteLagrangeProblem}, Corollary \ref{Corollary:DiscreteLagrangeEstimates} and Proposition \ref{prop:DiscreteLagrangeEstimates}.
We start by showing the existence of a solution to Problem \ref{discretePenaltyProblem}.
We suppose that Assumptions \ref{ass:AbstractCoercivity}, \ref{kernelInfSup}, \ref{InfSupSOSCoercivity}, \ref{ass:discreteInfSupB} and \ref{ass:DiscreteLagrange} hold true for the remainder of this subsection.
\begin{theorem}\label{ThmDiscretePenaltyProblem}
	%Given $\epsilon>0$ sufficiently small,
	%Given Assumptions \ref{ass:AbstractCoercivity}, \ref{ass:discreteInfSupB},
	For sufficiently small $h$ and sufficiently small $\epsilon$, there exists a unique solution to Problem \ref{discretePenaltyProblem}.
\end{theorem}
\begin{proof}
Existence and uniqueness follows from the homogeneous case $f_h= g_h =s_h =0$ as the system is linear and finite dimensional.
	In this homogenous case we have that, by testing the first equation of the system with $u_h^\epsilon$ and the second equation of the system with $w_h^\epsilon$ and taking differences,
	\begin{equation}\label{discreteHomogeneousDifference}
		c_h(u_h^\epsilon,u_h^\epsilon) + \frac{1}{\epsilon}(T_h u_h^\epsilon, T_h u_h^\epsilon)_{S_h} + m_h(w_h^\epsilon,w_h^\epsilon) = 0.
	\end{equation}
It follows from Lemma \ref{lemma:discreteCoercivity} that
\begin{align*}
	C\|(w_h^\epsilon)^l\|^2_L
	\leq&
	c(G_h^l m((w^\epsilon_h)^l,\cdot),G_h^l m((w^\epsilon_h)^l,\cdot))
	\\
	&+ \frac{1}{\epsilon_0}(T(G_h^l m((w^\epsilon_h)^l,\cdot)),T(G_h^l m((w^\epsilon_h)^l,\cdot)))_{S}+ m((w_h^\epsilon)^l,(w_h^\epsilon)^l)
	\\
	\leq&
	c((u_h^\epsilon)^l,(u_h^\epsilon)^l) +m((w_h^\epsilon)^l,(w_h^\epsilon)^l)
-
	c_h(u_h^\epsilon,u_h^\epsilon) -m_h(w_h^\epsilon,w_h^\epsilon)
	\\
	&+
	c(G_h^l m((w^\epsilon_h)^l,\cdot),G_h^l m((w^\epsilon_h)^l,\cdot))
	-
	c((u_h^\epsilon)^l,(u_h^\epsilon)^l)
	\\
	&+ \frac{1}{\epsilon_0}(T(G_h^l m((w^\epsilon_h)^l,\cdot)),T(G_h^l m((w^\epsilon_h)^l,\cdot)))_{S} - \frac{1}{\epsilon_0}(T_h u_h^\epsilon,T_hu_h^\epsilon)_{S_h}
	\\
	\leq&
	\tilde{C}h^k \|(w_h^\epsilon)^l\|_L^2,
\end{align*}
as in the proof of Theorem \ref{ThmDiscreteLagrangeProblem}, where we have inequality, rather than equality for
\[
	c_h(u_h^\epsilon,u_h^\epsilon) + m_h(w_h^\epsilon,w_h^\epsilon) + \frac{1}{\epsilon_0} (T_h u_h^\epsilon,T_hu_h^\epsilon)_{S_h} \leq 0
\]
Hence for sufficiently small $h$, it holds that $w_h^l = u_h^l = 0$.
Thus there is a unique solution.
\end{proof}

We now wish to show a discrete version of Proposition \ref{Prop:FirstAbstractEstimate} so that we may use the approximation theory from Theorem \ref{ThmDiscreteLagrangeProblem} to obtain uniform estimates on the solution to Problem \ref{discretePenaltyProblem}.
\begin{lemma}\label{discreteConvergenceInEps}
%Given Assumptions \ref{ass:AbstractCoercivity}, \ref{ass:discreteInfSupB} and \ref{ass:DiscreteLagrange}.
Let $(u_h^\epsilon,w_h^\epsilon)$ solve Problem \ref{discretePenaltyProblem} and let $(u_h,w_h,\lambda_h)$ solve Problem \ref{discreteLagrangeProblem}.
Then for sufficiently small $\epsilon>0$, there is $C>0$ independent of $\epsilon$ and $h$ such that,
\[
	\|u_h^l - (u_h^\epsilon)^l\|_X
	+	\|w_h^l - (w_h^\epsilon)^l\|_Y
	+	\left\lVert \lambda_h^l - {\epsilon}^{-1}T\left((u_h^\epsilon)^l- u_h^l\right)\right\rVert_S
	\leq C \|\lambda_h^l\|_S \sqrt{\epsilon}.
\]
\end{lemma}
\begin{proof}
As previously shown in the proof of Theorem \ref{ThmDiscreteLagrangeProblem},
\[
	\| G_h^lm((w_h^\epsilon)^l -w_h^l,\cdot) - \left( (u_h^\epsilon)^l - u_h^l\right) \|_X
	\leq
	Ch^k (\|u_h^l - (u_h^\epsilon)^l\|_X + \|w_h^l - (w_h^\epsilon)^l\|_L),
\]
where we also know for $h$ sufficiently small,
\[
	\|u_h^l - (u_h^\epsilon)^l\|_X \leq C \|w_h^l - (w_h^\epsilon)^l\|_L.
\]
Adding $({\epsilon}^{-1} - \epsilon_0^{-1})\| T((u_h^\epsilon)^l - u_h^l) \|_S^2$, to the statement of Lemma \ref{lemma:discreteCoercivity}, with the above, yields,
\begin{align*}
	C\|w_h^l - (w_h^\epsilon)^l \|_L^2 +& ({\epsilon}^{-1} - \epsilon_0^{-1})\| T((u_h^\epsilon)^l - u_h^l) \|_S^2
	\\
	\leq&
	c( G_h^lm((w_h^\epsilon)^l - w_h^l, \cdot), G_h^lm((w_h^\epsilon)^l - w_h^l, \cdot) ) + m((w_h^\epsilon)^l - w_h^l,(w_h^\epsilon)^l - w_h^l)
	\\
	&+ {\epsilon}^{-1}(T((u_h^\epsilon)^l - u_h^l),T((u_h^\epsilon)^l - u_h^l))_{S}
%	\\
%	\leq&
%	c((u_h^\epsilon)^l - u_h^l,(u_h^\epsilon)^l - u_h^l) + m((w_h^\epsilon)^l - w_h^l,(w_h^\epsilon)^l - w_h^l)
%	\\
%	&+ {\epsilon}^{-1}(T((u_h^\epsilon)^l - u_h^l),T((u_h^\epsilon)^l - u_h^l))_{S}
%	\\
%	&+ c( G_h^lm((w_h^\epsilon)^l - w_h^l, \cdot), G_h^lm((w_h^\epsilon)^l - w_h^l, \cdot) ) -c((u_h^\epsilon)^l - u_h^l,(u_h^\epsilon)^l - u_h^l)
%	\\
%	\leq&
%	c((u_h^\epsilon)^l - u_h^l,(u_h^\epsilon)^l - u_h^l) + m((w_h^\epsilon)^l - w_h^l,(w_h^\epsilon)^l - w_h^l)
%	\\
%	&+ \epsilon^{-1}(T((u_h^\epsilon)^l - u_h^l),T((u_h^\epsilon)^l - u_h^l))_{S}
%	\\
%	&+C\|G_h^lm((w_h^\epsilon)^l - w_h^l, \cdot) - ((u_h^\epsilon)^l - u_h^l)\|_X(\|(w_h^\epsilon)^l - w_h^l\|_L + \|(u_h^\epsilon)^l - u_h^l\|_X)
	\\
	\leq&
	c((u_h^\epsilon)^l - u_h^l,(u_h^\epsilon)^l - u_h^l) + m((w_h^\epsilon)^l - w_h^l,(w_h^\epsilon)^l - w_h^l)
	\\
	&+ {\epsilon}^{-1}(T((u_h^\epsilon)^l - u_h^l),T((u_h^\epsilon)^l - u_h^l))_{S} +Ch^k\|(w_h^\epsilon)^l - w_h^l\|_L^2.
\end{align*}
For sufficiently small $h$, we may smuggle the $h^k$ terms into the left hand side, giving,
\begin{align*}
	C\|w_h^l - (w_h^\epsilon)^l \|_L^2 &+ ({\epsilon}^{-1} - \epsilon_0^{-1})\| T((u_h^\epsilon)^l - u_h^l) \|_S^2
	\\
	\leq&
	c((u_h^\epsilon)^l - u_h^l,(u_h^\epsilon)^l - u_h^l) + m((w_h^\epsilon)^l - w_h^l,(w_h^\epsilon)^l - w_h^l)
	\\
	&+
	{\epsilon}^{-1}(T((u_h^\epsilon)^l - u_h^l),T((u_h^\epsilon)^l - u_h^l))_{S}
	\\
	\leq&
	c_h(u_h^\epsilon - u_h,u_h^\epsilon - u_h) + m_h(w_h^\epsilon- w_h,w_h^\epsilon - w_h)
	+
	{\epsilon}^{-1}(T_h(u_h^\epsilon - u_h),T_h(u_h^\epsilon - u_h))_{S_h}
	\\
	&+ c((u_h^\epsilon)^l - u_h^l,(u_h^\epsilon)^l - u_h^l) -c_h(u_h^\epsilon - u_h,u_h^\epsilon - u_h) 
	\\
	&+ m((w_h^\epsilon)^l - w_h^l,(w_h^\epsilon)^l - w_h^l) - m_h(w_h^\epsilon- w_h,w_h^\epsilon - w_h)
	\\
	&+
	{\epsilon}^{-1}(T((u_h^\epsilon)^l - u_h^l),T((u_h^\epsilon)^l - u_h^l))_{S} - {\epsilon}^{-1}(T_h(u_h^\epsilon - u_h),T_h(u_h^\epsilon - u_h))_{S_h},
\end{align*}
where the final three lines may be bounded by $Ch^k \left(\|w_h^l - (w_h^\epsilon)^l \|_L^2 +{\epsilon}^{-1}\| T((u_h^\epsilon)^l - u_h^l) \|_S^2 \right)$, which follows from the approximations in Assumption \ref{ass:AbstractApproximationProperties}.
Thus for $h$ sufficiently small, one has
\begin{align*}
	C\left( \|w_h^l - (w_h^\epsilon)^l \|_L^2 + ({\epsilon}^{-1} - \epsilon_0^{-1})\| T((u_h^\epsilon)^l - u_h^l) \|_S^2\right)
	\leq&
	c_h(u_h^\epsilon - u_h,u_h^\epsilon - u_h) + m_h(w_h^\epsilon- w_h,w_h^\epsilon - w_h)
	\\
	&+
	{\epsilon}^{-1}(T_h(u_h^\epsilon - u_h),T_h(u_h^\epsilon - u_h))_{S_h}
	\\
	=&
	(\lambda_h, T_h(u_h^\epsilon - u_h) )_{S_h}
	\leq C_2\|\lambda_h^l\|_S \|T(u_h^\epsilon - u_h)\|_S
	\\
	\leq&
	C_2^2 \epsilon\rho \| \lambda_h^l\|_S^2 + \frac{1}{\epsilon\rho}\|T(u_h^\epsilon - u_h)\|_S^2,
\end{align*}
where the equality follows from the discrete equations  and the final line is Young's inequality.
Thus choosing $\rho$ sufficiently big (independent of $\epsilon$) gives
\[
	\|w_h^l - (w_h^\epsilon)^l \|_L^2 + {\epsilon}^{-1}\| T((u_h^\epsilon)^l - u_h^l) \|_S^2
	\leq
	C\epsilon\|\lambda_h^l\|_S^2.
\]
We now make use of Assumption \ref{ass:DiscreteLagrange},
\begin{align*}
	\tilde{\kappa} (\|w_h^l - &(w_h^\epsilon)^l\|_Y + \|\lambda_h^l - {\epsilon}^{-1}T((u_h^\epsilon)^l - u_h^l) \|_S )
	\\&\leq
	\sup_{(\eta_h, \xi_h) \in X_h\times Y_h}
	\frac{b(\eta_h^l,w_h^l - (w_h^\epsilon)^l) +(T\eta_h^l, \lambda_h^l - {\epsilon}^{-1}T((u_h^\epsilon)^l - u_h^l))_S + m(w_h^l - (w_h^\epsilon)^l,\xi_h^l) }{\|\eta_h^l\|_X + \|\xi_h^l\|_Y}.
\end{align*}
It is clear in the above that the $m$ term is bounded as we would like.
We then have
\begin{align*}
	b(\eta_h^l,w_h^l - &(w_h^\epsilon)^l) + (T\eta_h^l, \lambda_h^l - {\epsilon}^{-1}T((u_h^\epsilon)^l - u_h^l))_S
	\\
	=&
	b_h(\eta_h, w_h^\epsilon -w_h) + (T_h\eta_h, \lambda_h - {\epsilon}^{-1}T_h (u_h^\epsilon - u_h))_{S_h}
	+
	b(\eta_h^l,w_h^l - (w_h^\epsilon)^l) - b_h(\eta_h, w_h^\epsilon -w_h) 
	\\
	&+
	(T\eta_h^l, \lambda_h^l - {\epsilon}^{-1}T((u_h^\epsilon)^l - u_h^l))_S - (T_h\eta_h, \lambda_h - {\epsilon}^{-1}T_h ( u_h^\epsilon - u_h))_{S_h}
	\\
	=&
	c_h(u_h-u_h^\epsilon,\eta_h)
	+
	b(\eta_h^l,w_h^l - (w_h^\epsilon)^l) - b_h(\eta_h, w_h^\epsilon -w_h) 
	\\
	&+
	(T\eta_h^l, \lambda_h^l - {\epsilon}^{-1}T((u_h^\epsilon)^l - u_h^l))_S - (T_h\eta_h, \lambda_h - {\epsilon}^{-1}T_h( u_h^\epsilon - u_h))_{S_h}
	\\
	\leq&
	c_h(u_h-u_h^\epsilon,\eta_h)
	+
	Ch^k\|\eta_h^l\|_X \left( \|w_h^l - (w_h^\epsilon)^l\|_Y + \|\lambda_h^l - {\epsilon}^{-1}T((u_h^\epsilon)^l - u_h^l) \|_S\right).
\end{align*}
Thus, for $h$ sufficiently small
\[
	\|w_h^l - (w_h^\epsilon)^l\|_Y + \|\lambda_h^l - {\epsilon}^{-1}T((u_h^\epsilon)^l - u_h^l) \|_S
	\leq
	C\|u_h^l-(u_h^\epsilon)^l\|_X,
\]
which completes the result.
\end{proof}
This results in the following Theorem.
\begin{theorem}\label{thm:UseApproximationProperties}
%Given Assumptions \ref{ass:AbstractCoercivity}, \ref{kernelInfSup}, \ref{InfSupSOSCoercivity}, \ref{ass:discreteInfSupB} and \ref{ass:DiscreteLagrange}
	Let $(u^\epsilon_h,w_h^\epsilon)$ be the solution to Problem \ref{discretePenaltyProblem}, let $(u^\epsilon,w^\epsilon)$ be the solution to Problem \ref{penaltyProblem} and let $(u,w,\lambda)$ be the solution to problem \ref{lagrangeProblem}.
	Then there is $C>0$ independent of $h$ and $\epsilon$ such that
	\begin{align*}
		\|u^\epsilon-(u_h^\epsilon)^l\|_X + \|w^\epsilon-(w_h^\epsilon)^l\|_Y
		\leq
		C\inf_{(\eta_h,\xi_h)\in X_h \times Y_h}&
		\left( \|u - \eta_h^l\|_X + \|w-\xi_h^l\|_Y \right)
		\\
		&+
		C(h^k+\sqrt{\epsilon}) \left( \|f\|_{X^*} + \|g\|_{Y^*} + \|s\|_S\right).
	\end{align*}
\end{theorem}
\begin{proof}
	We start by considering $u^\epsilon-(u_h^\epsilon)^l = (u^\epsilon - u) + (u - u_h^l) +(u_h^l-(u_h^\epsilon)^l)$ and similarly for $w$ terms.
	From Proposition \ref{Prop:FirstAbstractEstimate} and Lemma \ref{discreteConvergenceInEps}, it holds
	\[
		\|u^\epsilon-(u_h^\epsilon)^l\|_X + \|w^\epsilon-(w_h^\epsilon)^l\|_Y
		\leq
		C \sqrt{\epsilon}(\|\lambda\|_S+\|\lambda_h^l\|_S) + \|u - u_h^l\|_X + \|w-w_h^l\|_Y,
	\]
	which combined with Theorem \ref{ThmDiscreteLagrangeProblem} gives the result.
\end{proof}

If $(u^\epsilon,w^\epsilon)$ were to be sufficiently more regular than $(u,w,\lambda)$, one may wish to use this extra regularity to pay for the $\epsilon$ cost and obtain higher order convergence than would be attained from Proposition \ref{prop:DiscreteLagrangeEstimates}.

\section{Surface calclus and finite elements}
We recall some definitions and results from surface PDE and surface finite element methods, for full details, the reader is referred to \cite{DziEll13}.
\subsection{Surface calculus}%\marginpar{how much is needed? - probably rewrite to make more relevant to this problem}
Let $\Gamma$ be a closed $C^k$ hypersurface in $\R^3$ where $k$ is as large as needed but at most $4$ and at least $2$.
There is a bounded domain $U\subset\R^3$ such that $\partial U = \Gamma$.
The unit normal $\nu$ to $\Gamma$ that points out of this domain $U$ is called the outwards unit normal.
We write $P_\Gamma := I - \nu \otimes \nu$ on $\Gamma$ to be, at each point on $\Gamma$, the projection onto the tangent space of $\Gamma$ at that particular point, where we are writing $I$ to be the $3\times 3$ identity matrix.
For a differentiable function $f$ on $\Gamma$ we define the surface gradient by
\[
	\nabla_\Gamma f := P_\Gamma \nabla \bar{f},
\]
where $\bar{f}$ is a differentiable extension of $f$ to an open neighbourhood of $\Gamma$.
Here $\nabla$ denotes the standard gradient in $\R^3$.
This given definition of the surface gradient depends only on the values of $f$ on $\Gamma$, this is shown in \cite[Lemma 2.4]{DziEll13}.
For a twice differentiable function, the Laplace-Beltrami operator is defined by,
\[
	\Delta_\Gamma f := \nabla_\Gamma \cdot \nabla_\Gamma f.
\]
For convenience we use the following inner product
\[
	(u,v)_{H^2(\Gamma)} := \int_\Gamma \Delta_\Gamma u \Delta_\Gamma v + \nabla_\Gamma u \cdot \nabla_\Gamma u + uv.
\]
Note that this is not the standard inner product on $H^2(\Gamma)$ which contains mixed derivatives.
On closed surfaces however, this is seen to be an equivalent norm, see \cite{DziEll13} for details.
\subsection{Surface finite elements}
We assume that the surface $\Gamma$ is approximated by a polyhedral hypersurface
\[
	\Gamma_h = \bigcup_{T\in \mathcal{T}_h} K,
\]
where $\mathcal{T}_h$ is a set of two-dimensional simplices in $\R^3$ which form an admissible triangulation.
For $K\in \mathcal{T}_h$ the diameter of $K$ is $h(K)$ and the radius of the largest ($2$-dimensional) ball contained in $T$ is $\rho(K)$.
Set $h:= \max_{K\in\mathcal{T}_h} h(T)$ and assume the ratio between $h$ and $\rho(K)$ is bounded independently of $h$.
We assume that $\Gamma_h$ is contained within a narrow strip $\mathcal{N}_\delta$ of width $\delta>0$ around $\Gamma$ on which the decomposition
\[
	x= p + d(x)\nu(p), ~~ p\in\Gamma
\]
is unique for all $x \in \mathcal{N}_\delta$.
Here, $d(x)$ denotes the oriented distance function to $\Gamma$, see \cite[Section 2.3]{DziEll13}.
This defines a map $x\mapsto p(x)$ from $\mathcal{N}_\delta$ to $\Gamma$.
We assume that the restriction $p|_{\Gamma_h}$ of this map on the polyhedral surface is a bijection between $\Gamma_h$ and $\Gamma$.
In addition the vertices of $K\in \mathcal{T}_h$ should lie on $\Gamma$.

The piecewise affine Lagrange finite element space on $\Gamma_h$ is
\[
	\mathcal{S}_h := \{\chi \in C(\Gamma_h) \,:\, \chi|_K \in P^1(K)\, \forall K \in \mathcal{T}_h\},
\]
where $P^1(K)$ is the set of polynomials of degree 1 or less on $K$.
The Lagrange basis functions $\phi_i$ of this space are uniquely determined by their values at the so-called Lagrange nodes $q_j$, that is $\phi_i(q_j) = \delta_{ij}$.
The associated Lagrange interpolation for a continuous function $f$ on $\Gamma_h$ is defined by
\[
	I_h f:= \sum_i f(q_i) \phi_i.
\]
We now introduce the lifted discrete spaces.
We use the standard lift operator as constructed in \cite[Section 4.1]{DziEll13}.
The lift $f^l$ of a continuous function $f \colon \Gamma_h \to \R$ onto $\Gamma$ is defined by
\[
	f^l(x) := (f\circ p|_{\Gamma_h}^{-1})(x)
\]
for all $x \in \Gamma$.
The inverse map $g^{-l}$ for a continuous function $g \colon \Gamma \to R$ onto $\Gamma_h$ is given by $g^{-l} := g\circ p$.
The lifted finite element space is
\[
	\mathcal{S}_h^l := \{ \chi^l \,|\, \chi \in S_h\}.
\]
With the lifted Lagrange interpolation $I_h^l \colon C(\Gamma)\to \mathcal{S}_h^l$ given by $I_h^l(f) := (I_h f^{-l})^l$.
The lifted discrete spaces satisfy the conditions in Assumption \ref{ass:discreteInfSupB} when $X_h^l = Y_h^l := \mathcal{S}_h^l$, or specifically, for a sequence of triangulated surfaces $(\Gamma_{h_h})_{n\in\mathbb{N}}$ with $h_n \to 0$ as $n \to \infty$ we have $X_n:= X_{h_n}^l = \mathcal{S}_h^l$ and $Y_n:= Y_{h_n}^l = \mathcal{S}_h^l$.

\section{PDE examples}\label{Sec:SurfaceApplication}

\subsection{A near-spherical biomembrane}\label{subsec:NearSphericalBiomembrane}
We now discuss the second order splitting associated to a fourth order linear PDE which arises in the modelling of biomembranes on near spherical domains, in particular, we now set $\Gamma = \mathbb S^2(0,R)$ for some fixed $R>0$.
The model is based on the deformations of the membrane due to small external forcing.
A full derivation may be found in \cite{EllFriHob17}, see also \cite{EllHatHer19}.
Here we are considering that the only contributing deformation is due to  $N$ point constraints or point penalties.

We now fix $N\in \mathbb{N}$ and distinct $X_i \in \Gamma$ for $i=1,...,N$, with $S:= \mathbb R^N$.
%  and $T$ is the evaluation map at  these $N$ points, $\{X_1,X_2,...,X_N\}$.
We set $\kappa >0$ and $\sigma \geq 0$.
First we define the energies and then  give the hard constraint (Lagrange) problem and a soft constraint (penalty) problem.

\subsubsection{Fourth order formulation}
\begin{definition}\label{def:4OrderSurface}%[Define the $H^2$ bilinear forms]
We define $a\colon H^2(\Gamma) \times H^2(\Gamma) \to \R$ by
\[
	a(u,v) := \int_\Gamma \kappa \Delta_\Gamma u \Delta_\Gamma v +\left(\sigma -\frac{2\kappa}{R^2} \right) \nabla_\Gamma u \cdot \nabla_\Gamma v - \frac{2\sigma}{R^2}uv,
\]
 $T\colon C(\Gamma) \to \R^N$  by
\[
	Tu = (u(X_i))_{i=1}^N,
\]
and for any $\epsilon>0$, $a_\epsilon\colon H^2(\Gamma) \times H^2(\Gamma) \to \R$ by
\[
	a_\epsilon (u,v) := a(u,v) +  \frac{1}{\epsilon}(Tu , Tv)_{\R^N}.
\]
\end{definition}
We notice that over $H^2(\Gamma)$ that neither $a_\epsilon$ nor $a$ are necessarily coercive, however, in \cite{EllFriHob17} it is seen that they are coercive over $\{1,\nu_1,\nu_2,\nu_3\}^\perp$, where $\nu= \frac{x}{R}$ is the unit normal to $\Gamma$ and $\perp$ is meant in the sense of $H^2(\Gamma)$.
This is a consequence of the fact that $1,\nu_1,\nu_2,\nu_3$ are eigenfunction of $-\Delta_\Gamma$.
Furthermore, under suitable conditions on the location of the points $\{X_i\}_{i=1}^N$ we show in the following proposition that  both $a$ and $a_\epsilon$ are coercive over $\{1\}^\perp$.
We use the following notation, $U:= \{ v \in H^2(\Gamma) : \int_\Gamma v = 0\}$ and $U_0:= \{ v \in U : Tv = 0\}$.
\begin{proposition}\label{prop:SurfaceCoercivity}
	Suppose $N\geq4$ and $\{X_i\}_{i=1}^N$ do not lie in the same plane.
	Then there is $\epsilon_0>0$ and $C>0$ such that for any $\epsilon<\epsilon_0$
	\begin{align*}
		a_\epsilon(\eta,\eta) \geq& C \|\eta\|_{2,2}^2 ~~ \forall \eta \in U,
		\\
		a(\eta,\eta) \geq& C\|\eta\|_{2,2}^2 ~~ \forall \eta \in U_0.
	\end{align*}
\end{proposition}
\begin{proof}
	We notice that for $u\in U_0$, it holds that $a_\epsilon(u,u) = a(u,u)$, thus we need only show the first result.
	In \cite[Proposition 4.4.2]{Hob16}, it is shown that $a_\epsilon$ is coercive over $\big( \mbox{Sp}\{ 1,\nu_1,\nu_2,\nu_3\} \cap \Ker(T)\big)^{\perp} \cap U$.
	Thus it is sufficient to show that  $\left( \mbox{Sp} \{ 1,\nu_1,\nu_2,\nu_3\} \cap \Ker(T)\right) = \{0\}$.
	Let $v \in {\rm Sp}\{1,\nu_1,\nu_2,\nu_3\}\cap \Ker(T)$, one has that
	\[
		v = \alpha_0 + \sum_{j=1}^3 \alpha_j \nu_j.
	\]
	By making note that $\nu_j(x) = \frac{x_{j}}{R}$ we see that $v$ is an affine function.
	The condition $v \in \Ker(T)$ gives that the $X_i$ are in the zero level set of the affine function $v$.
	Thus the points must lie in the same plane or $v\equiv 0$.
%	with
%	\[
%		\alpha_0 + \sum_{j=1}^N \alpha_j \nu_j(X_i) = 0 ~~ \forall i =1,...,N.
%	\]
%	Therefore the $X_i$% gives that $X_i$ are in the zero level-set of $v$, but as $v$ is an affine function, gives that the points must lie in the same plane, or $v \equiv 0$, hence the result.
\end{proof}
From now on, we assume that $\{X_i\}_{i=1}^N$ do not lie in the same plane. Notice that for $f = 0$, the following problems are the membrane problems in \cite{EllFriHob17,EllHatHer19}.
\begin{problem}\label{ProblemLagrangeApplication}
Given $f \in (H^2(\Gamma))^*$, find $u \in U$ minimising $\frac{1}{2} a(u,u) - \langle f, u\rangle$ subject to $u(X_i) = Z_i$ for $i=1,...,N$.
This has the variational formulation of finding $u \in U$ such that $u(X_i)=Z_i$ for $i=1,...,N$ and
\[
	a(u,\eta) = \langle f, \eta\rangle ~~ \forall \eta \in U_0.
\]
\end{problem}
\begin{problem}\label{ProblemPenaltyApplication}
Given $f \in (H^2(\Gamma))^*$, find $u^\epsilon \in U$ minimising $\frac{1}{2} a(u^\epsilon,u^\epsilon) + \frac{1}{2\epsilon}|Tu^\epsilon - Z|^2 - \langle f, u^\epsilon \rangle$.
This has the variational formulation of finding $u^\epsilon \in U$ such that
\[
	a_\epsilon(u^\epsilon,\eta) = \frac{1}{\epsilon} (Z,T\eta)_{\R^N} + \langle f, \eta \rangle ~~ \forall \eta \in U.
\]
\end{problem}
\begin{theorem}
	There are unique solutions to both Problems \ref{ProblemLagrangeApplication} and \ref{ProblemPenaltyApplication}.
\end{theorem}
\begin{proof}
This is an application of Lax-Milgram with the coercivity of the bilinear forms shown in Proposition \ref{prop:SurfaceCoercivity}.
\end{proof}

In order to write down the PDE associated to these problems we need to extend the variational formulation to be posed over the whole of $H^2(\Gamma)$.  Standard arguments yield that the solution of Problem \ref{ProblemLagrangeApplication} solves
\[
	a(u,\eta) + (\lambda,T\eta)_{\R^N} + \bar{p}\int_\Gamma \eta = \langle f,\eta\rangle ~~ \forall \eta \in H^2(\Gamma).
\]
and
\[
	a(u,\eta) + (\lambda, T \eta)_{\R^N} = \langle f,\eta\rangle~~ \forall \eta \in U.
\]
Thus by considering smooth test functions, this gives the distributional PDE
	\begin{align*}
		\kappa \Delta_\Gamma^2 u- \left( \sigma - \frac{2\kappa}{R^2} \right)\Delta_\Gamma u - \frac{2\sigma}{R^2} u + \bar{p} + \sum_{i=1}^N \lambda_i \delta_{X_i}=& f~\mbox{in}~ \Gamma,
		\\
		\int_\Gamma u = 0, ~ u(X_i)=&Z_i~\mbox{for}~i=1,...,N.
	\end{align*}
It is useful to to note the following.
\begin{proposition}\label{prop:regularityOfSol}
The unique solution of Problem \ref{ProblemLagrangeApplication}, $u$, satisfies $u\in  W^{3,p}(\Gamma),\, p\in (1,2)$ and is given by $$u = u_f + \sum_{k=1} Z_k \phi_k.$$ 
where
\begin{itemize}
\item	

  For $k=1,...,N$,  the unique  $\phi_k\in U$ $\phi_k(X_j) = \delta_{jk}$ for $j=1,..,N$ and
\[
	a(\phi_k,\eta) = 0 ~~\forall \eta \in U_0
\]
and the unique  $u_f \in U_0$ such that
\[
	a(u_f,\eta) = \langle f, \eta \rangle ~~ \forall \eta \in U_0.
\]

\item
For each $k=1,...,N$,
\[
	\lambda_k = \langle f,\phi_k\rangle - a(u_f,\phi_k) - \sum_{j=1}^N Z_j a(\phi_j,\phi_k)
\]

and $$\bar{p} = \langle f,\phi_0 \rangle - a(u_f,\phi_0)- \sum_{k=1}^N Z_k a(\phi_k,\phi_0)$$
where $\phi_0$ uniquely satisfies  $\phi_0(X_j) = 0$ for $j=1,...,N$, $\int_\Gamma \phi_0 =1$,
\[
	a(\phi_0,\eta) = 0 ~~ \forall \eta \in U_0.
\]
\end{itemize}

\end{proposition}
\begin{proof} The formulae for the solution are easily verified.
	Since $\bar p,\lambda$ are bounded in terms of the data, regularity for this fourth order equation on the sphere yields $u\in W^{3,p}(\Gamma), p\in (1,2)$, following the arguments for fourth order equations in the flat case \cite{Cas85,Hob16}.
\end{proof}

\begin{remark}
	For the variational problem with penalty, the variational formulation over the whole domain follows similarly, yielding the distributional PDE,
\begin{align*}
	\kappa \Delta_\Gamma^2 u^\epsilon- \left( \sigma - \frac{2\kappa}{R^2} \right)\Delta_\Gamma u^\epsilon - \frac{2\sigma}{R^2} u + \bar{p}^\epsilon + \frac{1}{\epsilon}\sum_{i=1}^N u^\epsilon\delta_{X_i}&= f + \frac{1}{\epsilon}\sum_{i=1}^n  Z_i\delta_{X_i}~\mbox{on}~ \Gamma,
		\\
	\int_\Gamma u^\epsilon &= 0.
\end{align*}
\end{remark}

\subsubsection{Second order splitting} 
For certain boundary value problems, splitting a fourth order equation into two second order equations is a natural approach,  c.f. \cite{EllFreMil89}. Here it is convenient to use an auxiliary variable $w=-\Delta_\Gamma u+u$ leading to the following coupled system holding on $\Gamma$

\begin{equation}\label{secondordersplitting}
	\begin{split}
		-\Delta_\Gamma w+w-\left(\frac{\sigma}{\kappa}-\frac{2}{R^2}-2\right)\Delta_\Gamma u-\left(\frac{2\sigma}{\kappa R^2}+1\right)u &=\frac{1}{\kappa}\left(f-\bar p+\Sigma_{i=1}^N\lambda_i\delta_{X_i}\right),\\
		-\Delta_\Gamma u+u &=w.
		\end{split}
	\end{equation}
Taking $u\in W^{3,p}(\Gamma)$ as the solution of Problem \ref{ProblemLagrangeApplication}, because of the Dirac measures on the right hand side we  pose the first equation weakly in the dual of $W^{1,q}(\Gamma)$ with $q\in (2,\infty)$ and $u\in W^{1,q}(\Gamma)$ with $\frac{1}{p}+\frac{1}{q}=1$. The second equation is posed in the dual of $W^{1,p}(\Gamma)$. 
It is clear by testing \eqref{secondordersplitting} by $\eta \in X$ and $\xi \in Y$ that the PDE system may be posed as in \eqref{eq:FirstStatement} using  Definition \ref{def:applicationBilinearForms}.

\begin{definition}\label{def:applicationBilinearForms}

Let $\infty>q>2>p>1$, then we define the spaces
\begin{align*}
	X=&\left\{\eta \in W^{1,q}(\Gamma) \,:\, \int_\Gamma\eta =0 \right\},
	\\
	Y=&\left\{\xi \in W^{1,p}(\Gamma) \,:\, \int_\Gamma\xi=0\right\},
\end{align*}
$L=L^2(\Gamma)$ and $S = \R^N$, with the bilinear forms,
\begin{align*}
	c\colon& W^{1,q}(\Gamma) \times W^{1,q}(\Gamma) \to \R,
	\\
	b\colon& W^{1,q}(\Gamma) \times W^{1,p}(\Gamma) \to \R,
	\\
	m\colon& L^{2}(\Gamma) \times L^{2}(\Gamma) \to \R,
\end{align*}
given by
\begin{align*}
	c(u,\eta)
	&=
	\int_\Gamma \left(\frac{\sigma}{\kappa} - 2 - \frac{2}{R^2}\right) \nabla_\Gamma u \cdot \nabla_\Gamma \eta
	-
	\left(1+\frac{2\sigma}{\kappa R^2}\right)u\eta,
	\\
	b(\eta,\xi)
	&=
	\int_\Gamma \nabla_\Gamma \eta \cdot \nabla_\Gamma \xi + \eta\xi,
	\\
	m(\eta,\xi) &= \int_\Gamma \eta\xi
\end{align*}
and the linear operator $T:X\rightarrow S$ by
\[T\eta:=(\eta(X_1),\eta(X_2),...\eta(X_N)),~\eta\in X.\]
\end{definition}

\subsubsection{Verification of Assumptions \ref{ass:AbstractCoercivity}, \ref{InfSupSOSCoercivity}, \ref{ass:discreteInfSupB} and \ref{kernelInfSup} }
We begin with the following three results which are shown in \cite[Section 5]{EllFriHob19}.

\begin{lemma}\label{lem:surfInfSup}
Let $1<p\leq 2 \leq q < \infty$ with $p,q$ conjugate.
There is $\beta,\gamma >0$ such that
\[
	\beta\|\eta\|_{1,q} \leq \sup_{\xi \in W^{1,p}(\Gamma)} \frac{b(\eta,\xi)}{\|\xi\|_{1,p}} \quad \forall \eta \in W^{1,q}(\Gamma)
	\quad \text{and} \quad
	\gamma\|\xi\|_{1,p} \leq \sup_{\eta \in W^{1,q}(\Gamma)} \frac{b(\eta,\xi)}{\|\eta\|_{1,q}} \quad \forall \xi \in W^{1,p}(\Gamma).
\]
\end{lemma}
\begin{lemma}\label{lem:surfRitz}
Let $1<r\leq \infty$.
Then there is a bounded (independently of $h$) linear map\newline $\Pi_h\colon W^{1,r}(\Gamma) \to \mathcal{S}_h^l$ given by
\[
	b(\Pi_h \phi,v_h^l) = b(\phi,v_h^l) \quad \forall v_h^l \in \mathcal{S}_h^l.
\]
It also holds that
\[
	\sup_{\psi \in W^{1,r}(\Gamma)} \frac{\|\psi - \Pi_h\psi\|_{0,2}}{\|\psi\|_{1,r}} \to 0 \quad \text{as} \quad h\to 0.
\]
\end{lemma}
\begin{lemma}\label{lem:surfDiscInfSup}
Let $1<p\leq 2 \leq q < \infty$ with $p,q$ conjugate.
There is $\tilde{\beta},\tilde{\gamma} >0$ such that
\[
	\tilde{\beta}\|\eta_h^l\|_{1,q} \leq \sup_{\xi_h^l \in \mathcal{S}_h^l} \frac{b(\eta_h^l,\xi_h^l)}{\|\xi_h^l\|_{1,p}}\quad \forall \eta_h^l \in \mathcal{S}_h^l
	\quad \text{and} \quad
	\tilde{\gamma}\|\xi_h^l\|_{1,p} \leq \sup_{\eta_h^l \in \mathcal{S}_h^l} \frac{b(\eta_h^l,\xi_h^l)}{\|\eta_h^l\|_{1,q}}\quad \forall \xi_h^l \in \mathcal{S}_h^l.
\]
\end{lemma}
In order to prove well-posedness of the problem with penalty, we are left to show Assumptions \ref{InfSupSOSCoercivity}, \ref{ass:discreteInfSupB} and \ref{ass:AbstractCoercivity} in the appropriate spaces.
We make use of Fortin's criteria with the projection to mean-value-free functions and Lemmas \ref{lem:surfInfSup} and \ref{lem:surfDiscInfSup}.
The following Proposition gives Assumptions \ref{InfSupSOSCoercivity} and \ref{ass:discreteInfSupB}.

\begin{proposition}\label{lem:surfaceMeanValueWork}
Assumptions \ref{InfSupSOSCoercivity} and \ref{ass:discreteInfSupB} hold true.
%	Lemmas \ref{lem:surfInfSup} and \ref{lem:surfDiscInfSup} hold with $W^{1,q}(\Gamma)$ and $W^{1,p}(\Gamma)$ being 
%	replaced with $X$ and $Y$ respectively and $\mathcal{S}_h^l$ replaced $\mathcal{S}_h^l \cap X$ and $\mathcal{S}_h^l \cap Y$ as appropriate.
%	It can also be seen that $\Pi_h$ takes $X$ to $X\cap \mathcal{S}_h^l$, and similarly $Y$ to $Y\cap \mathcal{S}_h^l$.
\end{proposition}
\begin{proof}
	%For Lemma \ref{lem:surfInfSup} and \ref{lem:surfDiscInfSup}, it
	This follows from an application of Fortin's criterion \cite[Lemma 4.19]{ErnGue13} to Lemmas \ref{lem:surfInfSup} and \ref{lem:surfDiscInfSup} with the projection map $\bar{P}\colon u \mapsto u - \frac{1}{|\Gamma|}\int_\Gamma u$.
\end{proof}

The final condition we need to check is the coercivity like condition, Assumption \ref{ass:AbstractCoercivity}.
\begin{lemma}\label{lem:surfSoSCoercivity}
	Let $\epsilon>0$ sufficiently small and assume $(u,w) \in X \times Y$ satisfy
	\[
		b(u,\xi) = m(w,\xi)~~ \forall \xi \in Y.
	\]
	Then there is $C>0$ such that
	\[
		C\|w\|_{0,2}^2 \leq c(u,u) + m(w,w) + \frac{1}{\kappa\epsilon}(Tu,Tu)_{\R^N}.
	\]
\end{lemma}
\begin{proof}
	The condition $b(u,\xi) = m(w,\xi)~~ \forall \xi \in Y$ with $(u,w) \in X\times Y$ implies that $b(u,\xi) = m(w,\xi)~~ \forall \xi \in W^{1,p}(\Gamma)$.
	Elliptic regularity gives $-\Delta_\Gamma u \in W^{1,p}(\Gamma)$ and $-\Delta_\Gamma u + u = w$.
	Making use of this relation in $c(u,u)+m(w,w)$ gives
	\[
		c(u,u) + m(w,w) + \frac{1}{\kappa\epsilon}(Tu,Tu)_{\R^N}
		=
		\int_\Gamma \left( \left(\Delta_\Gamma u\right)^2 + \left(\frac{\sigma}{\kappa} - \frac{2}{R^2}\right)|\nabla_\Gamma u|^2 -\frac{2\sigma}{R^2}u^2 \right)+ \frac{1}{\kappa\epsilon}(Tu,Tu)_{\R^N},
	\]
	which is bounded below by $C\|u\|_{2,2}^2$, as shown in Proposition \ref{prop:SurfaceCoercivity}.
	Elliptic regularity applied to the condition $b(u,\xi) = m(w,\xi)~ \forall \xi \in W^{1,p}(\Gamma)$ gives $\|w\|_{0,2}\leq C\|u\|_{2,2}$ to complete the proof.
\end{proof}
\begin{lemma}\label{lem:SurfaceLagrangeInfSup}%\marginpar{$\exists$ alternative proof using uniform bounds on the penalised (4th order) problem, might be nicer}
	There is $\alpha>0$ such that for any $(u,w)\in X_0 \times Y $ it holds that
	\[
		\alpha(\|u\|_{1,q} + \|w\|_{1,p} ) \leq \sup_{(\eta,\xi) \in X_0 \times Y} \frac{c(u,\eta) + b(\eta,w) + b(u,\xi) - m(w,\xi)}{\|\eta\|_{1,q} + \|\xi\|_{1,p} }.
	\]
\end{lemma}
\begin{proof}
It is sufficient \cite{ErnGue13} to show for any $(\alpha,\beta,\tilde{Z},\tilde{f},\tilde{g}) \in \R\times\R\times\R^N\times W^{1,q}(\Gamma)^* \times W^{1,p}(\Gamma)^*$, the following system has a unique solution
\begin{equation}\label{eq:LemmaInfSupDummy}\begin{split}
	c(u,\eta) + b(\eta,w) + (\lambda,T\eta)_{\R^N} &= \langle \tilde{f}, \eta \rangle \quad \forall \eta \in X,
	\\
	b(u,\xi) - m(w,\xi) &= \langle \tilde{g}, \xi \rangle \,\quad \forall \xi \in Y,
	\\
	Tu &= \tilde{Z}, \\
	\int_\Gamma u &= \alpha, \\
	\int_\Gamma w &= \beta. \end{split}
\end{equation}
We are able to find unique $u_1 \in H^2(\Gamma)$ with $Tu_1 = \tilde{Z}$, $\int_\Gamma u =\alpha$ and $a(u_1,v) = \langle \tilde{f},v\rangle$ for all $v \in H^2(\Gamma)$ with $\int_\Gamma v = 0$ and $Tv =0$.
In particular, as discussed in Subsection \ref{subsec:NearSphericalBiomembrane},% it holds \cite{ErnGue13}%\marginpar{some Lagrange multiplier theory} that there is $(\lambda,\bar{p}) \in \R^n \times \R$ such that
\[
	a(u_1,v) + (\lambda_1,Tv)_{\R^n} +\bar{p}_1 \int_\Gamma v = \langle \tilde{f},v\rangle ~~ \forall v \in H^2(\Gamma).
\]
From this formulation, it then follows that $-\Delta_\Gamma u_1 \in W^{1,p}(\Gamma)$ as in Proposition \ref{prop:regularityOfSol}.
% by the methods of \cite{Cas85}, the details of which are to be made explicit in a future work by the authors.
Defining $w_1 := -\Delta_\Gamma u_1 + u_1 + \frac{\beta - \alpha}{|\Gamma|} \in W^{1,p}(\Gamma)$, we see that we have a solution to the problem
\begin{equation}\label{eq:LemmaInfSupDummy2}\begin{split}
	c(u_1,\eta) + b(\eta,w_1) + (\lambda_1,T\eta)_{\R^N} &= \langle \tilde{f}, \eta \rangle \quad \forall \eta \in X,
	\\
	b(u_1,\xi) - m(w_1,\xi) &= 0 \qquad\quad \forall \xi \in Y,
	\\
	Tu_1 &= \tilde{Z}, \\
	\int_\Gamma u_1 &= \alpha,\\
	\int_\Gamma w_1 &= \beta.
	\end{split}
\end{equation}
We still require an inhomogeneity for the second equation.
For $G\colon (W^{1,p}(\Gamma))^*\to W^{1,q}(\Gamma)$ such that $b(G\tilde{g},\xi) = \langle \tilde{g}, \xi\rangle~~\forall \xi \in W^{1,p}(\Gamma)$, we may find $(\tilde{u},w,\lambda)$ such that
\begin{align*}
c(\tilde{u},\eta) + b(\eta,w) + (T\eta, \lambda) &= \langle \tilde{f},\eta \rangle - c(G\tilde{g}, \eta)\quad \forall \eta \in X,
\\
b(\tilde{u},\xi) - m(w,\xi) &= 0 \quad \forall \xi \in Y,
\\
T\tilde{u} &= \tilde{Z} - TG\tilde{g},
\\
\int_\Gamma \tilde{u} &= \alpha - \int_\Gamma G\tilde{g},
\\
\int_\Gamma w &= \beta,
\end{align*}
by defining $u := \tilde{u} + G\tilde{g}$ we have $(u,w,\lambda)$ uniquely satisfies \eqref{eq:LemmaInfSupDummy},
%\begin{align*}
%	c(u,\eta) + b(\eta,w) + (\lambda,T\eta)_{\R^N} &= \langle f, \eta \rangle \quad &\forall \eta \in X,
%	\\
%	b(u,\xi) - m(w,\xi) &= \langle g, \xi\rangle \quad &\forall \xi \in Y,
%	\\
%	Tu &= Z, &\\
%	\int_\Gamma u &= \alpha, &\\
%	\int_\Gamma w &= \beta, &
%\end{align*}
which completes the result.
\end{proof}

\subsubsection{Well posedness}

We are now able to prove well-posedness of the following problems which represent  a generalisation of Problems \ref{ProblemLagrangeApplication} and \ref{ProblemPenaltyApplication} by the inclusion of $g$ in the right hand side of the second equation.

\begin{problem}\label{prob:LagrangeSurface}
	Given $f \in (W^{1,q}(\Gamma))^*$ and $g \in (W^{1,p}(\Gamma))^*$, find $(u,w,\lambda)\in X \times Y \times \R^N$ such that
	\begin{align*}
		c(u,\eta) + b(\eta,w) + (T\eta,\lambda )_{\R^N} &= \langle f, \eta \rangle \quad &\forall \eta \in X,
		\\
		b(u,\xi) - m(w,\xi) &= \langle g, \xi\rangle \quad &\forall \xi \in Y,
		\\
		Tu &= Z.
	\end{align*}
\end{problem}
\begin{problem}\label{Prob:SurfacePenalty}
	Given $f\in (W^{1,q}(\Gamma))^*$, $g\in (W^{1,p}(\Gamma))^*$ and $\epsilon>0$, find $(u^\epsilon,w^\epsilon) \in X\times Y$ such that
	\begin{align*}
		c(u^\epsilon,\eta) + b(\eta,w^\epsilon) + \frac{1}{\epsilon}(Tu^\epsilon,T\eta)_{\R^N} &= \langle f, \eta \rangle + \frac{1}{\epsilon}(Z,T\eta)_{\R^N}\quad &\forall \eta \in X,
		\\
		b(u^\epsilon,\xi) - m(w^\epsilon,\xi) &= \langle g, \xi\rangle \quad &\forall \xi \in Y.
	\end{align*}
\end{problem}
%\begin{comment}%%% ****
\begin{remark}
If one were to suppose that Problems \ref{ProblemLagrangeApplication} and \ref{ProblemPenaltyApplication} have right hand side forcing $F \in \left(H^2(\Gamma) \right)^*$, where $F = f - \Delta_\Gamma g + g$.
Then, formally, the splitting $w:= -\Delta_\Gamma u + u - g$ gives rise to Problems \ref{prob:LagrangeSurface} and \ref{Prob:SurfacePenalty}.
This may be interpreted as a decomposition of the data $F$ into "smooth" and "singular" components.
\end{remark}

\begin{theorem}
	There is a unique solution to Problem \ref{Prob:SurfacePenalty}.
\end{theorem}
\begin{proof}
	The proof is an application of Theorem \ref{Thm:PenaltyProblem} as we have shown Assumptions \ref{ass:AbstractCoercivity}, \ref{InfSupSOSCoercivity} and \ref{ass:discreteInfSupB} in Propositions \ref{prop:SurfaceCoercivity} and \ref{lem:surfaceMeanValueWork}.%\marginpar{Have we actually shown the assumptions hold true?!}
\end{proof}

We now wish to show the appropriate assumptions for the well-posedness of the following problem.

%We are unable to proceed in the same way as done for the penalty method to prove well-posedness.

\begin{theorem}
	There is unique solution to Problem \ref{prob:LagrangeSurface}.
\end{theorem}
\begin{proof}
	This follows directly from Lemma \ref{lem:SurfaceLagrangeInfSup} as this proves that Assumption \ref{kernelInfSup} holds true.%, uniqueness follows from the coercivity relation shown in Lemma \ref{lem:surfSoSCoercivity}.
\end{proof}

From Propositions \ref{Prop:FirstAbstractEstimate} and \ref{prop:increasedEpsilonConvergence}, we have the following result.
\begin{corollary}\label{cor:ApplicationEpsilonConvergence}
Let $(u,w,\lambda)$ solve Problem \ref{prob:LagrangeSurface} and $(u^\epsilon,w^\epsilon)$ solve Problem \ref{Prob:SurfacePenalty}.
Then there is $C>0$ such that
\[
	\|u-u^\epsilon\|_X + \|w-w^\epsilon\|_L+ \sqrt{\epsilon}\|w-w^\epsilon\|_Y + \sqrt{\epsilon}\|{\epsilon}^{-1} (Tu^\epsilon - Z)-\lambda\|_{\R^N} \leq C\epsilon\|\lambda\|_{\R^N}.
\]
\end{corollary}

\subsection{A near flat biomembrane}\label{subsec:NearFlatBiomembrane}
We give a flavour of how this same theory may be applied to the case of the Monge-Gauge.
The Monge-Gauge is studied in \cite{EllGraHob16} and it is noted that it is a geometric linearisation of the Canham-Helfrich energy or indeed, formally, the limit as $R \to \infty$ in the $a$ given in Definition \ref{def:4OrderSurface}.
Let $\Omega$ be a smooth bounded domain in $\R^2$, and fix $\kappa >0$ and $\sigma \geq 0$.
Fix $N \in \mathbb{N}$ and distinct $X_i \in \Omega$ for $i=1,...,N$ so that $S = \R^N$ and $T$ is the evaluation map at these $N$ points.

For this flat problem, we consider the Monge-Gauge energy \cite{EllGraHob16}.
The numerical analysis for this has been considered in \cite{GraKie18} for finite size particles with constraints on closed curves using a penalty method.
The authors make use of higher order $H^2$ conforming finite elements so do not need to split the equation.
%Here we  extend the work of \cite{EllFriHob19} to obtain many of the results presented in \cite{GraKie18} for this particular setting of point constraints using second order splitting.
\subsubsection{Fourth order formulation}
\begin{definition}
	Define $a\colon H^2(\Omega)\times H^2(\Omega) \to \R$ by
	\[
		a(u,v) := \int_\Omega \kappa \Delta u \Delta v + \sigma \nabla u \cdot \nabla v,
	\]
	$T\colon C(\Omega) \to \R^N$ by
	\[
		Tu = (u(X_i))_{i=1}^N,
	\]
	and for any $\epsilon >0$,  $a_\epsilon\colon H^2(\Omega)\times H^2(\Omega) \to \R$ by
	\[
		a_\epsilon(u,v) := a(u,v) + \frac{1}{\epsilon} (Tu , Tv)_{\R^N}. 
	\]
\end{definition}
It may be seen \cite{EllGraHob16} that $a$ is coercive over $V:=H^2(\Omega) \cap H^1_0(\Omega)$, which corresponds to so called Navier boundary conditions, which we consider here.
\begin{problem}\label{problem:flat4Lagrange}
	Given $f \in (H^2(\Omega))^*$, find $u \in V$ minimising $\frac{1}{2}a(u,u) - \langle f, u \rangle$ subject to $u(X_i) = Z_i$ for $i=1,...,N$.
	This has variational formulation to find $u \in V$ such that $u(X_i) = Z_i$ for $i=1,...,N$ and 
	\[
		a(u,\eta) = \langle f, \eta\rangle ~~\forall \eta \in V : T\eta =0.
	\]
\end{problem}
\begin{problem}\label{problem:flat4Penalty}
Given $f \in (H^2(\Omega))^*$, find $u^\epsilon \in V$ minimising $\frac{1}{2}a(u^\epsilon,u^\epsilon) + \frac{1}{2\epsilon} | Tu^\epsilon - Z|^2 - \langle f, u^\epsilon\rangle$.
	This has variational formulation to find $u^\epsilon \in V$ such that
	\[
		a_\epsilon(u^\epsilon,\eta) = \frac{1}{\epsilon}(Z,T\eta)_{\R^N} + \langle f,\eta\rangle ~~ \forall \eta \in V.
	\]
\end{problem}
\begin{theorem}
	There are unique solutions to both Problems \ref{problem:flat4Lagrange} and \ref{problem:flat4Penalty}.
\end{theorem}
\begin{proof}
This is shown in \cite{EllGraHob16} by making use of the Lax-Milgram theorem with the coercivity of $a$ over $V$.
\end{proof}
For $f=0$, these are the membrane problem studied in \cite{EllGraHob16,GraKie18}.
In very much the same way as the preceding subsection, one may see that the point constraint problem can be written as the following PDE in distribution
\begin{align*}
	\kappa \Delta^2 u - \sigma \Delta u + \sum_{i=1}^N \lambda_i \delta_{X_i} &= f ~ \mbox{in}~ \Omega,
	\\
	u(X_i) &= Z_i ~ \mbox{for} ~ i=1,...,N,
	\\
	u|_{\partial \Omega} = \Delta u |_{\partial \Omega} &= 0.
\end{align*}
With the penalty problem having the distributional PDE, 
\begin{align*}
	\kappa \Delta^2 u^\epsilon - \sigma \Delta u^\epsilon + \frac{1}{\epsilon} \sum_{i=1}^N u^\epsilon\delta_{X_i} &= f + \frac{1}{\epsilon} \sum_{i=1}^N Z_i \delta_{X_i}~ \mbox{in} ~ \Omega
	\\
	u|_{\partial\Omega} = \Delta u |_{\partial\Omega} &= 0.
\end{align*}
\subsubsection{Second order splitting applied to this fourth order problem}
\begin{definition}\label{def:SOSFlat}
	Let $\infty > q > 2 > p > 1$, then we define $X = W^{1,q}_0(\Omega)$, $Y=W^{1,p}_0(\Omega)$, $L = L^2(\Omega)$ and $ S= \R^N$, with the operators
	\begin{align*}
		c\colon& X \times X \to \R,
		\\
		b\colon& X \times Y \to \R,
		\\
		m \colon& L \times L \to \R,
	\end{align*}
	given by
	\begin{align*}
		c(u,\eta) =& \int_\Omega \left( \frac{\sigma}{\kappa} -2\right)\nabla u \cdot \nabla \eta - u \eta,
		\\
		b(\eta,\xi) =& \int_\Omega \nabla \eta \cdot \nabla \xi + \eta \xi,
		\\
		m(\eta,\xi) =& \int_\Omega \eta \xi.
	\end{align*}
\end{definition}
This definition allows us to pose the problems for this flat case. Note the generalisation of Problems \ref{problem:flat4Lagrange} and \ref{problem:flat4Penalty} by the inclusion of $g$ in the right hand side of the second equations.
\begin{problem}\label{prob:LagrangeFlat}
	Given $f\in (W^{1,q}(\Omega))^*$, $g\in (W^{1,p}(\Omega))^*$, find $(u,w,\lambda)\in X \times Y \times \R^N$ such that
	\begin{align*}
		c(u,\eta) + b(\eta,w) + (T\eta,\lambda )_{\R^N} &= \langle f, \eta \rangle \quad &\forall \eta \in X,
		\\
		b(u,\xi) - m(w,\xi) &= \langle g, \xi\rangle \quad &\forall \xi \in Y,
		\\
		Tu &= Z.
	\end{align*}
\end{problem}
\begin{problem}\label{Prob:PenaltyFlat}
	Given $f\in (W^{1,q}(\Omega))^*$, $g\in (W^{1,p}(\Omega))^*$, find $(u^\epsilon,w^\epsilon) \in X\times Y$ such that
	\begin{align*}
		c(u^\epsilon,\eta) + b(\eta,w^\epsilon) + \frac{1}{\epsilon}(Tu^\epsilon,T\eta)_{\R^N} &= \langle f, \eta \rangle + \frac{1}{\epsilon}(Z,T\eta)_{\R^N}\quad &\forall \eta \in X,
		\\
		b(u^\epsilon,\xi) - m(w^\epsilon,\xi) &= \langle g, \xi\rangle \quad &\forall \xi \in Y.
	\end{align*}
\end{problem}
Checking the required assumptions, Asssumptions \ref{ass:AbstractCoercivity}, \ref{InfSupSOSCoercivity}, \ref{ass:discreteInfSupB} and \ref{kernelInfSup} hold almost identically as in Subsection \ref{subsec:NearSphericalBiomembrane} and gives the following theorem and corollary.
\begin{theorem}
	There are unique solutions to Problems \ref{Prob:PenaltyFlat} and \ref{prob:LagrangeFlat}.
\end{theorem}
%One may also show the same estimates between the $(u,w)$ and $(u^\epsilon, w^\epsilon)$.
\begin{corollary}\label{cor:ApplicationEpsilonConvergenceFlat}
Let $(u,w,\lambda)$ solve Problem \ref{prob:LagrangeFlat} and $(u^\epsilon,w^\epsilon)$ solve Problem \ref{Prob:PenaltyFlat}.
Then there is $C>0$ such that
\[
	\|u-u^\epsilon\|_X + \|w-w^\epsilon\|_L+ \sqrt{\epsilon}\|w-w^\epsilon\|_Y + \sqrt{\epsilon}\|{\epsilon}^{-1} (Tu^\epsilon - Z)-\lambda\|_{\R^N} \leq C\|\lambda\|_{\R^N}\epsilon.
\]
\end{corollary}

\section{Finite element approximation of the membrane problems with point constraints}%spherical biomembrane perturbed by point constraints}
%We recall some definitions and results from surface finite element methods, for full details, the reader is referred to \cite{DziEll13}.
%In this section we quote results on surface PDEs and surface finite elements as detailed in \cite{DziEll13}.

\subsection{A near spherical biomembrane}\label{subsec:NumAnaSurface}
\begin{definition}\label{def:applicationApproxBil}
Define the following bilinear forms on the discrete function space
\begin{align*}
	c_h(u_h,\eta_h) =& \int_{\Gamma_h} \left(\frac{\sigma}{\kappa}-2-\frac{2}{R^2}\right)\nabla_{\Gamma_h}u_h \cdot \nabla_{\Gamma_h} \eta_h - \left(1+\frac{2\sigma}{\kappa R^2}\right) u_h \eta_h,
	\\
	b_h(\eta_h,\xi_h) =& \int_{\Gamma_h} \nabla_{\Gamma_h}\eta_h\cdot \nabla_{\Gamma_h}\xi +\eta_h\xi_h,
	\\
	m_h(\eta_h,\xi_h) =& \int_{\Gamma_h} \eta_h\xi_h,
\end{align*}
with $T_h \eta_h := T \eta_h^l$.
We take $f_h,g_h \in (\mathcal{S}_h^l)^*$ to satisfy $\langle f_h,\eta_h \rangle = \langle f, \eta_h^l\rangle$, $\langle g_h,\xi_h \rangle = \langle g, \xi_h^l\rangle$.%\marginpar{does this exist? Is it a good enough approx?}
\end{definition}
We now verify Assumption \ref{ass:DiscreteLagrange}.
\begin{proposition}\label{prop:surfaceSmallInf-Sup}
	There is $C>0$ such that
	\[
		C(\|w_h^l\|_{1,p} + \|\lambda_h\|_{\R^N})
		\leq
		\sup_{(\eta_h^l,\xi_h^l) \in (\mathcal{S}_h^l\cap X)\times (\mathcal{S}_h^l \cap Y) )} \frac{b(\eta_h^l,w_h^l) + (\lambda_h,T\eta_h^l) + m(w_h^l,\xi_h^l))}{\|\eta_h^l\|_{1,q} + \|\xi_h^l\|_{1,p}}
	\]
\end{proposition}
\begin{proof}
Well-posedness of Problem \ref{prob:LagrangeSurface} gives that for any $(u,w,\lambda) \in X \times Y \times \R^N$, there is $C>0$ such that
\[
C(\|u\|_{1,q} + \|w\|_{1,q} + \|\lambda\|_{\R^N} ) \leq \sup_{\substack{(\eta,\xi,\chi)\\ \in X \times Y \times \R^N}} \frac{c(u,\eta) + b(\eta,w) + (T\eta,\lambda) + b(u,\xi) - m(w,\xi) + (Tu,\chi)_{\R^N} }{\|\eta\|_{1,q} + \|\xi\|_{1,p} + \|\chi\|_{\R^N} }.
\]
By applying this with $(0,y_h^l,\chi_h)\in X\times Y\cap \mathcal{S}_h^l \times \R^N$,
\begin{align*}
	\|y_h^l\|_{1,p} + \|\chi_h\|_{\R^N}
	\leq&
	C\sup_{(\eta,\xi) \in X\times Y} \frac{b(\eta,y_h^l) + (T\eta,\chi_h) + m(y_h^l,\xi)}{\|\eta\|_{1,q} + \|\xi\|_{1,p}}
	\\
	=&
	C\sup_{(\eta,\xi) \in X\times Y} \frac{b(\Pi_h\eta,y_h^l) + (T\Pi \eta,\chi_h) + (T(\eta-\Pi_h\eta),\chi_h) + m(y_h^l,P_h\xi)}{\|\eta\|_{1,q} + \|\xi\|_{1,p}}
	\\
	\leq&
	C\sup_{(\eta,\xi) \in X\times Y} \frac{b(\Pi_h\eta,y_h^l) + (T\Pi_h\eta,\chi_h) + m(y_h^l,P_h\xi)}{\|\Pi_h\eta\|_{1,q} + \|P_h \xi\|_{1,p}} + C' h^{1-2/q}|\log(h)| \|\chi_h\|_{\R^N}.
\end{align*}
Where $P_h$ is the $L^2(\Gamma)$ projection, the $\log$ term appears from $\|\eta-\Pi_h\eta\|_{0,\infty} \leq C|\log(h)| \|\eta-I_h^l\eta\|_{0,\infty}$ \cite{Sch80}, the $h^{1-2/q}$ follows from interpolation inequalities and $P_h$ is a bounded operator from $W^{1,p}(\Gamma)$ to itself \cite{BraPasSte02}.
Thus for sufficiently small $h$, this completes the proof.
\end{proof}

\begin{problem}\label{prob:SurfaceDiscreteLagrange}
	Find $(u_h,w_h,\lambda_h) \in \mathcal{S}_h\times\mathcal{S}_h\times\R^N$ such that $\int_{\Gamma_h} u_h = 0$ and
	\begin{align*}
		c_h(u_h,\eta_h) + b_h(\eta_h,w_h) + (T_h \eta_h, \lambda_h)_{\R^N} &= \langle f_h, \eta_h\rangle	\quad \forall \eta_h \in \mathcal{S}_h : \int_{\Gamma_h} \eta_h = 0,
		\\
		b_h(u_h,\xi_h) - m_h(w_h,\eta_h) &= \langle g_h,\xi_h\rangle	\quad \forall \xi_h \in \mathcal{S}_h,
		\\
		T_h u_h &= Z.
	\end{align*}
\end{problem}
\begin{theorem}\label{thm:SurfaceDiscreteLagrange}
There is unique solution to Problem \ref{prob:SurfaceDiscreteLagrange}.
Moreover, for $g \in L^2(\Gamma)$ it holds that
\[
	\|u-u_h^l\|_{1,2} + \|w-w_h^l\|_{0,2} \leq Ch^{2/q} (\|f\|_{-1,p} + \|g\|_{0,2} + \|Z\|_{\R^N}).
\]
\end{theorem}
One might hope that estimate follows from Theorem \ref{ThmDiscreteLagrangeProblem}, Corollary \ref{Corollary:DiscreteLagrangeEstimates} and Proposition \ref{prop:DiscreteLagrangeEstimates}.
However, it is possible to see that due to our choice of $T$ the maximum regularity one might expect is $\bar{X}=H^2(\Gamma),$ $\hat{X}=W^{3,p}(\Gamma)$, $\bar{Y}=\hat{Y}=W^{1,p}(\Gamma)$, and $\bar{S}=\hat{S}=S$, which would give $\alpha=\beta=0$ in the context of Proposition \ref{prop:DiscreteLagrangeEstimates}.
As such we require a different method, the idea is to, in the proof of Proposition \ref{prop:DiscreteLagrangeEstimates}, pick $\xi_h$ to be $\Pi_h w$ which gives that the term which would depend on $\|\phi-\xi_h^l\|_Y$ vanishes.
We also address the fact that the typical lift map from the discrete surface to the continuous surface will not, in general, preserve the integral of functions.
%Note: If $\lambda$ was known, it would be possible to apply a result similar to \cite[Corollary 7.1]{EllFriHob19} to get the estimates and indeed we would be able to obtain an error estimate for $\|u-u_h^l\|_{0,2}$ which is an improvement on the above theorem, although well posedness of the problem is not guarenteed for that discrete problem.
	%The choice $H=H^1(\Gamma)$ and $J=L^2(\Gamma)$, with $K=S$ gives $\beta = 0$
%\begin{proof}
%	The existence follows from Theorem \ref{ThmDiscreteLagrangeProblem}.
%	The estimates follow from Theorem \ref{ThmDiscreteLagrangeProblem}, Corollary \ref{Corollary:DiscreteLagrangeEstimates} and Proposition \ref{prop:DiscreteLagrangeEstimates} with the spaces $\bar{X}=\hat{X}=H^2(\Gamma)$, $\bar{Y}=\hat{Y}=W^{1,p}(\Gamma)$, and $\bar{S}=\hat{S}=S$, which gives $\alpha=0$.
%	The choice $H=H^1(\Gamma)$ and $J=L^2(\Gamma)$, with $K=S$ gives $\beta = 0$.
%\end{proof}
\begin{proof}
	The existence follows from Theorem \ref{ThmDiscreteLagrangeProblem}.
	For the estimate, consider $(\psi,\phi,\chi)\in X\times W^{1,p}(\Gamma) \times \R^N$ such that
	\begin{align*}
		c(\eta,\psi) + b(\eta,\phi) + (T\eta,\chi) &= ( u-u_h^l,\eta)_{H^1(\Gamma)} \quad \forall \eta \in X,
		\\
		b(\psi,\xi) - m(\phi,\xi) &= ( w-w_h^l,\xi )_{L^2(\Gamma)} \quad \forall \xi \in W^{1,p}(\Gamma),
		\\
		T\psi &= \lambda - \lambda_h.
	\end{align*}
	This has unique solution with $\|\psi\|_{2,2} + \|\phi\|_{1,p} + \|\chi\|_{\R^N} \leq C (\|u-u_h^l\|_{1,2} + \|w-w_h^l\|_{0,2} + \|\lambda-\lambda_h\|_{\R^N})$.
	Testing this system with $(u-u_h^l +[u_h^l],w-w_h^l,\lambda-\lambda_h)$, where $[v]:=\frac{1}{|\Gamma|}\int_\Gamma v$ is the average value of $v$, gives
	\begin{align*}
		\|u-u_h^l\|_{1,2}^2 + ( u-u_h^l,[u_h^l])_{H^1(\Gamma)} +& \|w-w_h^l\|_{0,2}^2 + \|\lambda-\lambda_h\|_{\R^N}^2
		\\=&
		c(u-u_h^l+[u_h^l], \psi) + b(\psi, w-w_h^l) + (T\psi,\lambda-\lambda_h)_{\R^N}
		\\&+
		b(u-u_h^l+[u_h^l],\phi) - m(w-w_h^l,\phi)
		+
		(T(u-u_h^l),\chi)_{\R^N}.
	\end{align*}
	 The final term here is $0$ when $h$ is sufficiently small that $T_h u_h = Tu_h^l = Tu =Z$, and it holds that $|[u_h^l]| \leq Ch^2\|u_h^l\|_{0,2}$.
	We consider
	\begin{align*}
		b(u-u_h^l,\phi) - m(w-w_h^l,\phi)
		=&
		\langle g, \phi-\Pi_h \phi\rangle + \langle g,\Pi_h \phi\rangle - \langle g_h, \Pi_h \phi ^{-l} \rangle
		\\
		&+
		m(w_h^l,\phi-\Pi_h\phi) + b_h(u_h,\Pi_h \phi^{-l}) - b(u_h^l,\Pi_h \phi)
		\\
		&+m(w_h^l,\Pi_h \phi) - m_h(w_h,\Pi_h \phi^{-l})
		\\
		\leq&
		C \big[ \left(\|g\|_{0,2}+\|w_h^l\|_{0,2}\right)\|\phi-\Pi_h\phi\|_{0,2}
		\\
		&+ h^2 \left(\|g\|_{0,2} + \|u_h^l\|_{1,q}+ \|w_h^l\|_{0,2}\right)\|\phi\|_{1,p}\big]
		\\
		\leq&
		C\left[ h^{2/q} (\|g\|_{0,2}+ \|w_h^l\|_{0,2} ) + h^2 (\|g\|_{0,2} + \|u_h^l\|_{1,q} + \|w_h^l\|_{0,2}) \right] \|\phi\|_{1,p}.
	\end{align*}
	For the remaining terms,
	\begin{align*}
		c(u-u_h^l,\psi) &+ b(\psi,w-w_h^l) + (T\psi,\lambda-\lambda_h)_{\R^N}
		\\
		=&
		\langle f,\psi\rangle - c(u_h^l,\psi-I_h^l \psi) - b(\psi-I_h^l\psi,w_h^l) - (T(\psi-I_h^l \psi),\lambda_h)_{\R^N}
		\\
		&-c(u_h^l,I_h^l\psi) - b(I_h^l\psi,w_h^l) - (TI_h^l\psi,\lambda_h)_{\R^N} -\langle f_h,I_h\psi-[I_h\psi]\rangle
		\\
		&+ c_h(u_h,I_h\psi-[I_h\psi]) + b_h(I_h\psi-[I_h\psi],w_h) + (T_h (I_h\psi-[I_h\psi]),\lambda_h)_{\R^N}
		\\
		\leq&
		C\big[ (\|f\|_{-1,p}+ \|u_h^l\|_{1,q}+ \|w_h^l\|_{1,p}+\|\lambda_h\|_{\R^N}) \|\psi-I_h^l\psi\|_{1,q}
		\\
		&+ h^2\|I_h^l\psi\|_{1,q}(\|u_h^l\|_{1,q} +\|w_h^l\|_{1,p} + \|\lambda_h\|_{\R^N} + \|f\|_{-1,p})
		\\
		&+ [I_h\psi](\|u_h^l\|_{1,q} +\|w_h^l\|_{1,p} + \|\lambda_h\|_{\R^N} + \|f\|_{-1,p})\big]\\
		\leq&
		C\big[ h^{2/q}(\|f\|_{-1,p}+ \|u_h^l\|_{1,q}+ \|w_h^l\|_{1,p}+\|\lambda_h\|_{\R^N}) \|\psi\|_{2,2}
		\\
		&+ h^2\|I_h^l\psi\|_{1,q}(\|u_h^l\|_{1,q} +\|w_h^l\|_{1,p} + \|\lambda_h\|_{\R^N} + \|f\|_{-1,p})
		\\
		&+ [I_h\psi](\|u_h^l\|_{1,q} +\|w_h^l\|_{1,p} + \|\lambda_h\|_{\R^N} + \|f\|_{-1,p})\big],
	\end{align*}
	where we may see that $[I_h\psi] \leq C h^2\|\psi\|_{2,2}$, and we have used
	\[
		\langle f,I_h^l\psi\rangle - \langle f_h,I_h\psi\rangle\leq Ch^2 \|f\|_{-1,p}\|I_h^l \psi\|_{1,q},
		\]
	which follows from the estimates on $m$ and $m_h$ and using density.
	Hence we have, after using Young's inequality on the additional left hand side term,
	\begin{align*}
		\|u-u_h^l\|_{1,2}^2 +& \|w-w_h^l\|_{0,2}^2 + \|\lambda-\lambda_h\|_{\R^N}^2
		\\
		\leq&
		Ch^{2/q} (\|g\|_{0,2}+ \|f\|_{-1,p} + \|u_h^l\|_{1,q} + \|w_h^l\|_{1,p} +\|\lambda_h\|_{\R^N})(\|\psi\|_{2,2} + \|\phi\|_{1,p}),
	\end{align*}
	which after using $\|u_h^l\|_{1,q} + \|w_h^l\|_{1,p} +\|\lambda_h\|_{\R^N} \leq C (\|g\|_{0,2} + \|f\|_{-1,p} + \|Z\|_{\R^N}),$ and the regularity estimates assumed for $\|\psi\|_{2,2} + \|\phi\|_{1,p}$, gives the result.
\end{proof}

We now wish to improve this estimates to the spaces which are natural to the problem.
\begin{corollary}\label{cor:SurfaceQ}
	Under the assumptions of the above theorem, it holds that 
	\[
		\|u-u_h^l\|_{1,q} \leq C h^{2/q} (\|g\|_{0,2} + \|f\|_{-1,p} + \|Z\|_{\R^N}).
	\]
\end{corollary}
\begin{proof}
	The inf-sup condition in Lemma \ref{lem:surfInfSup} gives,
	\[
		\beta\|u-u_h^l\|_{1,q}
		\leq
		\sup_{\xi \in W^{1,p}(\Gamma)}
		\frac{b(u-u_h^l,\xi)}{\|\xi\|_{1,p} },
	\]
	where we see
	\begin{align*}
		b(u-u_h^l,\xi)
		=& m(w,\xi-\Pi_h\xi) + \langle g,\xi-\Pi_h\xi\rangle + m(w-w_h^l,\Pi_h\xi)
		\\
		&+
		m(w_h^l,\Pi_h\xi) - b(u_h^l,\Pi_h\xi) + \langle g,\Pi_h \xi \rangle
		\\
		&- m_h(w_h,(\Pi_h\xi)^{-l} ) + b_h(u_h,(\Pi_h\xi)^{-l}) - \langle g_h,(\Pi_h\xi)^{-l}\rangle
		\\
		\leq&
		Ch^{2/q}(\|w\|_{0,2} + \|g\|_{0,2})\|\xi\|_{1,p} + C\|w-w_h^l\|_{0,2} \|\xi\|_{1,p}.
	\end{align*}
	The estimate shown for $\|w-w_h^l\|_{0,2}$ in Theorem \ref{thm:SurfaceDiscreteLagrange} completes the result.
\end{proof}
\begin{corollary}\label{cor:SurfaceP}
	In addition to the assumptions of the above, assume $f \in C(\Gamma)^*$, it then holds,
	\[
		\|w-w_h^l\|_{1,p} \leq C h^{\min(2/p-1,2/q)}|\log(h)| (\|f\|_{C(\Gamma)^*} + \|g\|_{0,2} + \|Z\|_{\R^N}).
	\]
	%Thought we could do better and lose the dependence on 2/q \marginpar{{\bf ALARM - experiments do not give this convergence} ($p=4/3$ works, gives order almost $0.5$, $p=1$ only $0.8 < 1!$)}
\end{corollary}
\begin{proof}
%	$f \in C(\Gamma)^*$ gives that we may take $q$ arbitrarily close to $2$ in Theorem \ref{thm:SurfaceDiscreteLagrange}.
%	Hence one has
%	\[
%		\|u-u_h^l\|_{1,2} + \|w-w_h^l\|_{0,2} + \|\lambda-\lambda_h\|_{\R^N}
%		\leq C h^{1-\delta},
%	\]
%	for any $\delta>0$.\marginpar{,\\,\\ \\,\\,\\ this should surely be a remark outside of this theorem?}
	The inf-sup condition from Proposition \ref{lem:surfaceMeanValueWork} gives,
	\[
		\bar{\gamma}\|w-w_h^l - [w] + [w_h^l]\|_{1,p} \leq\sup_{\eta\in X}\frac{b(\eta,w-w_h^l -[w]+[w_h^l])}{\|\eta\|_{1,q}} = \sup_{\eta\in X}\frac{b(\eta,w-w_h^l)}{\|\eta\|_{1,q}}.
	\]
	Thus we calculate
	\begin{align*}
		b(\eta,w-w_h^l)
		=&
		\langle f, \eta- \Pi_h\eta\rangle - c(u,\eta-\Pi_h\eta) - (\lambda,T(\eta-\Pi_h \eta) )_{\R^N}
		\\
		&+ \langle f,\Pi_h\eta\rangle + c(u_h^l-u,\Pi_h\eta) +(\lambda_h-\lambda,T\Pi_h\eta)_{\R^N} - (\lambda_h,T\Pi_h\eta)_{\R^N}
		\\
		&- c(u_h^l,\Pi_h\eta)  - b(\Pi_h\eta,w_h^l)
		\\
		&+
		c_h(u_h,\Pi_h^{-l}\eta) + b_h(\Pi_h^{-l}\eta,w_h) + (\lambda_h,T_h\Pi_h^{-l}\eta)_{\R^N} - \langle f_h,\Pi_h^{-l}\eta\rangle.
	\end{align*}
	For our particular choice of $c$, it holds that %both $c(u,\eta) \leq C \|u\|_{1,2}\|\eta\|_{1,2}$ and 
	$c(u,\eta) \leq C \|u\|_{2,2}\|\eta\|_{0,2}$.
	For $ f \in C(\Gamma)^*$ it holds $\langle f,\eta \rangle \leq \|f\|_{C(\Gamma)^*} \|\eta\|_{0,\infty}$.
	Furthermore, we have that $T_h\Pi_h^{-l} \eta = T \Pi_h \eta$ by definition.
	From \cite{Sch80} $\|\eta-\Pi_h\eta\|_{0,\infty} \leq C |\log(h)|\|\eta-I_h^l\eta\|_{0,\infty}$.
	Together, this gives
	\begin{align*}
		b(\eta,w-w_h^l)
		\leq
		C\|\eta\|_{1,q} &\big[ h\|u\|_{2,2} + |\log(h)|h^{1-2/q} (\|\lambda\|_{\R^N}+ \|f\|_{C(\Gamma)^*})
		\\
		&+ \|u-u_h^l\|_{1,2} + \|\lambda-\lambda_h\|_{\R^N} +h^2(\|f\|_{C(\Gamma)^*} +\|u_h^l\|_{1,q} +\|w_h^l\|_{1,p} ) \big].
	\end{align*}
	Using the estimate in Theorem \ref{thm:SurfaceDiscreteLagrange} for $\|u-u_h^l\|_{1,2} + \|\lambda-\lambda_h\|_{\R^N}$ completes the proof.
\end{proof}
Notice that for $p = 4/3$, this results in almost $h^{\frac{1}{2}}$ convergence.
We now look at the problem with penalty, which follows as in Theorem \ref{thm:UseApproximationProperties}.

%We now use this analysis to calculate the errors in the penalty problem.\marginpar{poor wording}
\begin{problem}\label{prob:SurfaceDiscretePenalty}
	Find $(u_h^\epsilon,w_h^\epsilon)\in \mathcal{S}_h^l\times\mathcal{S}_h^l$ such that $\int_{\Gamma_h} u_h^\epsilon = 0$ and
	\begin{align*}
		c_h(u_h^\epsilon,\eta_h) + b_h(\eta_h,w_h^\epsilon) + \frac{1}{\epsilon}(T_h u_h^\epsilon,T\eta_h)_{\R^N} &= \langle f_h,\eta_h\rangle + \frac{1}{\epsilon}(Z,T\eta_h)_{\R^N} \quad &\forall \eta_h \in \mathcal{S}_h : \int_{\Gamma_h} \eta_h =0,
		\\
		b_h(u_h^\epsilon,\xi_h) - m(w_h^\epsilon,\xi_h) &= \langle g_h,\xi_h\rangle \quad &\forall \xi_h \in \mathcal{S}_h.
	\end{align*}
\end{problem}

\begin{theorem}\label{thm:surfaceDiscretePenalty}
There is a unique solution to Problem \ref{prob:SurfaceDiscretePenalty}.
Moreover, for $g\in L^2(\Gamma)$, it holds for $f \in W^{1,q}(\Gamma)^*$,
\[
	\|u^\epsilon-(u_h^\epsilon)^l\|_{1,2} +\|w^\epsilon-(w_h^\epsilon)^l\|_{0,2}
	\leq
	C(h^{2/q}+\sqrt{\epsilon})(\|f\|_{-1,p} + \|g\|_{0,2} + \|Z\|_{\R^N})
\]
In particular,
\[
	\|u^\epsilon-(u_h^\epsilon)^l\|_{1,q}
	\leq
	C(h^{2/q}+\sqrt{\epsilon})(\|f\|_{-1,p} + \|g\|_{0,2} + \|Z\|_{\R^N})
\]
and if $f \in C(\Gamma)^*$, then for any $1<p<2<q<\infty$ with $p,q$ conjugate
\[
	\|w^\epsilon-(w_h^\epsilon)^l\|_{1,p}
	\leq
	C (h^{\min(2/p-1,2/q)}|\log(h)|+ \sqrt{\epsilon}) (\|f\|_{C(\Gamma)^*} + \|g\|_{0,2} + \|Z\|_{\R^N}).
\]
\end{theorem}
\begin{proof}
The results follow from the same argument as Theorem \ref{thm:UseApproximationProperties}, where we have the results of Theorem \ref{thm:SurfaceDiscreteLagrange} and Corollaries \ref{cor:SurfaceQ} and \ref{cor:SurfaceP} to give the $h$ estimates and we may see from Corollary \ref{cor:uniformBoundsEpsi} and Lemma \ref{discreteConvergenceInEps} the $\epsilon$ approximation.%, and we may see from Corollary \ref{cor:ApplicationEpsilonConvergenceFlat} and  Lemma \ref{lem:SurfaceLagrangeInfSup} and Proposition \ref{prop:surfaceSmallInf-Sup} the $\epsilon$ approximation. %Lemma \ref{discreteConvergenceInEps} %\ref{cor:uniformBoundsEpsi}
\end{proof}

\subsection{A flat biomembrane}\label{subsec:NearFlatMembrane}
We here outline the existence results and estimates as in the preceeding subsection for the case of the nearly flat membrane problem discussed in Subsection \ref{subsec:NearFlatMembrane}.
We make the simplifying assumption that $\Omega$ is convex and polygonal.
\begin{definition}
	Let $\mathcal{T}_h$ be a triangulation of $\Omega$ with $\bar{\Omega} = \bigcup_{K \in \mathcal{T}_h}K$ and  $K^\circ \cap (K')^\circ= \emptyset$ $\forall K,K' \in \mathcal{T}_h$ for $K \neq K'$.
	Define
	\[
		\mathcal{S}_h := \{ \chi \in C(\Omega) \,:\, \chi|_K \in P^1(K) ~\forall K \in \mathcal{T}_h\}.
	\]
	Where $P^1(K)$ is the polynomials of degree 1 or less on $K$.
	The Lagrange basis functions $\phi_i$ of this space are uniquely determined by their values at the so-called Lagrange nodes $q_j$.
	The associated Lagrange interpolation $I_h \colon C(\Omega) \to \mathcal{S}_h$ is given by $I_h f := \sum_{i} f(q_i)\phi_i$.
	We again take the linear functionals as in Definition \ref{def:SOSFlat} and assume that $\mathcal{T}_h$ is a regular triangulation. %****
\end{definition}

\begin{problem}\label{problem:discreteFlatLagrange}
Find $(u_h,w_h,\lambda_h) \in \mathcal{S}_h\times \mathcal{S}_h\times \R^N$ such that
\begin{align*}
	c(u_h,\eta_h) + b(\eta_h,w_h) + (T\eta_h , \lambda_h)_{\R^N} =& \langle f_h, \eta_h\rangle~ \forall \eta_h \in \mathcal{S}_h
	\\
	b(u_h,\xi_h) - m(w_h,\xi_h) =& \langle g, \xi_h\rangle ~\forall  \xi_h \in \mathcal{S}_h
	\\
	Tu_h =& Z.
\end{align*}
\end{problem}

\begin{problem}\label{problem:discreteFlatPenalty}
Find $(u_h^\epsilon,w_h^\epsilon) \in \mathcal{S}_h\times \mathcal{S}_h$ such that
\begin{align*}
	c(u^\epsilon_h,\eta_h) + b(\eta_h,w_h^\epsilon) + \frac{1}{\epsilon} (Tu_h^\epsilon,T\eta_h)_{\R^N} =& \langle f ,\eta_h \rangle + \frac{1}{\epsilon} (Z,T\eta_h)_{\R^N} ~~ \forall \eta_h \in \mathcal{S}_h
		\\
	b(u_h,\xi_h) - m(w_h,\xi_h) =& \langle g, \xi_jh\rangle ~\forall  \xi_h \in \mathcal{S}_h
\end{align*}
\end{problem}

\begin{theorem}
	There is a unique solution to Problem \ref{problem:discreteFlatLagrange}.
	Moreover for $g \in L^2(\Omega)$ and any $q>2$, it holds for $f \in (W^{1,q}(\Omega))^*$
	\[
		\|u-u_h\|_{1,q} + \|w-w_h\|_{0,2}
		\leq C(h^{2/q} (\|f\|_{-1,p} + \|g\|_{0,2} + \|Z\|_{\R^N})
	\]
	Furthermore, if $f \in (C(\Omega))^*$, with $\frac{1}{p}+\frac{1}{q}=1$,
	\[
		\|w-w_h\|_{1,p} \leq C (h^{\min(2/p-1,2/q)}|\log(h)|) (\|f\|_{C(\Omega)^*}+ \|g\|_{0,2} + \|Z\|_{\R^N}).
	\]
\end{theorem}

\begin{theorem}
	There is a unique solution to Problem \ref{problem:discreteFlatPenalty}.
	Moreover for $g \in L^2(\Omega)$ and any $q>2$, it holds for $f \in (W^{1,q}(\Omega))^*$
	\[
		\|u^\epsilon-u^\epsilon_h\|_{1,q} + \|w^\epsilon-w^\epsilon_h\|_{0,2}
		\leq C(h^{2/q} +\sqrt{\epsilon}) (\|f\|_{-1,p} + \|g\|_{0,2} + \|Z\|_{\R^N})
	\]
	Furthermore, if $f \in (C(\Omega))^*$, with $\frac{1}{p}+\frac{1}{q}=1$,
	\[
		\|w^\epsilon-w^\epsilon_h\|_{1,p} \leq C (h^{\min(2/p -1,2/q)}|\log(h)| + \sqrt{\epsilon}) (\|f\|_{C(\Omega)^*}+ \|g\|_{0,2} + \|Z\|_{\R^N}).
	\]
\end{theorem}
These results follow from a slight variation of the arguments presented in Subsection \ref{subsec:NumAnaSurface}.

%\marginpar{Hiding a section on the flat domain example - can summarise it via "this follows as in the curved domain case"}

\section{Numerical experiments}\label{Sec:Experiments}
We conclude with some numerical examples.
All of the numerical examples are done for the biomembrane problem as outlined in Section \ref{Sec:SurfaceApplication}.
%, although our first experiment will be for the associated flat membrane problem as considered in \cite{EllGraHob16}.
%For the problems posed on a surface we calculate the error on the discrete surface.
When we discuss the error at level $h$, we will be referring to the relative error, where we define the relative error between $u$ and $u_h^l$ in norm $\|\cdot\|_W$ to be given by $E_W(h):=\|u-u_h^l\|_W/\|u\|_W$.
The EOC (experimental order of convergence) between levels $h_1$ and $h_2$ is given by $EOC_{W}(h_1,h_2) := \log(E_W(h_1)/E_W(h_2))/\log(h_1/h_2)$.
In the experiments, we will take the EOC to be at the current level and the previous refinement.

%A key point to note is that for the experiments, rather than enforcing the constraint, we choose to use a strong penalty such that the error due to the penalty is negligible in comparison to the discretisation error.  % dominates.
%This is done to use more efficient solvers (e.g. Conjugate Gradient).  % rather than having to use something more robust (e.g. GMRES).
%If the points to be constrained were grid points, this would allow for fixing of the degree of freedom associated to the grid point.

%This one would be more appropriate
%A key point to note is that for a general problem one might have, it might be preferable to use a strong penalty rather than enforcing the constraint directly.
%This is due to, when a constraint does not lie on a grid point, the system matrix may no longer be appropriately positive definite,.
%For constraining a grid point, one can trivially fix the degree of freedom in this setting of linear finite elements.

All the experiments have been implemented under the Distributed and Unified Numerics Environment (DUNE) \cite{AlkDedKlo16,BlaBurDed16}. %\marginpar{get correct DUNE references - I shall email Andreas to make sure}.
\subsection{Flat case experiment}%\marginpar{Experiment environment similar to problem?}
\label{sec:experimentFlat}

The first example is for a flat domain.  Let $\Omega$ be the unit disc in $\R^2$ centred at the origin and 
 $\mathcal P:= \{(0,0),(0.5,0),(-0.5,0),(0,0.5),(0,-0.5)\}$ be 5 distinct points in $\Omega$.
The PDE boundary value problem is
 $$\Delta^2 u = 0~\mbox{in}~ \Omega \setminus \mathcal P$$ such that
  $$~u(X_j) = g(X_j)~ \forall X_j \in \mathcal P, ~u|_{\partial \Omega} = \Delta u|_{\partial\Omega}=0,$$
where $g(x) := 1-|x|^2 + \frac{|x|^2}{2}\log(|x|^2)$. It has the solution $$u(x) = 1-|x|^2 +\frac{|x|^2}{2}\log(|x|^2).$$

This can be viewed as a flat biomembrane problem  with $\kappa = 1$ and $\sigma = 0$.
The coupled second order system  is
\begin{align*}
	2\Delta u - u -\Delta w + w &= 0  ~\mbox{in}~ \Omega\setminus \mathcal P,
	\\
	-\Delta u + u - w &=0  ~\mbox{in}~ \Omega,
	\\
	u|_{\partial \Omega} =  w|_{\partial \Omega} &= 0,
	\\
	u(X_j) &= g(X_j)~ \forall X_j \in \mathcal P.
\end{align*}
As in Subsection \ref{subsec:NearFlatBiomembrane}, we see that for the first equation, this is not posed on the domain $\Omega$, but away from the points being constrained.

The bilinear forms become
\[
	c(u,\eta) = \int_\Omega -2 \nabla u \cdot\nabla \eta - u\eta,~~ b(u,\eta) = \int_\Omega \nabla u \cdot\nabla \eta + u\eta,~~ m(w,\xi) = \int_\Omega w\xi.
\]
Since the problem is posed with homogeneous Navier boundary conditions on the unit disc, we may pose the discrete  
problem on a polygonal domain  $\Omega_h$  which approximates the unit disc from within and extend the finite element spaces to be  $0$ in the 
skin $\Omega\setminus \Omega_h$.
We only calculate the error on the discrete domain, it is clear that the error due to the skin will be sufficiently small that it should not interfere with the calculated interior error. Errors are displayed in the Tables \ref{table:UFlat}, \ref{table:WFlat} and \ref{table:LambdaFlat}. The errors of $\|w-w_h\|_{0,2}$, $\|w-w_h\|_{1,\frac{4}{3}}$ and $\|u-u_h\|_{1,2}$ behave as expected
 from the theory provided in Section \ref{Sec:SurfaceApplication} whereas  the  errors $\|u-u_h\|_{0,2}$ and $\|\lambda-\lambda_h\|_{\R^5}$ converge at a higher rate. %**** with the latter not appearing to have settled down on this refined grid

\begin{table}[h]
\begin{tabular}{|l|l|l|l|l|}
\hline
$h$       & $E_{L^2}$   & $E_{H^1}$  & $EOC_{L^2}$ & $EOC_{H^1}$ \\ \hline
0.420334  & 0.0347383   & 0.132332   & --          & --          \\ \hline
0.221925  & 0.010496    & 0.0724977  & 1.87385     & 0.943152    \\ \hline
0.113732  & 0.00293398  & 0.0377392  & 1.90671     & 0.976601    \\ \hline
0.0575358 & 0.000787479 & 0.0191858  & 1.93016     & 0.992797    \\ \hline
0.0289325 & 0.000206736 & 0.00965453 & 1.94547     & 0.998988    \\ \hline
\end{tabular}
\caption{Errors and experimental orders of convergence for $u-u_h$ in the flat case experiment, Subsection \ref{sec:experimentFlat}.}
\label{table:UFlat}
\end{table}
\begin{table}[h]
%\begin{tabular}{|l|l|l|l|l|}
%\hline
%$h$       & $E_{L^2}$  & $E_{W^{1,1}}$ & $EOC_{L^2}$ & $EOC_{W^{1,1}}$ \\ \hline
%0.420334  & 0.0242845  & 0.375126      & --          & --              \\ \hline
%0.221925  & 0.0114147  & 0.234828      & 1.18197     & 0.733366        \\ \hline
%0.113732  & 0.0057894  & 0.141483      & 1.01552     & 0.757935        \\ \hline
%0.0575358 & 0.00292883 & 0.0827328     & 0.99999     & 0.787398        \\ \hline
%0.0289325 & 0.00147189 & 0.0473414     & 1.0009      & 0.812041        \\ \hline
%\end{tabular}
\begin{tabular}{|l|l|l|l|l|}
\hline
$h$       & $E_{L^2}$  & $E_{W^{1,\frac{4}{3}}}$ & $EOC_{L^2}$ & $EOC_{W^{1,\frac{4}{3}}}$ \\ \hline
0.420334  & 0.0242845  & 0.435937      & --          & --              \\ \hline
0.221925  & 0.0114147  & 0.316908      & 1.18197     & 0.499267        \\ \hline
0.113732  & 0.0057894  & 0.228215      & 1.01552     & 0.491137        \\ \hline
0.0575358 & 0.00292883 & 0.162944     & 0.99999     & 0.494371        \\ \hline
0.0289325 & 0.00147189 & 0.115812     & 1.0009      & 0.496675        \\ \hline
\end{tabular}
\caption{Errors and experimental orders of convergence for $w-w_h$ in the flat case experiment, Subsection \ref{sec:experimentFlat}.}
\label{table:WFlat}
\end{table}

\begin{table}[h]
\begin{tabular}{|l|l|l|}
\hline
$h$       & $E_{\ell^2}$  & $EOC_{\ell^2}$
\\ \hline
0.420334	&	0.00621158	&	--
\\ \hline
0.221925	&	0.00492358	&	0.363827
\\ \hline
0.113732	&	0.00218382	&	1.2161
\\ \hline
0.0575358	&	0.000763246	&	1.5427
\\ \hline
0.0289325	&	0.000235913	&	1.70795
\\ \hline
\end{tabular}
\caption{Errors and experimental order of convergence for $\lambda-\lambda_h$ in the flat case experiment, Subsection \ref{sec:experimentFlat}.}
\label{table:LambdaFlat}
\end{table}

\subsection{Surface numerical experiment}\label{sec:surfaceNumericalExperiment}
The second numerical example is for the surface of the unit sphere, $\Gamma:= \mathbb{S}(0,1)$. The point constraints are fixed at   the 
six distinct points  $\mathcal P:=\{(\pm 1,0,0),(0,\pm 1,0),(0,0,\pm 1)\}$.
We consider the problem of $\kappa = \sigma = R = 1$ in the forms defined in Definition \ref{def:applicationApproxBil} corresponding  to, is to the PDE boundary value problem, find  $(u,\bar{p})$ such that
\begin{align*}
	\Delta_\Gamma^2 u +\Delta_\Gamma u -2u + \bar{p} &= f - \Delta_\Gamma g + g~\mbox{in}~\Gamma\setminus \mathcal P,
	\\
	u(X_j) &= Z_j ~ \forall X_j\in \mathcal P,
	\\
	\int_\Gamma u &= 0,
\end{align*}
where
\[
	f = %8\pi \delta_{(0,0,1)}
	9 x_3\log(1-x_3) + 9 x_3 - 2\log(1-x_3) + \frac{1}{2}(5+3\log(4)),~~ ~Z_j=U(X_j) ~~j=1,2,...,6,
\]
\[
	g = -3x_3 \log(1-x_3) -3x_3 - \frac{1}{2}(\log(4)+1)
\]
	and \[
	U = (1-x_3)\log(1-x_3) - \frac{1}{2}(\log(4)-1).
\]
We recall that $\bar{p}$ arises as the Lagrange multiplier associated to the constraint $\int_\Gamma u = 0$, as in Subsection \ref{subsec:NearSphericalBiomembrane}.
The solution to this problem is $u=U$, $\bar{p}=0$. The second order splitting system is taken to be
\begin{align*}
	3\Delta_\Gamma u - 3u -\Delta_\Gamma w + w + \bar{p} &= f ~ \mbox{in}~ \Gamma \setminus \mathcal P,
	\\
	-\Delta_\Gamma u + u - w + \bar{q} &= g ~\mbox{in}~\Gamma,
	\\
	u(X_j) &= Z_j ~ \forall X_j \in \mathcal P,
	\\
	\int_\Gamma u = \int_\Gamma w &= 0,
\end{align*}
where $\bar{q}$ is the Lagrange multiplier due to the constraint on the mean value of $w$.

Thus the forms of Definition \ref{def:applicationBilinearForms} with $\kappa=\sigma=R=1$ are given by
\[
	b(u,\eta)=\int_\Gamma \nabla_\Gamma u \cdot \nabla_\Gamma \eta + u\eta,~
	c(u,\eta) = -3b(u,\eta),~
	m(w,\xi)=\int_\Gamma w\xi.
\]
The well-posedness of the problem follows from Section \ref{Sec:SurfaceApplication} and has solution $u = U$, $w = \log(1-x_3)$.

In these numerical computations, implementation of the point constraints is achieved via penalty with $\epsilon = 10^{-8}$, a value chosen sufficiently small as to play no role in the investigation of the order of convergence  with respect to $h$. The errors are displayed  in Tables \ref{table:USurface} and \ref{table:WSurface}.  They behave similarly to that of   the flat case experiment and are consistent with the theory provided in Section \ref{Sec:SurfaceApplication}.% for the error in both the $u$ and $w$ variables, although the convergence of $\|u-u_h^l\|_{0,2}$ is slightly slower here than in the flat case.
% However the error for $w$ appears to be better.
%This discrepency in errors could be to do with the first experiment having a boundary to consider.

\begin{table}[h]
\begin{tabular}{|l|l|l|l|l|}
\hline
$h$       & $E_{L^2}$   & $E_{H^1}$  & $EOC_{L^2}$ & $EOC_{H^1}$ \\ \hline
0.311152  & 0.012565    & 0.0841661  & --          & --          \\ \hline
0.156914  & 0.00356525  & 0.042819   & 1.84007     & 0.987187    \\ \hline
0.0786276 & 0.000990194 & 0.0215476  & 1.85403     & 0.993838    \\ \hline
0.0393352 & 0.000276744 & 0.0107968  & 1.84061     & 0.997706    \\ \hline
0.0196703 & 7.88541e-05 & 0.00540193 & 1.81165     & 0.999252    \\ \hline
\end{tabular}
\caption{Errors and experimental orders of convergence for $u-u_h^l$ in the surface numerical experiment, Subsection \ref{sec:surfaceNumericalExperiment}.}
\label{table:USurface}
\end{table}

\begin{table}[h]
%\begin{tabular}{|l|l|l|l|l|}
%\hline
%$h$       & $E_{L^2}$  & $E_{W^{1,1}}$ & $EOC_{L^2}$ & $EOC_{W^{1,1}}$ \\ \hline
%0.311152  & 0.0486308  & 0.14942       & --          & --              \\ \hline
%0.156914  & 0.0212111  & 0.0817489     & 1.21203     & 0.880987        \\ \hline
%0.0786276 & 0.0098867  & 0.046025      & 1.10472     & 0.831386        \\ \hline
%0.0393352 & 0.00478169 & 0.0259557     & 1.04879     & 0.827016        \\ \hline
%0.0196703 & 0.00235552 & 0.0145359     & 1.02167     & 0.836598        \\ \hline
%\end{tabular}
\begin{tabular}{|l|l|l|l|l|}
\hline
$h$       & $E_{L^2}$  & $E_{W^{1,\frac{4}{3}}}$ & $EOC_{L^2}$ & $EOC_{W^{1,\frac{4}{3}}}$ \\ \hline
0.311152  & 0.0486308  & 0.236187       & --          & --              \\ \hline
0.156914  & 0.0212111  & 0.165895     & 1.21203     & 0.516039        \\ \hline
0.0786276 & 0.0098867  & 0.118446      & 1.10472     & 0.487569        \\ \hline
0.0393352 & 0.00478169 & 0.0845555     & 1.04879     & 0.486638        \\ \hline
0.0196703 & 0.00235552 & 0.0602071     & 1.02167     & 0.49006        \\ \hline
\end{tabular}
\caption{Errors and experimental orders of convergence for $w-w_h^l$ in the surface numerical experiment, Subsection \ref{sec:surfaceNumericalExperiment}.}
\label{table:WSurface}
\end{table}

\subsection{Penalty experiment} \label{sec:PenaltyExperiment}
 We now fix $h$ to be sufficiently small that it should contribute little error and  take a sequence of $\epsilon$ which will tend to $0$.
For simplicity, we consider the same experiment as in Subsection \ref{sec:surfaceNumericalExperiment}.
%For simplicity we consider the same experiment as in subsection \ref{sec:experimentFlat}.
Where previously the quantities $E$ and $EOC$ have been functions of $h$, they will now be functions of $\epsilon$ in the expected way.
The grid is fixed to be the smallest grid used in the previous experiment with $h= 0.0196703$.
In Tables \ref{table:UPenalty}, \ref{table:WPenalty} and \ref{table:LambdaPenalty} we see that the errors are consistent with the results of Corollary \ref{cor:ApplicationEpsilonConvergence},  Theorem \ref{thm:SurfaceDiscreteLagrange} and Theorem \ref{thm:surfaceDiscretePenalty}. %slightly lower than would be expected, this could be to do with the fact that the theory only applies to the continuous solutions and not necessarily the discrete solutions.
%The error for $\|w-(w^\epsilon|_h^l)\|_{1,1}$ is better than expected, this could possibly be explained by use of an interpolation between Sobolev spaces.% Gagliardo–Nirenberg inequality between $L^2(\Gamma)$ and $W^{1,p}(\Gamma)$, for $p$ arbitrarily close to $2$.

\begin{table}[h]
\begin{tabular}{|l|l|l|l|l|}
\hline
$\epsilon$ & $E_{L^2}$ & $E_{H^1}$ & $EOC_{L^2}$ & $EOC_{H^1}$ \\ \hline
0.2        & 0.178146  & 0.173119  & --          & --          \\ \hline
0.1        & 0.09091   & 0.0910002 & 0.970551    & 0.970551    \\ \hline
0.05       & 0.0462214 & 0.0477307 & 0.975878    & 0.930951    \\ \hline
0.025      & 0.0233842 & 0.025107  & 0.983028    & 0.926831    \\ \hline
0.0125     & 0.0117769 & 0.013647  & 0.989572    & 0.879506    \\ \hline
\end{tabular}
\caption{Errors and experimental orders of convergence for $u-(u^\epsilon_h)^l$ in the numerical experiment, Subsection \ref{sec:PenaltyExperiment}.}
\label{table:UPenalty}
\end{table}

\begin{table}[h]
%\begin{tabular}{|l|l|l|l|l|}
%\hline
%$\epsilon$ & $E_{L^2}$ & $E_{W^{1,1}}$ & $EOC_{L^2}$ & $EOC_{W^{1,1}}$ \\ \hline
%0.2        & 0.337878  & 0.455356      & --          & --              \\ \hline
%0.1        & 0.191338  & 0.288568      & 0.820381    & 0.658083        \\ \hline
%0.05       & 0.107691  & 0.180474      & 0.829231    & 0.677121        \\ \hline
%0.025      & 0.0592661 & 0.109001      & 0.861611    & 0.727454        \\ \hline
%0.0125     & 0.0316392 & 0.0644593     & 0.905497    & 0.757876        \\ \hline
%\end{tabular}
\begin{tabular}{|l|l|l|l|l|}
\hline
$\epsilon$ & $E_{L^2}$ & $E_{W^{1,\frac{4}{3}}}$ & $EOC_{L^2}$ & $EOC_{W^{1,\frac{4}{3}}}$ \\ \hline
0.2        & 0.337878  & 0.441999	& --          & --              \\ \hline
0.1        & 0.191338  & 0.29658	& 0.820381    & 0.575622        \\ \hline
0.05       & 0.107691  & 0.19941	& 0.829231    & 0.572684        \\ \hline
0.025      & 0.0592661 & 0.134285	& 0.861611    & 0.570441        \\ \hline
0.0125     & 0.0316392 & 0.0952299	& 0.905497    & 0.495808        \\ \hline
\end{tabular}
\caption{Errors and experimental orders of convergence for $w-(w^\epsilon_h)^l$ in the numerical experiment, Subsection \ref{sec:PenaltyExperiment}.}
\label{table:WPenalty}
\end{table}

\begin{table}[h]
\begin{tabular}{|l|l|l|}
\hline
$\epsilon$	& $E_{\ell^2}$  & $EOC_{\ell^2}$
\\ \hline
0.2			&	0.568092		&	--
\\ \hline
0.1			&	0.421624		&	0.430169
\\ \hline
0.05		&	0.287481		&	0.552491
\\ \hline
0.025		&	0.177819		&	0.693052
\\ \hline
0.0125		&	0.101217		&	0.81296
\\ \hline
\end{tabular}
\caption{Errors and experimental order of convergence for $\lambda- \frac{T_h u_h^\epsilon - Tu }{\epsilon}$ in the numerical experiment, Subsection \ref{sec:PenaltyExperiment}.}
\label{table:LambdaPenalty}
\end{table}

%\begin{comment}%%%****
\subsection{Surface numerical and penalty experiment}\label{sec:MixedExperiment}
We now couple $\epsilon$ and $h$, we take $\epsilon \approx Ch^2$.
The same experiment as in Subsections \ref{sec:surfaceNumericalExperiment} and \ref{sec:PenaltyExperiment} is used.
The $E$ and $EOC$ are calculated in terms of the grid size $h$. %Should we be doing both $\epsilon$ and $h$ though? this would be a simple multiplication?
In Tables \ref{table:UMixed} and \ref{table:WMixed} we see that the errors are consistent with the results of of Corollary \ref{cor:ApplicationEpsilonConvergence},  Theorem \ref{thm:SurfaceDiscreteLagrange} and Theorem \ref{thm:surfaceDiscretePenalty}.

\begin{table}[h]
\begin{tabular}{|l|l|l|l|l|l|}
\hline
$h$	&	$\epsilon$ & $E_{L^2}$ & $E_{H^1}$ & $EOC_{L^2}$ & $EOC_{H^1}$ \\ \hline
0.311152	&	0.2	&	0.182674	&	0.193639	&	--	&	--
\\	\hline
0.156914	&	0.05	&	0.0471203	&	0.063719	&	1.97931	&	1.62364
\\	\hline
0.0786276	&	0.0125	&	0.0118776	&	0.0248031	&	1.99435	&	1.36548
\\	\hline
0.0393352	&	0.003125	&	0.00296094	&	0.0112296	&	2.00569	&	1.14411
\\	\hline
0.0196703	&	0.00078125	&	0.000776274	&	0.00546128	&	1.9318	&	1.0402
\\	\hline
\end{tabular}
\caption{Errors and experimental orders of convergence for $u-(u^\epsilon_h)^l$ in the numerical experiment, Subsection \ref{sec:MixedExperiment}.}
\label{table:UMixed}
\end{table}\begin{table}[h]
\begin{tabular}{|l|l|l|l|l|l|}
\hline
$h$	&	$\epsilon$ & $E_{L^2}$ & $E_{H^1}$ & $EOC_{L^2}$ & $EOC_{W^{1,\frac{4}{3}}}$ \\ \hline
0.311152	&	0.2	&	0.337009	&	0.488743	&	--	&	--
\\	\hline
0.156914	&	0.05	&	0.104921	&	0.253073	&	1.70455	&	0.961402
\\	\hline
0.0786276	&	0.0125	&	0.0310131	&	0.140968	&	1.76387	&	0.846845
\\	\hline
0.0393352	&	0.003125	&	0.00893405	&	0.0271549	&	1.79691	&	0.667151
\\	\hline
0.0196703	&	0.00078125	&	0.00300472	&	0.00677873	&	1.57239	&	0.54459
\\	\hline	
\end{tabular}
\caption{Errors and experimental orders of convergence for $w-(w^\epsilon_h)^l$ in the numerical experiment, Subsection \ref{sec:MixedExperiment}.}
\label{table:WMixed}
\end{table}
%\end{comment}
\newpage

%\bibliographystyle{siam}
%bibliography{../bib}
%\bibliography{SOSbiblib}

\begin{thebibliography}{10}

\bibitem{AlkDedKlo16}
{\sc M.~Alk{\"a}mper, A.~Dedner, R.~Kl{\"o}fkorn, and M.~Nolte}, {\em The
  dune-alugrid module.}, Archive of Numerical Software, 4 (2016), pp.~1--28.

\bibitem{ArnFalWin07}
{\sc D.~Arnold, R.~Falk, and R.~Winther}, {\em Mixed finite element methods for
  linear elasticity with weakly imposed symmetry}, Mathematics of Computation,
  76 (2007), pp.~1699--1723.

\bibitem{BlaBurDed16}
{\sc M.~Blatt, A.~Burchardt, A.~Dedner, C.~Engwer, J.Fahlke, B.~Flemisch,
  C.~Gersbacher, C.~Gr{\"a}ser, F.~Gruber, C.~Gr{\"u}ninger, D.~Kempf,
  R.~Kl{\"o}fkorn, T.~Malkmus, S.~M{\"u}thing, M.~Nolte, M.~Piatkowski, and
  O.~Sander}, {\em The distributed and unified numerics environment, version
  2.4}, Archive of Numerical Software, 4 (2016), pp.~13--29.

\bibitem{BofBreFor13}
{\sc D.~Boffi, F.~Brezzi, and M.~Fortin}, {\em Mixed Finite Element Methods and
  Applications}, vol.~44 of Springer Series in Computtaional Mathematics,
  Springer Berlin Heidelberg, 2013.

\bibitem{BraPasSte02}
{\sc J.~H. Bramble, J.~E. Pasciak, and O.~Steinbach}, {\em On the stability of
  the {$L^2$} projection in {$H^1(\Omega)$}}, Math. Comp., 71 (2002),
  pp.~147--156.

\bibitem{ButNaz11}
{\sc G.~Buttazzo and S.~A. Nazarov}, {\em An optimization problem for the
  {biharmonic} equation with {Sobolev} conditions}, Journal of Mathematical
  Sciences, 176 (2011), p.~786.

\bibitem{CarKri82}
{\sc G.~F. Carey and R.~Krishnan}, {\em Penalty approximation of {S}tokes
  flow}, Computer Methods in Applied Mechanics and Engineering, 35 (1982),
  pp.~169 -- 206.

\bibitem{Cas85}
{\sc E.~Casas}, {\em {$L^2$} estimates for the finite element method for the
  {D}irichlet problem with singular data}, Numer. Math., 47 (1985),
  pp.~627--632.

\bibitem{CiaHuaZou03}
{\sc P.~{Ciarlet Jr}, J.~Huang, and J.~Zou}, {\em {Some observations on
  generalized saddle-point problems}}, SIAM Journal on Matrix Analysis and
  Applications, 25 (2003), pp.~224--236.

\bibitem{DziEll13}
{\sc G.~Dziuk and C.~M. {Elliott}}, {\em Finite element methods for surface
  {PDE}s}, Acta Numer., 22 (2013), pp.~289--396.

\bibitem{EllFreMil89}
{\sc C.~M. Elliott, D.~A. French, and F.~A. Milner}, {\em A second order
  splitting method for the {C}ahn-{H}illiard equation}, Numerische Mathematik,
  54 (1989), pp.~575--590.

\bibitem{EllFriHob17}
{\sc C.~M. Elliott, H.~Fritz, and G.~Hobbs}, {\em Small deformations of
  {H}elfrich energy minimising surfaces with applications to biomembranes},
  Math. Models Methods Appl. Sci., 27 (2017), pp.~1547--1586.

\bibitem{EllFriHob19}
\leavevmode\vrule height 2pt depth -1.6pt width 23pt, {\em Second order
  splitting for a class of fourth order equations}, Mathematics of Computation,
  88 (2019), pp.~2605--2634.

\bibitem{EllGraHob16}
{\sc C.~M. Elliott, C.~Gr{\"a}ser, G.~Hobbs, R.~Kornhuber, and M.~W. Wolf},
  {\em A variational approach to particles in lipid membranes}, Archive for
  Rational Mechanics and Analysis, 222 (2016), pp.~1011--1075.

\bibitem{EllHatHer19}
{\sc C.~M. Elliott, L.~Hatcher, and P.~J. Herbert}, {\em Small deformations of
  spherical biomembranes}, in 11th Seasonal Institute " The role of metrics in
  the theory of partial differential equations", Mathematical Society of Japan,
  Tokyo, 2019.

\bibitem{ErnGue13}
{\sc A.~Ern and J.-L. Guermond}, {\em Theory and Practice of Finite Elements},
  Applied Mathematical Sciences, Springer Science and\& Business Media, 2013.

\bibitem{GirRav86}
{\sc V.~Girault and P.~A. Raviart}, {\em Finite element meyhods for
  {N}avier-{S}tokes equations}, vol.~5 of Springer Series in Computational
  Mathematics, Springer-Verlag, 1986.

\bibitem{GraKie18}
{\sc C.~Gr{\"a}ser and T.~Kies}, {\em Discretization error estimates for
  penalty formulations of a linearized {C}anham--{H}elfrich-type energy}, IMA
  Journal of Numerical Analysis, 39 (2019), pp.~626--649.

\bibitem{Hel73}
{\sc W.~Helfrich}, {\em Elastic properties of lipid bilayers: theory and
  possible experiments}, Zeitschrift f{\"u}r Naturforschung C, 28 (1973),
  pp.~693--703.

\bibitem{Hob16}
{\sc G.~Hobbs}, {\em Particles and biomembranes: a variational PDE approach},
  PhD thesis, The University of Warwick Mathematics Institute, 2016.

\bibitem{KelLiu96}
{\sc R.~B. Kellogg and B.~Liu}, {\em {A finite element method for the
  compressible Stokes equations}}, SIAM J. Numer. Anal., 33 (1996),
  pp.~780--788.

\bibitem{Sch80}
{\sc A.~H. Schatz}, {\em A weak discrete maximum principle and stability of the
  finite element method in $l_\infty$ on plane polygonal domains. i},
  Mathematics of Computation, 34 (1980), pp.~77--91.

\end{thebibliography}

\end{document}